\numberwithin{equation}{section}
\newtheorem{theorem}{Theorem}[section]
\newtheorem{lemma}[theorem]{Lemma}
\theoremstyle{definition}
\newtheorem{remark}[theorem]{Remark}
\newcommand{\ds}{\displaystyle}
\newcommand{\f}{\frac}
\newcommand{\om}{\Omega}
\newcommand{\p}{\partial}
\title{Optimal control of variable-exponent subdiffusion}
\author{
Yiqun Li\thanks{School of Mathematics and Statistics, Wuhan University, Wuhan 430072, China. Email: YiqunLi24@outlook.com.}
\and Mengmeng Liu\thanks{School of Mathematics, Shandong University, Jinan 250100, China. Email: liumengmeng423@163.com}
\and
Wenlin Qiu\thanks{Corresponding author. School of Mathematics, Shandong University, Jinan 250100, China. Email: wlqiu@sdu.edu.cn.}
}
\date{}
\begin{document}

\maketitle

\begin{abstract}
This work investigates the optimal control of the variable-exponent subdiffusion, which extends the work [Gunzburger and Wang, {\it SIAM J. Control Optim.} 2019] to the variable-exponent case to account for the multiscale and crossover diffusion behavior. To resolve the difficulties caused by the leading variable-exponent operator, we adopt the convolution method to reformulate the model into an equivalent but more tractable form, and then prove the well-posedness and weighted regularity of the optimal control. As the convolution kernels in reformulated models are indefinite-sign, non-positive-definite, and non-monotonic, we adopt the discrete convolution kernel approach in numerical analysis to show the $O(\tau(1+|\ln\tau|)+h^2)$ accuracy of the schemes for state and adjoint equations. Numerical experiments are performed to substantiate the theoretical findings.

\vskip 1mm
\textbf{Keywords:} Optimal control, variable-exponent, subdiffusion, well-posedness, error estimate
\end{abstract}

\section{Introduction}
\subsection{Model problem}
Optimal control  has widespread applications in various fields
 and thus has been  extensively investigated \cite{Delia,Gun,Herzog,ItoKun,KunVex}.
In particular, optimal control of subdiffusion \cite{AntOtaSal,Ant2,JinLiZouSINUM,Mop,WanLi,ZhouGong}
 has attracted increasing attentions since the subdiffusion equation adequately models challenging phenomena such as anomalously diffusive transport through the highly heterogeneous porous media \cite{DieFor,Div,FuZhu,Kop,MetKla}. In complex processes or time-dependent surroundings, there often exists multiscale diffusion or crossover of diffusive regimes, which could be accommodated by introducing the variable exponent in the subdiffusion equation \cite{DuZho,FanHu,GarGiu,Sib,ZheLiQiu}. Thus, we aim to study the corresponding optimal control problem for practical applications.

 Let $u$ be the concentration of the mixed fluid of interest and $c$ be the control variable from the following admissible set
$$U_{ad} :=  \big \{c\in  L^2(0,T;L^2(\om)): \int_\Omega c(\bm x,t)d\bm x\ge 0,~~\forall t \in[0,T]\}.$$
We thus consider the following optimal control model
\begin{equation}\label{ObjFun}
\min_{c \in U_{ad},u} J(u,c)=\frac{1}{2}\|u-u_d\|^2_{L^2(0, T; L^2(\Omega))}+\frac{\kappa}{2}\|c\|^2_{L^2(0, T; L^2(\Omega))},
\end{equation}
where  $u_d$ and $\kappa>0$  represent the target state and the penalty parameter, respectively. The minimization problem \eqref{ObjFun} is
governed by the variable-exponent subdiffusion equation \cite{SunCha,ZheLiQiu}
\begin{equation}\label{VtFDEs}
\begin{array}{c}
\p_t u(\bm x,t)- {}^R\p_t^{1-\alpha(t)} \Delta u (\bm x,t)= q(\bm x,t) + c(\bm x,t)  ,~~(\bm x,t) \in \Omega\times(0,T]; \\ [0.05in]
\ds u(\bm x,0)=0,~\bm x\in \Omega; \quad u(\bm x,t) = 0,~(\bm x,t) \in \p \Omega\times[0,T].
\end{array}\end{equation}
Here
$\Omega \subset \mathbb{R}^d$  is a simply-connected bounded domain with the piecewise smooth boundary $\p \om$ with convex corners, $\bm x := (x_1,\cdots,x_d)$ with $1 \le d \le 3$ denotes the spatial variable, and $q$ refers to the source term.  ${}^R\p_t^{1-\alpha(t)}$ with $0< \alpha(t)<1$ is the variable exponent Riemann--Liouville fractional
differential operator defined as follows ($*$ represents the symbol of convolution)
\cite{LorHar}
\begin{align}\label{var_opr}
&\ds {}^R\p_t^{1-\alpha(t)}   \psi = \p_t I_t^{\alpha(t)} \psi, \quad  I_t^{\alpha(t)} \psi: =k(t)*  \psi(\bm x,t), \quad k(t) : =\frac{t^{ \alpha(t)-1}}{\Gamma(\alpha(t))}. \vspace{-0.2in}
\end{align}

\subsection{Main contributions}
In contrast to the constant-exponent case \cite{Mus}, there are much less investigations on  the variable-exponent subdiffusion and its optimal control. In \cite{Li,ZheWanSICON}, some related variable-exponent time-fractional optimal control problems have been considered where both the variable-exponent time-fractional derivative and the first-order time derivative exist in the model. For this case, the variable-exponent fractional term, though does not have favorable properties as its constant-exponent analogue, is a low-order term such that the perturbation analysis could be applied. In \cite{ZheLiQiu}, the model (\ref{VtFDEs}) with $\alpha(0)=1$ has been considered. For this special case, the properties of model (\ref{VtFDEs}) is close to  its integer-order analogue since the initial singularity of the solutions has been locally eliminated by $\alpha(0)=1$. For the general case $0<\alpha(t)<1$,
 \cite{Zheng} proposed a convolution method to reformulate a Caputo-type variable-exponent subdiffusion
   $ k*\p_t u -\Delta u = f,$
   and then prove its well-posedness by the Laplace transform method under the condition $f \in W^{1,1}(0, T; L^2(\Omega))$.

In this work, we adopt the convolution method proposed in \cite{Zheng} to transform the state and adjoint equations into the equivalent but more tractable formulations, and then apply more subtle estimates to reduce the smoothness condition of the forcing term from $W^{1,1}(0, T; L^2(\Omega))$ in \cite{Zheng} to $ L^2(0, T; L^2(\Omega))$ in the well-posedness proof.
We also derive weighted regularity estimates of the solutions under weaker assumptions on the data than \cite{Zheng}, where the exponent of the weight function is carefully selected according to the feature of optimal control.

For numerical analysis, the widely-used approach such as the induction argument \cite{Stynes} could not be directly applied since the convolution kernels in reformulated models lack desirable properties such as the positive-definiteness and monotonicity.
 In a recent work \cite{QiuZhe}, the discrete convolution kernel method under the graded mesh \cite{LiLiZha} is introduced to prove the first-order temporal accuracy of the numerical scheme to a subdiffusion model with non-positive memory. In the current work, we follow the idea of \cite{QiuZhe} to develop numerical schemes for state and adjoint equations but relax the mesh condition based on the enhanced solution regularity, that is, we prove the almost first-order accuracy (that is, $\tau(1+|\ln\tau|)$-order accuracy) of numerical schemes for the reformulated models under the uniform mesh.

 The rest of the work is organized as follows: In \S \ref{S:Pre}, we derive the first-order optimality condition of the optimal control. In \S \ref{Sect:Aux}, we prove several estimates for an auxiliary equation. In \S \ref{Sect:state}, we analyze the state and adjoint  equations as well as the optimal control.
The numerical scheme for the state and adjoint  equations are developed and  analyzed in \S \ref{S:Num}. Numerical experiments are performed in the last section to simulate the optimal control. Some concluding remarks are given in the last section.

\section{Reformulation of optimal control} \label{S:Pre}

\subsection{Notations and preliminaries}
Let $L^p(\om)$ with $1 \le p \le \infty$ be the Banach space of $p$th power Lebesgue integrable functions on $\om$. For a positive integer $m$,
let  $ W^{m, p}(\Omega)$ be the Sobolev space of $L^p$ functions with $m$th weakly derivatives in $L^p(\om)$ (similarly defined with $\om$ replaced by an interval $\mathcal I$). Let  $H^m(\Omega) := W^{m,2}(\Omega)$ and $H^m_0(\Omega)$ be its subspace with the zero boundary condition up to order $m-1$. For a non-integer $s\geq 0$, $H^s(\Omega)$ is defined via interpolation \cite{AdaFou}. Let $\{\lambda_i,\phi_i\}_{i=1}^\infty$ be eigenpairs of the problem $-\Delta \phi_i = \lambda_i \phi_i$ with the zero boundary condition where $\{\phi_i\}_{i=1}^\infty$ form an orthonormal basis for $L^2(\Omega)$. We introduce the Sobolev space $\check{H}^s(\Omega)$ for $s\geq 0$ by
$ \check{H}^{s}(\Omega) :=  \{ v \in L^2(\Omega): \| v \|_{\check{H}^s}^2 : = \sum_{i=1}^{\infty} \lambda_i^{s} (v,\phi_i)^2 < \infty\}$,
which is a subspace of $H^s(\Omega)$ satisfying $\check{H}^0(\Omega) = L^2(\Omega)$ and $\check{H}^2(\Omega) = H^2(\Omega) \cap H^1_0(\Omega)$ \cite{Jinbook}.
For a Banach space $\mathcal Y$, let $W^{m, p}(0,T; \mathcal Y)$ be the space of functions in $W^{m, p}(0,T)$ with respect to $\|\cdot\|_{\mathcal Y}$.  All spaces are equipped with standard norms \cite{AdaFou,Eva}.
We use $Q$, $Q_i$, and $Q_*$ to denote generic positive constants in which $Q$ may assume different values at different occurrences. We set $\|\cdot\|:=\|\cdot\|_{L^2(\Omega)}$ and $L^p(\mathcal Y)$ for $L^p(0,T;\mathcal Y)$ for brevity, and drop the notation $\om$ in the spaces and norms if no confusion occurs.

Let $E_{p, \bar{p}}(z)$ be the Mittag-Leffler function
$E_{p, \bar{p}}(z):=\sum_{k=0}^{\infty} \frac{z^k}{\Gamma(p k+\bar{p})}$
for $z \in \mathbb{R}$, $p \in \mathbb{R}^{+}$, and $\bar{p} \in \mathbb{R}$.
We refer to some lemmas from \cite{Jinbook} for future use.

\begin{lemma}\label{Lem1}
Let $0<p<2$ and $\bar{p} \in \mathbb{R}$, and   $\pi p / 2<\mu<\min \{\pi, \pi p\}$. Then there exists a constant $C_1=C_1(p, \bar{p} ,\mu)>0$ such that
$\big|E_{p, \bar{p}}(z)\big| \le  C_1 (1+|z|)^{-1}$  for $\mu \leqslant|\arg (z)| \leqslant \pi$.
\end{lemma}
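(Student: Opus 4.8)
The plan is to separate the regions $|z|\le 1$ and $|z|>1$, since on a bounded set the estimate is essentially free while for large $|z|$ it encodes the genuine algebraic decay. Because $E_{p,\bar p}(\cdot)$ is an entire function, it is continuous and hence bounded on the compact disk $\{|z|\le 1\}$, say $|E_{p,\bar p}(z)|\le C_0$ there; as $(1+|z|)^{-1}\ge \tfrac12$ on that disk, this already gives $|E_{p,\bar p}(z)|\le 2C_0\,(1+|z|)^{-1}$. The entire difficulty is therefore to establish a bound of the form $|E_{p,\bar p}(z)|\le Q\,|z|^{-1}$ for $|z|>1$ and $\mu\le|\arg z|\le\pi$; the two ranges then combine with $C_1=\max\{2C_0,\,2Q\}$, using $|z|^{-1}\le 2(1+|z|)^{-1}$ when $|z|>1$.

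For the large-$|z|$ bound I would invoke the Hankel-type contour representation, valid for $0<p<2$. Fix a radius $\epsilon\in(0,1)$ and let $\gamma(\epsilon;\delta)$ be the contour made of the two rays $\arg\zeta=\pm\delta$ with $|\zeta|\ge\epsilon$ together with the arc $|\zeta|=\epsilon$, $|\arg\zeta|\le\delta$, oriented with increasing argument. Starting from Hankel's integral for $1/\Gamma$ and summing the series, then substituting $\zeta=s^{p}$, one obtains
\[
E_{p,\bar p}(z)=\frac{1}{2\pi i\,p}\int_{\gamma(\epsilon;\delta)}\frac{e^{\zeta^{1/p}}\,\zeta^{(1-\bar p)/p}}{\zeta-z}\,d\zeta,\qquad \delta<|\arg z|\le\pi,
\]
with no exponentially growing residue term present precisely because $z$ then lies in the component of $\mathbb{C}\setminus\gamma(\epsilon;\delta)$ containing the origin, i.e. outside the enclosed sector. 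The hypothesis $\pi p/2<\mu<\min\{\pi,\pi p\}$ is exactly what lets me fix $\delta\in(\pi p/2,\mu)$: on each ray $\zeta^{1/p}=r^{1/p}e^{\pm i\delta/p}$ with $\delta/p\in(\pi/2,\pi)$, so $\mathrm{Re}(\zeta^{1/p})=r^{1/p}\cos(\delta/p)<0$ and the integrand decays exponentially in $r=|\zeta|$. Consequently $M:=\frac{1}{p}\int_{\gamma(\epsilon;\delta)}|e^{\zeta^{1/p}}|\,|\zeta|^{(1-\bar p)/p}\,|d\zeta|<\infty$, the ray contributions converging at infinity and the arc being bounded since $|\zeta|\ge\epsilon>0$.

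With this in hand the estimate is short. For $|z|>1$ with $|\arg z|\ge\mu>\delta\ge|\arg\zeta|$ on the contour, the angular separation between $\zeta$ and $z$ is at least $\mu-\delta>0$, so a law-of-cosines argument gives $|\zeta-z|\ge\tfrac12\sin(\mu-\delta)\,(|\zeta|+|z|)\ge\tfrac12\sin(\mu-\delta)\,|z|$, uniformly on $\gamma(\epsilon;\delta)$. Substituting into the representation and pulling out this lower bound yields $|E_{p,\bar p}(z)|\le \frac{M}{\pi\sin(\mu-\delta)}\,|z|^{-1}$, which is the desired $Q\,|z|^{-1}$ with $Q=Q(p,\bar p,\mu)$ independent of $z$. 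The hard part is not the estimation itself but the careful justification of the contour representation and the deformation that places $z$ in the exterior sector: one must check that the term-by-term summation and the substitution $\zeta=s^{p}$ are legitimate (absolute convergence from the exponential ray decay) and that analytic continuation in $z$ propagates the no-residue formula uniformly over all $z$ with $\mu\le|\arg z|\le\pi$. Since the statement is attributed to \cite{Jinbook}, it suffices to record the contour argument above as the underlying mechanism.
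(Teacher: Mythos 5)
The paper offers no proof of this lemma at all: it is imported verbatim from \cite{Jinbook} (``We refer to some lemmas from \cite{Jinbook} for future use''), so there is nothing internal to compare against. Your contour argument is, in substance, the standard proof behind that citation (Podlubny's Theorem~1.6, reproduced in Jin's book): split off the compact disk $|z|\le 1$ by entirety, represent $E_{p,\bar p}$ for $|z|>1$ by the Hankel-type integral over $\gamma(\epsilon;\delta)$ with $\delta\in(\pi p/2,\mu)$ so that $\operatorname{Re}(\zeta^{1/p})=|\zeta|^{1/p}\cos(\delta/p)<0$ on the rays (note $\delta<\mu<\pi p$ guarantees $\delta/p<\pi$, which you use implicitly), observe that $z$ lies outside the enclosed sector so no residue term appears, and bound the Cauchy kernel from below. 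Your quantitative details check out: the law of cosines actually gives $|\zeta-z|\ge\sin\bigl((\mu-\delta)/2\bigr)\,(|\zeta|+|z|)$, and since $\sin\bigl((\mu-\delta)/2\bigr)\ge\tfrac12\sin(\mu-\delta)$ your stated constant is a valid (slightly weaker) consequence; the wrap-around case $|\arg z-\arg\zeta|>\pi$ still yields a true angle $\ge\mu-\delta$, so the lower bound is uniform as claimed. The only caveats are the ones you yourself flag — justifying the term-by-term Hankel summation and the substitution $\zeta=s^p$, and the analytic continuation in $z$ across the whole sector $\mu\le|\arg z|\le\pi$ — which are routine given the exponential ray decay. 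In short: the proposal is correct and supplies the standard mechanism that the paper delegates to its reference.
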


\begin{lemma}\label{Lem2}
For $p\in (0,1)$  and $\lambda \in \mathbb{R}$, we get $\frac{\mathrm{d} }{\mathrm{~d} t } E_{p, 1}\left(-\lambda t^p\right)=-\lambda t^{p-1} E_{p, p}\left(-\lambda t^p\right)$, $\frac{\mathrm{d}^k}{\mathrm{~d} t^k}\left(t^{\bar{p}-1} E_{p, \bar{p}}\left(-\lambda t^p\right)\right)=t^{\bar{p}-k-1} E_{p, \bar{p}-k}\left(-\lambda t^p\right)$ for $k \in \mathbb N^+$ and $\bar p>0$, as well as $I_t^\epsilon\left(t^{\bar{p}-1} E_{p, \bar{p}}\left(-\lambda t^p\right)\right)=t^{\bar{p}+\epsilon-1} E_{p, \bar{p}+\epsilon}\left(-\lambda t^p\right)$ for $\epsilon > 0$.
\end{lemma}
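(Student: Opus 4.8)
The plan is to prove all three identities directly from the power-series definition $E_{p,\bar p}(z)=\sum_{n=0}^\infty z^n/\Gamma(pn+\bar p)$, exploiting the fact that for $p>0$ this series is entire in $z$, so that after the substitution $z=-\lambda t^p$ and multiplication by $t^{\bar p-1}$ one obtains the single power series
\[
t^{\bar p-1}E_{p,\bar p}(-\lambda t^p)=\sum_{n=0}^\infty \frac{(-\lambda)^n\,t^{pn+\bar p-1}}{\Gamma(pn+\bar p)},
\]
which converges locally uniformly on $(0,\infty)$ (and on $[0,\infty)$ when $\bar p\ge 1$). This series is the common starting point for every claim, and the three identities then reduce to termwise manipulations combined with the Gamma recurrence $\Gamma(z+1)=z\Gamma(z)$ and the Beta integral.

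For the one-step ($k=1$) version of the second identity, I would differentiate the above series term by term. Using $(d/dt)t^{pn+\bar p-1}=(pn+\bar p-1)t^{pn+\bar p-2}$ together with $\Gamma(pn+\bar p)=(pn+\bar p-1)\Gamma(pn+\bar p-1)$ collapses the $n$th term to $(-\lambda)^n t^{pn+\bar p-2}/\Gamma(pn+\bar p-1)$, whose sum is exactly $t^{\bar p-2}E_{p,\bar p-1}(-\lambda t^p)$. Iterating this rule $k$ times — each step lowering both the second index and the exponent by one — yields the general formula by induction on $k$. The first identity is the companion computation with $\bar p=1$: the $n=0$ term is constant, so differentiation makes the sum start at $n=1$, and a reindexing $n\mapsto n+1$ factors out $-\lambda t^{p-1}$ and leaves $E_{p,p}(-\lambda t^p)$.

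For the third identity I would apply $I_t^\epsilon$ term by term, using the elementary power rule $I_t^\epsilon t^{\beta-1}=\frac{\Gamma(\beta)}{\Gamma(\beta+\epsilon)}t^{\beta+\epsilon-1}$, which follows from the Beta integral $\int_0^t(t-s)^{\epsilon-1}s^{\beta-1}\,ds=t^{\beta+\epsilon-1}B(\epsilon,\beta)$ and $B(\epsilon,\beta)=\Gamma(\epsilon)\Gamma(\beta)/\Gamma(\epsilon+\beta)$. With $\beta=pn+\bar p$ the factor $\Gamma(pn+\bar p)$ cancels the denominator, so the $n$th term becomes $(-\lambda)^n t^{pn+\bar p+\epsilon-1}/\Gamma(pn+\bar p+\epsilon)$ and the resummation is precisely $t^{\bar p+\epsilon-1}E_{p,\bar p+\epsilon}(-\lambda t^p)$.

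The only genuine subtlety, and the step I expect to require the most care, is justifying the two interchanges of an analytic operation with the infinite sum. For the derivatives it suffices to note that the formally differentiated series is again of Mittag-Leffler type and hence converges locally uniformly on $(0,\infty)$, which legitimizes term-by-term differentiation; for the fractional integral the interchange of $\int_0^t$ with $\sum_n$ is justified by absolute convergence via Fubini--Tonelli, since $\sum_n |\lambda|^n\int_0^t(t-s)^{\epsilon-1}s^{pn+\bar p-1}\,ds<\infty$ on every finite interval. Once these convergence points are settled, all three identities follow from the purely algebraic termwise computations above.
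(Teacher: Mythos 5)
Your proof is correct, and since the paper offers no proof of this lemma at all---it is quoted directly from the reference \cite{Jinbook}, where the identities are established by exactly the term-by-term power-series manipulations you describe---your argument matches the standard one, including the locally uniform convergence and Fubini--Tonelli justifications for the interchanges. One small refinement: when iterating the derivative rule, $\bar p - k$ may hit nonpositive values where $pn+\bar p-k$ is a nonpositive integer and $\Gamma$ has a pole, so the Gamma recurrence is best invoked in the reciprocal form $1/\Gamma(z)=z/\Gamma(z+1)$, which is valid for all $z$ because $1/\Gamma$ is entire and automatically accounts for the vanishing of the corresponding terms in the series for $E_{p,\bar p-k}$.
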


Throughout this work, we assume  $0 <\alpha( t)   <1$ and $|\alpha^{\prime}(t)|   \le Q_*$ on $[0, T]$.

\subsection{First-order optimality condition} We formally derive the first-order optimality condition for the optimal control \eqref{ObjFun}--\eqref{VtFDEs} in the following theorem. The well-posedness of state and adjoint equations will be proved later such that the derivations of the first-order optimality condition can be justified.
\begin{theorem}\label{thm:OptCond}
Suppose $q,c\in L^2(L^2)$,
there exists an adjoint state $z$ such that $(u,c,z)$ satisfies the state equation (\ref{VtFDEs}) and the adjoint equation
\begin{equation}\label{AdjEq}\begin{array}{c}
-\p_t z-  {}^c\hat\p_t^{1-\alpha(t)} \Delta z   = u(\bm x,t;q)-u_d(\bm x,t), ~(\bm x,t) \in \Omega\times[0,T); \\ [0.05in]
\ds z(\bm x,T)=0, ~ \bm x\in \Omega; \quad z(\bm x,t) = 0, ~(\bm x,t) \in \p \Omega\times[0,T]
\end{array}\end{equation}
with the variational inequality
\begin{equation}\label{VarIneq}
\int_0^T\int_\Omega(\kappa c+z)(v-c)d\bm x dt\geq 0, \qquad \forall  v \in U_{ad}.
\end{equation}
Here the backward variable-exponent Caputo differential operator ${}^c \hat\p_t^{1-\alpha(t)}$ is the adjoint operator of ${}^R\p_t^{1-\alpha(t)} $ defined as
\begin{equation}\label{VtInt}
 {}^c\hat\p_t^{1-\alpha(t)}\psi:= -{}_tI_T^{\alpha(t)} \p_t \psi, \quad {}_tI_T^{\alpha(t)} \psi(t): = \int_t^T \ds \frac{\psi( s)}{\Gamma(\alpha(s-t))(s-t)^{1-\alpha( s-t)}}ds.
\end{equation}

\end{theorem}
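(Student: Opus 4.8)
The plan is to exploit the linearity of the state equation \eqref{VtFDEs} in $(u,c)$ and the convexity of $J$ over the closed convex set $U_{ad}$, so that the first-order condition reduces to the standard variational inequality for the reduced functional, with the adjoint state introduced to represent the reduced gradient. Throughout, the derivation is formal, the regularity needed to justify each manipulation being supplied by the well-posedness results established later.

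Fixing $q$, the control-to-state map defined by \eqref{VtFDEs} is affine: writing $u = u(\cdot;q) + Sc$, where $S$ is the bounded linear operator sending a source to the solution of \eqref{VtFDEs} with zero source and zero data, the reduced objective becomes
\[
\hat J(c) = \f12 \|u(\cdot;q) + Sc - u_d\|_{L^2(L^2)}^2 + \f{\kappa}{2}\|c\|_{L^2(L^2)}^2,
\]
which is strictly convex and G\^ateaux differentiable. Its derivative in a direction $h := v - c$ with $v\in U_{ad}$ reads
\[
\hat J'(c)h = (u - u_d, Sh)_{L^2(L^2)} + \kappa (c, h)_{L^2(L^2)},
\]
where $y := Sh$ solves $\p_t y - {}^R\p_t^{1-\alpha(t)}\Delta y = h$ with $y(\cdot,0)=0$ and homogeneous boundary data. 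Since $U_{ad}$ is convex and closed and $\hat J$ is convex, the minimizer is characterized by $\hat J'(c)(v-c)\ge 0$ for all $v\in U_{ad}$, so it remains to identify $(u-u_d, Sh)_{L^2(L^2)}$ with $(z,h)_{L^2(L^2)}$ for the adjoint state $z$ defined by \eqref{AdjEq}.

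This identification, which I expect to be the main obstacle, amounts to showing that ${}^c\hat\p_t^{1-\alpha(t)}$ acts as the genuine $L^2(0,T)$-adjoint of ${}^R\p_t^{1-\alpha(t)}$ despite the variable exponent. I would substitute $u-u_d = -\p_t z - {}^c\hat\p_t^{1-\alpha(t)}\Delta z$ into $(u-u_d,y)_{L^2(L^2)}$ and treat the two resulting terms separately. The $-\p_t z$ term is integrated by parts in time, the boundary contributions vanishing precisely because $y(\cdot,0)=0$ and $z(\cdot,T)=0$, leaving $\int_0^T(z,\p_t y)\,dt$. For the fractional term I would insert the definition \eqref{VtInt}, apply Fubini over the triangle $\{0\le t\le s\le T\}$ to move the backward kernel onto the forward convolution $k*y$, and integrate by parts once more in $s$; the boundary terms vanish since $(k*y)(\cdot,0)=0$ and $\Delta z(\cdot,T)=0$, producing $\int_0^T ({}^R\p_t^{1-\alpha(t)}y,\Delta z)\,dt$. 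Here the variable exponent enters only through the matched kernel $(s-t)^{\alpha(s-t)-1}/\Gamma(\alpha(s-t))$, so no positivity, monotonicity, or semigroup property of $k$ is required; the delicate point is only the rigorous justification of the Fubini exchange and the integrations by parts, which the assumed regularity renders legitimate.

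Combining these two steps with the self-adjointness of $-\Delta$ under homogeneous Dirichlet conditions — the temporal and spatial operators commuting since they act on different variables — yields
\[
(u-u_d, y)_{L^2(L^2)} = \int_0^T\!\!\int_\Omega z\,\big(\p_t y - {}^R\p_t^{1-\alpha(t)}\Delta y\big)\,d\xx\,dt = (z, h)_{L^2(L^2)},
\]
the last equality using that $y=Sh$ solves the linearized state equation. Substituting back gives $\hat J'(c)h = (\kappa c + z, h)_{L^2(L^2)}$, whence the optimality inequality $\hat J'(c)(v-c)\ge 0$ for all $v\in U_{ad}$ is exactly \eqref{VarIneq}, completing the derivation.
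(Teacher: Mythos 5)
Your proposal is correct and follows essentially the same route as the paper: the paper reduces the derivation to the standard convex reduced-functional/variational-inequality framework (cited from [ZheWanSICON, Theorem 2.1]), and its one displayed computation is precisely your key step — Fubini over the triangle $\{0\le t\le s\le T\}$ followed by integration by parts in $s$, with the boundary terms killed by $z(\cdot,T)=0$ and the vanishing of the forward convolution at $t=0$ — establishing the adjoint identity $\big({}^c\hat\p_t^{1-\alpha(t)}\Delta z,\,y\big)_{L^2(L^2)}=\big(z,\,{}^R\p_t^{1-\alpha(t)}\Delta y\big)_{L^2(L^2)}$. You merely fill in the affine control-to-state and G\^ateaux-derivative details that the paper omits by citation, so there is no substantive difference.
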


\begin{proof}
The proof of \eqref{AdjEq}--\eqref{VarIneq} could be carried out by   following a similar procedure of \cite[Theorem 2.1]{ZheWanSICON}  and   the adjoint properties of ${}^c\hat \p_t^{1-\alpha(  t)}\Delta$ and ${}^R  \p_t^{1-\alpha( t)}\Delta$, that is,
\begin{equation*}\label{Jv:e4}\begin{array}{ll}
& \hspace{-0.1in} \ds \ds \int_{\Omega}\int_0^T   u \,{}^c\hat\p_t^{1-\alpha( t)} \Delta z   dt = -\int_\Omega\int_0^T u(\bm x, t)\bigg[\int_t^T \ds \frac{\p_s \Delta z(\bm x, s)}{\Gamma(\alpha(s-t))(s-t)^{1-\alpha( s-t)}}ds \bigg]dt d \bm x\\[0.1in]
&\hspace{-0.1in} \ds\!=\! -\int_{\Omega}\int_0^T \p_s z(\bm x, s) \bigg[\int_0^s \frac{\Delta u(\bm x, t)}{\Gamma(\alpha(s-t))(s-t)^{1-\alpha( s-t)}} dt\bigg] dsd \bm x\\[0.1in]
&\hspace{-0.1in} \ds \!=  \!\int_{\Omega}\int_0^T  z(\bm x, s)\bigg[ \p_s  \int_0^s \frac{\Delta u(\bm x, t)}{\Gamma(\alpha(s-t))(s-t)^{1-\alpha( s-t)}} dt\bigg] dsd \bm x\!=\!\int_{\Omega}\int_0^T  z\, {}^R\p_s^{1-\alpha(s)} \Delta u dsd \bm x,
\end{array}\end{equation*}
and the detailed proof is   omitted due to similarity.
\end{proof}

\begin{remark}
 The variational inequality \eqref{VarIneq} could be interpreted as follows \cite{ZheWanSICON}
\begin{equation}\label{VarC}
\kappa c(\bm x,t) = \max \bigg\{0,\frac{1}{|\Omega|}\int_\Omega z(\bm x,t) d\bm x \bigg \} - z(\bm x,t).
\end{equation}
\end{remark}

\subsection{Equivalent reformulation}\label{Sect:Equiv}
For the sake of analysis, we introduce a generalized identity function, which is the convolution of the variable-exponent kernel \eqref{var_opr} and the Abel kernel $ \beta_{1-\alpha_0}(t) : = \f{t^{-\alpha_0}}{\Gamma{(1-\alpha_0)}}$ with $\alpha_0 = \alpha(0)$
\begin{align}
\hspace{-0.175in} g (t) \!: =\! \left(\beta_{1-\alpha_0} * k\right)(t) & \!=\!\int_0^t \frac{(t-s)^{-\alpha_0}}{\Gamma(1-\alpha_0)} \frac{s^{\alpha(s)-1}}{\Gamma(\alpha(s))} d s \!=\!\int_0^1 \frac{(t-t z)^{-\alpha_0}}{\Gamma(1-\alpha_0)} \frac{(t z)^{\alpha(t z)-1}}{\Gamma(\alpha(t z))} t d z  \nonumber \\
& \!=\!\int_0^1 \frac{(t z)^{\alpha(t z)-\alpha_0}}{\Gamma(1-\alpha_0) \Gamma(\alpha(t z))}(1-z)^{-\alpha_0} z^{\alpha_0-1} d z,  \vspace{-0.1in}\label{g}
\end{align}
where we apply the variable substitution $z = s/t$ on the right-hand
side of the second equality.  It is shown in \cite{Zheng} that
\begin{equation}\label{g:est}
\ds g(0)=1, \quad   |g(t)| \leq Q, \quad |g^{\prime}(t) | \leq Q(|\ln t|+1), ~~\mbox{on}~ (0, T].
\end{equation}

Now we intend to propose an equivalent formulation of (\ref{VtFDEs}).
\begin{theorem}\label{equi01}
Suppose $u  \in H^1(L^2)\cap L^2(\check H^2)$. Then $u$ solves the state equation \eqref{VtFDEs} if and only if $u$ solves the following system
\begin{equation}\label{Model}\begin{array}{l}
\hspace{-0.1in}\ds ^c\p_t^{\alpha_0} u(\bm x,t)\!-\!  \Delta u (\bm x,t) \!-\! g^\prime * \Delta u (\bm x,t) \!=\! I_t^{1-\alpha_0} (q+c)(\bm x,t),~~(\bm x,t) \in \Omega\times(0,T]; \\ [0.05in]
\ds u(\bm x,0)=0,~\bm x\in \Omega; \quad u(\bm x,t) = 0,~(\bm x,t) \in \p \Omega\times[0,T].
\end{array}\end{equation}
\end{theorem}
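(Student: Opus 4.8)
The plan is to obtain (\ref{Model}) from (\ref{VtFDEs}) by convolving the state equation in time with the Abel kernel $\beta_{1-\alpha_0}$ and invoking the defining relation $g=\beta_{1-\alpha_0}*k$ from (\ref{g}), and then to reverse the argument to recover (\ref{VtFDEs}) from (\ref{Model}). The only tools needed are the commutativity and associativity of temporal convolution, the differentiation identity $\p_t(\phi*\psi)=\phi(0)\psi+\phi'*\psi$ (applied to whichever factor is differentiable, by symmetry of $*$), the semigroup identity $\beta_a*\beta_b=\beta_{a+b}$, and $\beta_1\equiv 1$.

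For the forward implication, recall ${}^R\p_t^{1-\alpha(t)}\Delta u=\p_t(k*\Delta u)$, so that (\ref{VtFDEs}) reads $\p_t u-\p_t(k*\Delta u)=q+c$. Convolving with $\beta_{1-\alpha_0}$ and treating the terms separately: the first term gives $\beta_{1-\alpha_0}*\p_t u={}^c\p_t^{\alpha_0}u$ by the definition of the Caputo derivative, while the right-hand side gives $\beta_{1-\alpha_0}*(q+c)=I_t^{1-\alpha_0}(q+c)$. For the middle term, since $(k*\Delta u)|_{t=0}=0$ the boundary contribution in the differentiation identity drops out, yielding $\beta_{1-\alpha_0}*\p_t(k*\Delta u)=\p_t(\beta_{1-\alpha_0}*k*\Delta u)=\p_t(g*\Delta u)$; differentiating once more and using $g(0)=1$ from (\ref{g:est}) gives $\p_t(g*\Delta u)=\Delta u+g'*\Delta u$. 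Collecting the three identities produces exactly (\ref{Model}).

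For the converse, I would run the same chain of identities backwards to rewrite (\ref{Model}) as $\beta_{1-\alpha_0}*\big[\p_t u-\p_t(k*\Delta u)-(q+c)\big]=0$, then convolve with $\beta_{\alpha_0}$ and use $\beta_{\alpha_0}*\beta_{1-\alpha_0}=\beta_1\equiv 1$ together with $\p_t(1*F)=F$ to conclude that the bracketed quantity vanishes, which is precisely (\ref{VtFDEs}). The homogeneous Dirichlet boundary condition and the initial value $u(\bm x,0)$ are untouched by convolution in $t$ and therefore transfer directly between the two formulations.

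The main obstacle is not the algebra but its rigorous justification under the stated regularity $u\in H^1(L^2)\cap L^2(\check H^2)$, since $k$, $\beta_{1-\alpha_0}$, and $\beta_{\alpha_0}$ are merely $L^1_{\mathrm{loc}}$ with an integrable singularity at the origin. Concretely, I expect to verify via Fubini that $(k*\Delta u)(0)=0$ (using $\Delta u\in L^2(L^2)$ and $k\in L^1(0,T)$), that the interchange of $\p_t$ with the singular convolutions in the differentiation identity is legitimate, and that $g'*\Delta u$ is a well-defined element of $L^2(L^2)$; the last point relies on the bound $|g'(t)|\le Q(|\ln t|+1)$ from (\ref{g:est}), which keeps $g'\in L^1(0,T)$ despite its logarithmic blow-up. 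The inversion step likewise requires the associativity $\beta_{\alpha_0}*(\beta_{1-\alpha_0}*F)=(\beta_{\alpha_0}*\beta_{1-\alpha_0})*F$ and the implication $1*F\equiv 0\Rightarrow F=0$, a Titchmarsh/Abel-type argument valid for $F\in L^1(0,T;L^2)$.
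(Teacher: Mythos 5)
Your proposal is correct and follows essentially the same route as the paper's own proof: convolving the state equation with $\beta_{1-\alpha_0}$, using $(k*\Delta u)|_{t=0}=0$ and $g(0)=1$ to get $\beta_{1-\alpha_0}*\p_t(k*\Delta u)=\p_t(g*\Delta u)=\Delta u+g'*\Delta u$, and inverting via $\beta_{\alpha_0}*\beta_{1-\alpha_0}=\beta_1\equiv 1$ followed by differentiation. Your added remarks on justifying the singular-kernel manipulations (Fubini, $g'\in L^1(0,T)$ from \eqref{g:est}, and the $1*F\equiv 0\Rightarrow F=0$ step) are sound refinements that the paper leaves implicit, but they do not change the argument.
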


\begin{proof}
We first prove the problem \eqref{VtFDEs} could be reformulated to \eqref{Model}.
We calculate the convolution of \eqref{VtFDEs} and $\beta_{1-\alpha_0}$ as
$\beta_{1-\alpha_0} *\big[\p_t u -  \p_t (k * \Delta   u)- (q+c) \big]=0$.
Since $u  \in H^1(L^2)\cap L^2(\check H^2)$, we obtain $u \in C(H^1)$ \cite{Eva}. We incorporate this with \eqref{var_opr} and  H\"older's inequality to obtain   $(k*u)\big|_{t=0} = 0$, which further gives
$$(k * \Delta  u)\big|_{t=0} =  \Delta (k *   u)\big|_{t=0} =0.$$
We combine this with \eqref{g} to obtain
\begin{equation}\label{thm1:e2}
\beta_{1-\alpha_0} * \big(\p_t (k * \Delta   u)\big) = \p_t (\beta_{1-\alpha_0} *k * \Delta   u) =  \p_t (g * \Delta   u) = \Delta u + g^{\prime} * \Delta   u.
\end{equation}
We incorporate this with \eqref{var_opr}  to get \eqref{Model}.

Conversely, suppose $u$ is the solution to \eqref{Model}. By \eqref{thm1:e2}, we compute the convolution of  \eqref{Model} and $\beta_{\alpha_0}$, and then differentiate the resulting equation  to obtain \vspace{-0.1in}
\begin{equation*}\begin{array}{cl}
\hspace{-0.1in} \ds 0 \!=\! \p_t \big[(\beta_{\alpha_0}* \beta_{1-\alpha_0}) *\big(\p_t u \! -\!   \p_t (k * \Delta   u)\! -\!  (q+c) \big)\big] & \ds\! = \! \p_t \big[1 *\big(\p_t u -  \p_t (k * \Delta   u)- (q+c)\big)\big], \vspace{-0.1in}
\end{array}
\end{equation*}
which recovers the state equation \eqref{VtFDEs}, and thus completes the proof.
\end{proof}

To facilitate analysis the adjoint problem \eqref{AdjEq}, we apply the variable substitution $\bar t = T-t$  to obtain a forward-in-time analogue of \eqref{VtInt}
\begin{equation}\label{AdjCap}\begin{array}{ll}
\hspace{-0.2in}\ds {}^c\hat\p_t^{1-\alpha(t)}\Delta z &\ds \!=\!  \!- \!\int_t^T \ds \frac{\p_s \Delta z(\bm x, s)ds}{\Gamma(\alpha(s-t))(s-t)^{1-\alpha( s-t)}} \!= \!  \int_0^{\bar t} \ds \frac{\p_{y} \Delta \bar z(\bm x, y)d y}{\Gamma(\alpha(\bar t- y))(\bar t - y)^{1-\alpha(\bar t- y)}} \\[0.15in]
&\hspace{-0.125in}\ds \ds =I_{\bar t}^{\alpha(\bar t)} \p_{\bar t} \Delta \bar z =: {}^c\bar\p_{\bar t}^{1-\alpha(\bar t)} \Delta \bar z, \quad \bar z(\bm x, \bar t) : = z (\bm x, T-\bar t).
\end{array}\end{equation}
We invoke this to arrive at
a forward-in-time analogue of the adjoint problem \eqref{AdjEq}
\begin{equation}\label{AdjEq1}\begin{array}{c}
\p_{\bar t} \bar z-  {}^c\bar\p_{\bar t}^{1-\alpha(\bar t)} \Delta \bar z   = \bar u(\bm x, \bar t)- \bar {u}_d (\bm x,\bar t), ~(\bm x, \bar t) \in \Omega\times(0,T]; \\ [0.05in]
\ds \bar z(\bm x,0)=0, ~ \bm x\in \Omega; \quad \bar z(\bm x, \bar t) = 0, ~(\bm x, \bar t) \in \p \Omega\times[0,T].
\end{array}\end{equation}
Here  $\bar u$  and $ \bar {u}_d$ could be  defined in an analogous manner as $\bar z$ in \eqref{AdjCap}.

\begin{theorem}\label{Ref:Adj}
Suppose $\bar z  \in H^1 (L^2)\cap L^2(\check H^2)$. If $\bar z$ solves the  model \eqref{AdjEq1}, then $\bar z$ solves the following system
\begin{equation}\label{AdjEq2}\begin{array}{l}
\hspace{-0.175in}\ds ^c\p_{\bar t}^{\alpha_0} \bar z(\bm x,\bar t)\!-\!  \Delta \bar z (\bm x,\bar t) \!-\! g^\prime * \Delta \bar z (\bm x,\bar t) \!=\! I_{\bar t}^{1-\alpha_0} (\bar u -\bar u_d)(\bm x,\bar t),~(\bm x,\bar t) \in \Omega\times(0,T]; \\ [0.05in]
\ds~~~~~~~~~ \bar z(\bm x,0)= 0,~\bm x\in \Omega; \quad \bar z(\bm x, \bar  t) = 0,~(\bm x, \bar t) \in \p \Omega\times[0,T].
\end{array}\end{equation}
 Conversely, if $\bar z$ solves \eqref{AdjEq2}, then $\bar z$ solves the  model  \eqref{AdjEq1}.
\end{theorem}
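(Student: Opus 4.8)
The plan is to recognize that the forward-in-time adjoint problem \eqref{AdjEq1} is structurally identical to the state equation \eqref{VtFDEs}, so that the reformulation mirrors the proof of Theorem \ref{equi01} after a single preliminary identification of the leading operators. The one genuinely new ingredient is reconciling the operator orderings: by \eqref{AdjCap} the adjoint operator satisfies ${}^c\bar\p_{\bar t}^{1-\alpha(\bar t)}\Delta\bar z = I_{\bar t}^{\alpha(\bar t)}\p_{\bar t}\Delta\bar z = k*\p_{\bar t}\Delta\bar z$, i.e. the kernel $k$ sits to the left of the time derivative (a Caputo ordering), whereas in \eqref{VtFDEs} the leading term is ${}^R\p_t^{1-\alpha(t)}\Delta u = \p_t(k*\Delta u)$ with the derivative outside (a Riemann--Liouville ordering). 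First I would exploit the zero initial and boundary data, which give $\Delta\bar z(\bm x,0)=0$, so that $\p_{\bar t}(k*\Delta\bar z)=k*\p_{\bar t}\Delta\bar z+k(\bar t)\,\Delta\bar z(\bm x,0)=k*\p_{\bar t}\Delta\bar z$. Hence \eqref{AdjEq1} may be rewritten as $\p_{\bar t}\bar z-\p_{\bar t}(k*\Delta\bar z)=\bar u-\bar u_d$, which is exactly the form of \eqref{VtFDEs} under the correspondence $u\leftrightarrow\bar z$, $q+c\leftrightarrow\bar u-\bar u_d$.

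Once this identification is in place, the forward direction is a transcription of the proof of Theorem \ref{equi01}. I would convolve the rewritten \eqref{AdjEq1} with the Abel kernel $\beta_{1-\alpha_0}$ and invoke the analogue of \eqref{thm1:e2} with $\bar z$ in place of $u$, namely $\beta_{1-\alpha_0}*\p_{\bar t}(k*\Delta\bar z)=\p_{\bar t}(g*\Delta\bar z)=\Delta\bar z+g'*\Delta\bar z$, where the middle step uses $g=\beta_{1-\alpha_0}*k$ from \eqref{g} and $(k*\Delta\bar z)|_{\bar t=0}=0$, while the last step uses $g(0)=1$ from \eqref{g:est}. Identifying $\beta_{1-\alpha_0}*\p_{\bar t}\bar z={}^c\p_{\bar t}^{\alpha_0}\bar z$ (again via $\bar z(\bm x,0)=0$) and $\beta_{1-\alpha_0}*(\bar u-\bar u_d)=I_{\bar t}^{1-\alpha_0}(\bar u-\bar u_d)$ then yields \eqref{AdjEq2} directly.

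For the converse I would reverse these steps exactly as in Theorem \ref{equi01}: convolve \eqref{AdjEq2} with $\beta_{\alpha_0}$, differentiate once in $\bar t$, and use the Abel semigroup identity $\beta_{\alpha_0}*\beta_{1-\alpha_0}=1$ to cancel the fractional integration. This recovers $\p_{\bar t}\bar z-\p_{\bar t}(k*\Delta\bar z)=\bar u-\bar u_d$, which returns to \eqref{AdjEq1} after translating back to the Caputo ordering via the zero initial data, completing the equivalence.

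The step I expect to be the main (though mild) obstacle is the rigorous justification of the convolution manipulations: interchanging $\p_{\bar t}$ with the weakly singular convolutions against $k$, $\beta_{1-\alpha_0}$, and $g'$, and the vanishing of the attendant boundary terms. These are legitimate precisely under the stated regularity $\bar z\in H^1(L^2)\cap L^2(\check H^2)$, which ensures $\p_{\bar t}\bar z,\,\Delta\bar z\in L^2(L^2)$ so that all convolutions are well defined by Young's inequality given the integrability of the kernels on $(0,T]$, and the initial trace $\Delta\bar z(\bm x,0)=0$ is attained. Apart from this bookkeeping, the result is a direct parallel of Theorem \ref{equi01} and introduces no essentially new analysis.
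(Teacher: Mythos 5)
Your proposal is correct and takes essentially the same approach the paper intends: the paper omits the proof of Theorem \ref{Ref:Adj} as a repetition of Theorem \ref{equi01}, i.e., convolve with $\beta_{1-\alpha_0}$, use $g=\beta_{1-\alpha_0}*k$ with $g(0)=1$ to produce the $\Delta\bar z + g'*\Delta\bar z$ terms, and invert via $\beta_{\alpha_0}*\beta_{1-\alpha_0}=1$ followed by differentiation. Your preliminary reconciliation of orderings --- rewriting ${}^c\bar\p_{\bar t}^{1-\alpha(\bar t)}\Delta \bar z = k*\p_{\bar t}\Delta\bar z$ as $\p_{\bar t}(k*\Delta\bar z)$ via the vanishing initial data --- is precisely the implicit adjustment on which the paper's ``similar'' proof rests, so nothing is missing.
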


\begin{proof}
The proof is similar to that of Theorem \ref{equi01} and is thus omitted.
\end{proof}
\section{Analysis of auxiliary equation}\label{Sect:Aux}

We  analyze the well-posedness and  regularity estimates of the solutions to an auxiliary differential equation
\begin{equation}\begin{array}{l}\label{ode}
^c\p_t^{\alpha_0}  v + \lambda v + \lambda g^\prime *  v = I_t^{1-\alpha_0} f,\quad t \in (0, T], \quad v(0)=0,
  \end{array}
 \end{equation}
where $ ^c\p_t^{\alpha_0} $ is the Caputo fractional  derivative operator defined as
$^c\p_t^{\alpha_0 } v : = \beta_{1-\alpha_0}*  v^\prime( t)$ \cite{Jinbook} and $f$ is a forcing term.

\begin{theorem}\label{thm:ode}
Suppose  $f \in L^2(0,T)$, then the auxiliary differential equation \eqref{ode} admits a unique solution in $H^1(0,T)$ such that
\begin{equation}\label{odestab}\begin{array}{cl}
&\hspace{-0.1in} \ds  \| v\|_{H^1(0,T)} \leq Q  \|f\|_{L^2(0, T)}.
\end{array}
\end{equation}
 Suppose $f \in H^1(0, T)$, we have
 \begin{equation}\label{thm2:est}\begin{array}{cl}
  \ds  \big\| t^{\vartheta} v^{\prime \prime} \big\|_{L^2(0,T)} \leq Q ( \|f\|_{H^1(0, T)} + \lambda^{1-\gamma} |f(0)|)
 \end{array}
 \end{equation}
with\vspace{-0.1in}
\begin{equation}\label{theta}
\gamma \in \Big(\f{3}{4}, 1\Big),~~\vartheta \in  \Big( \vartheta _*, \f{1}{2}\Big), \,\, \vartheta_* : = \max \Big\{\f{1}{2} -\alpha_0(1-\gamma), \alpha_0 -\f{1}{2}\Big\} \in \Big(\f{3}{10}, \f{1}{2}\Big).
\end{equation}
\end{theorem}
\begin{proof}
To prove the well-posedness, we define the space $\mathcal{X}:=\{g \in H^1(0, T): g(0)=0\}$ equipped with the equivalent norm  $\|g\|_{\mathcal{X}, \sigma}:=$ $ \|e^{-\sigma t} \p _t g \|_{L^2(0, T)}$ for some $\sigma \ge 0$.  For each $w \in \mathcal{X}$, let $v : = \mathcal{M} v$ be the solution of
\begin{equation*}\label{ode:e1}
 ^c\p_t^{\alpha_0}  v + \lambda v  = -\lambda g^\prime *  w +  I_t^{1-\alpha_0} f,
\end{equation*}
 which could be expressed as
$ \ds  v=(t^{\alpha_0-1} E_{\alpha_0, \alpha_0}(-\lambda  t^{\alpha_0})) *(-\lambda g^{\prime} * w +  I_t^{1-\alpha_0} f)$ \cite{Jinbook},
where the last term could be further reformulated  as
$\big[(t^{\alpha_0-1} E_{\alpha_0, \alpha_0}(-\lambda  t^{\alpha_0}))  *  \beta_{1-\alpha_0}\big] * f = E_{\alpha_0, 1}(-\lambda  t^{\alpha_0})* f$.
We incorporate this with Lemma \ref{Lem2} to differentiate the solution expression to obtain
\begin{equation}\begin{array}{l}\label{diff:sol}
 \hspace{-0.1in} \ds  v^{\prime}= -\lambda  (t^{\alpha_0-1} E_{\alpha_0, \alpha_0}(-\lambda  t^{\alpha_0})) *( g^{\prime} * w^{\prime})  + f -\lambda (t^{\alpha_0-1} E_{\alpha_0, \alpha_0}(-\lambda  t^{\alpha_0})) * f.
  \end{array}
\end{equation}
To bound  the first right-hand side term, we invoke the complete monotonicity property of $E_{\alpha_0, \alpha_0}(-\lambda  t^{\alpha_0})$ \cite{Jinbook}, which implies that
$E_{\alpha_0, \alpha_0}(-\lambda  t^{\alpha_0})) \ge 0$ for $t \in [0, T]$, and Lemma \ref{Lem2} and  \eqref{g:est}  to obtain for  $0 < \varepsilon \ll 1$
\begin{equation}\begin{array}{ll}\label{ode:I3}
 \ds \big|\lambda  (t^{\alpha_0-1} \ds E_{\alpha_0, \alpha_0}(-\lambda  t^{\alpha_0}))   *( g^{\prime} * w^{\prime})\big|
 \le Q \lambda I_t^{1-\varepsilon} \big(t^{\alpha_0-1} E_{\alpha_0, \alpha_0}(-\lambda  t^{\alpha_0})\big) *|w^{\prime}|\\[0.1in]
\ds  \le Q \lambda \big(t^{\alpha_0-\varepsilon} E_{\alpha_0, \alpha_0+1-\varepsilon}(-\lambda  t^{\alpha_0}) \big)*|w^{\prime}|\le Q t^{-\varepsilon} *|w^{\prime} |,
  \end{array}
\end{equation}
where we have used the asymptotic property of the Mittag-Leffler function from Lemma \ref{Lem1} such that
$$
\big|\lambda t^{\alpha_0} E_{\alpha_0, \alpha_0+1-\varepsilon}(-\lambda  t^{\alpha_0})\big|\leq Q
$$
holds for some positive constant $Q$ independent from $t$ and $\lambda$.
We again invoke $E_{\alpha_0, \alpha_0}(-\lambda  t^{\alpha_0})) \ge 0$ for $t \in [0, T]$ to bound the last term on the right-hand side of \eqref{diff:sol} and combine the resulting estimate with \eqref{ode:I3}   to bound
\begin{equation}\begin{array}{l}\label{diff:est}
 \hspace{-0.1in} \ds  |v^{\prime}| \le Q\big(t^{-\varepsilon}* |w^{\prime}|+ |f | + \lambda(t^{\alpha_0-1} E_{\alpha_0, \alpha_0}(-\lambda  t^{\alpha_0})) * |f|\big).
  \end{array}
\end{equation}
We combine the estimate
$\int_{0}^{T} e^{-\sigma t} t^{-\mu} d t \le \sigma^{\mu-1} \Gamma(1-\mu)$ for $\mu < 1$
 to bound
\begin{equation}\begin{array}{ll}\label{ode:e2}
\ds  \hspace{-0.125in}  \bigg\|e^{-\sigma t}  \int_0^t \f{|w^{\prime}(s)|ds }{(t-s)^{\varepsilon}}\bigg\|_{L^2(0, T)}  = \big\|(e^{-\sigma t} t^{-\varepsilon}) * (e^{-\sigma t} w^{\prime})\big\|_{L^2(0, T)}
\le \ds Q \sigma^{\varepsilon-1} \|w \|_{\mathcal{X}, \sigma}.
\end{array}
\end{equation}
In addition, we combine  Young's inequality with Lemmas \ref{Lem1}--\ref{Lem2}  to bound the last term on the right-hand side of \eqref{diff:est} as follows
\begin{equation*}\begin{array}{l}
\ds \lambda\big\|(t^{\alpha_0-1} E_{\alpha_0, \alpha_0}(-\lambda  t^{\alpha_0})) * |f| \big\|_{L^2(0,T)}  \le \lambda \big\|t^{\alpha_0-1} E_{\alpha_0, \alpha_0}(-\lambda  t^{\alpha_0})\big \|_{L^1(0, T)}  \|f\|_{L^2(0,T)} \\[0.1in]
\ds \le \lambda T^{\alpha_0}E_{\alpha_0, \alpha_0+1}(-\lambda  T^{\alpha_0})\|f\|_{L^2(0,T)} \le   \|f\|_{L^2(0,T)}.
\end{array}
\end{equation*}
We combine \eqref{diff:est} with the above two  estimates  to obtain
$ \ds  \|v\|_{\mathcal{X}, \sigma} \le Q\big(  \|f\|_{L^2(0, T)}  + \sigma^{\varepsilon-1}  \|w \|_{\mathcal{X}, \sigma} \big)$,
and thus the mapping $\mathcal M : \mathcal{X} \rightarrow \mathcal{X}$ is well-defined.
To prove its contractivity, let $w_1, w_2 \in \mathcal{X}$ with $v_1:=\mathcal{M} w_1$ and $v_2:=\mathcal{M} w_2$. Then $e_v:=v_1-v_2$ and $e_w:=$ $w_1-w_2$ satisfy the homogeneous analogue of the original equation. A direct application of  the above estimate implies  $\left\|e_v\right\|_{\mathcal{X}, \sigma} \leq Q  \sigma^{\varepsilon-1}  \left\|e_w\right\|_{\mathcal{X}, \sigma}$. A sufficiently large $\sigma$ ensures that the mapping $\mathcal{M}$ is a contraction. Thus, the \eqref{ode} admits a unique solution $v \in \mathcal{X}$ with the estimate
$
\|v\|_{\mathcal{X}, \sigma} \leq Q \|f\|_{L^2(0, T)}$,
 which proves the first estimate in  \eqref{odestab}.

In addition,  we directly differentiate \eqref{diff:sol} with $w$ replaced by $v$ and invoke  $v^\prime(0) = f(0)$ from \eqref{diff:sol} to obtain
\begin{equation}\begin{array}{l}\label{Cor:e1}
 \hspace{-0.15in} \ds  v^{\prime \prime}= -\lambda  (t^{\alpha_0-1} E_{\alpha_0, \alpha_0}(-\lambda  t^{\alpha_0})) * g^{\prime}f(0) -\lambda  (t^{\alpha_0-1} E_{\alpha_0, \alpha_0}(-\lambda  t^{\alpha_0})) *( g^{\prime} * v^{\prime\prime}) \\
  \ds  + f^\prime -\lambda (t^{\alpha_0-1} E_{\alpha_0, \alpha_0}(-\lambda  t^{\alpha_0}))  f (0)-\lambda (t^{\alpha_0-1} E_{\alpha_0, \alpha_0}(-\lambda  t^{\alpha_0})) * f^\prime = : \sum_{i=1}^5 \hat I_i.\vspace{-0.05in}
  \end{array}
\end{equation}
We combine this with \eqref{g:est}, the estimate below \eqref{ode:e2},  Young's inequality, and the Sobolev embedding $H^1(0, T)  \hookrightarrow C[0,  T]$ to bound $\hat I_1$ and $\hat I_5$ for $0 < \varepsilon \ll 1$ as follows
\begin{equation}\begin{array}{l}\label{Cor:e2}
 \hspace{-0.1in}\ds\|\hat I_1\|_{L^2(0, T)} +\|\hat I_5\|_{L^2(0, T)}
  \\[0.1in]
  \hspace{-0.1in} \ds \! \le \! Q \lambda \big\|t^{\alpha_0-1} E_{\alpha_0, \alpha_0}(-\lambda  t^{\alpha_0})\big \|_{L^1(0, T)} \big( \|t^{-\varepsilon}\|_{L^2(0,T)} |f(0)| \!+\! \|f\|_{H^1}\big)
\! \le \!  Q\|f\|_{H^1(0,T)}.
  \end{array}
\end{equation}
 We  follow the estimate of \eqref{ode:I3} to bound the second term on the right-hand side of \eqref{Cor:e1} as $|\hat I_2 | \le Q  t^{-\varepsilon} *|v^{\prime \prime} |$.
 We then split $t^{\vartheta} = s^{\vartheta} + (t^{\vartheta}-s^{\vartheta})$ and combine the estimate
$|t^{\vartheta}-s^{\vartheta}| \le Q (t-s)^{\vartheta}$
 to obtain
\begin{equation}\begin{array}{ll}\label{ode:e4}
\ds  \hspace{-0.15in}  \bigg\|e^{-\sigma t} t^{\vartheta}\int_0^t \f{|v^{\prime \prime}(s)|ds }{(t-s)^{\varepsilon}}\bigg\|_{L^2(0, T)} &\hspace{-0.1in}\ds \!\le\! \bigg\|\int_0^t \f{e^{-\sigma (t-s)} }{(t-s)^{\varepsilon}}  e^{-\sigma s} s^{\vartheta}  |v^{\prime \prime}(s)|ds \bigg\|_{L^2(0, T)}\\[0.15in]
&\hspace{-0.15in}\ds \ds + \bigg\|e^{-\sigma t} \int_0^t \f{(t-s)^{\vartheta} |v^{\prime \prime}(s)|ds }{(t-s)^{\varepsilon}}\bigg\|_{L^2(0, T)}  =: J_1  + J_2,
\end{array}
\end{equation}
in which $J_1$ could be similarly bounded  as $ J_1 = \big\|(e^{-\sigma t} t^{-\varepsilon}) * (e^{-\sigma t}t^{\vartheta} v^{\prime \prime})\big\|_{L^2(0, T)}
\le \ds Q \sigma^{\varepsilon-1} \|e^{-\sigma t}t^{\vartheta} v^{\prime \prime} \|_{L^2(0, T)}$.
We apply H\"older's inequality and Young's inequality to  bound $J_2$ in \eqref{ode:e4} in an analogous manner as follows
\begin{equation*} \label{ode:J_2}
\begin{array}{l}
\ds  J_2 =  \ds \bigg\|\int_0^t \f{(t-s)^{\vartheta-\f{\varepsilon}{2}}}{ s^{\vartheta}}\f{e^{-\sigma t} s^{\vartheta} |v^{\prime \prime}(s)|}{(t-s)^{\f{\varepsilon}{2}} }  ds \bigg\|_{L^2(0, T)} \le Q \bigg\|\bigg(\int_0^t  \f{e^{-2\sigma t} s^{2\vartheta} |v^{\prime \prime}(s)|^2}{(t-s)^{\varepsilon}}   ds \bigg)^{\f{1}{2}} \bigg\|_{L^2(0, T)}   \\[0.15in]
\qquad  \ds \le Q \bigg\|\int_0^t  \f{e^{-2\sigma t} s^{2\vartheta} |v^{\prime \prime}(s)|^2}{(t-s)^{\varepsilon}}   ds \bigg\|_{L^1(0, T)}^{\f{1}{2}}  \le Q \bigg\|\int_0^t  \f{e^{-2\sigma (t-s)}}{(t-s)^{\varepsilon}}  e^{-2\sigma s} s^{2\vartheta} |v^{\prime \prime}(s)|^2   ds \bigg\|_{L^1(0, T)}^{\f{1}{2}}\\[0.15in]
 \qquad \le  Q\sigma^{-\f{1-\varepsilon} {2}} \|e^{-\sigma t}t^{\vartheta} v^{\prime \prime} \|_{L^2(0, T)}.
  \end{array}
\end{equation*}
We  invoke Lemma \ref{Lem1}  and $ \f{3}{4} < \gamma <1 $ to bound $\hat I_4$ in \eqref{Cor:e1} as follows
\begin{equation}\begin{array}{l}\label{Cor:e3}
 \hspace{-0.175in}\ds|\hat I_4| =   \big(\lambda^{1-\gamma } t^{\alpha_0(1-\gamma)-1} \big)(\lambda t^{\alpha_0}) ^{\gamma}  E_{\alpha_0, \alpha_0}(-\lambda  t^{\alpha_0})  |f (0)|\le   Q \lambda^{1-\gamma } t^{\alpha_0(1-\gamma)-1} |f (0)|.
  \end{array}
\end{equation}
We then invoke the boundedness of $E_{\alpha_0, \alpha_0}(-\lambda t^{\alpha_0})$ from Lemma \ref{Lem1}, multiply \eqref{Cor:e1} by $e^{-\sigma t} t^{\vartheta}$ on both sides, and combine the estimates \eqref{Cor:e2}--\eqref{Cor:e3} as well as $\vartheta > \f{1}{2} -\alpha_0(1-\gamma)$, that is, $t^{\alpha_0(1-\gamma)-1 + \vartheta} \in L^2(0, T)$, to obtain $$ \big\|e^{-\sigma t}t^{\vartheta}v^{\prime \prime}\big\|_{L^2(0, T)}
  \le   Q\big(\|f\|_{H^1(0,T)} + \lambda^{1-\gamma }  |f(0)|  + \sigma^{-\f{1-\varepsilon} {2}} \|e^{-\sigma t}t^{\vartheta} v^{\prime \prime} \|_{L^2(0, T)} \big).$$
Choose a sufficiently large $\sigma$ to eliminate the last term on the right-hand side of the above inequality to obtain $\big\| t^{\vartheta}v^{\prime \prime}\big\|_{L^2(0, T)}
 \le   Q(\|f\|_{H^1(0,T)} + \lambda^{1-\gamma} |f(0)| )$, which proves the second estimate in \eqref{odestab}.
We thus complete the proof.
\end{proof}

 \section{Analysis of optimal control}\label{Sect:state}

\subsection{Analysis of adjoint  equation}
We prove the well-posedness and solution regularity of the adjoint problem  (\ref{AdjEq}).

\begin{theorem}\label{Cor:Adj}
Suppose $u, u_d \in L^2(L^2)$, the   adjoint problem (\ref{AdjEq}) admits a unique solution and
\begin{equation}\label{Cor:pdestab}\begin{array}{cl}
&\hspace{-0.1in} \ds  \|z\|_{H^1(L^2)}  + \|z\|_{L^2(\check H^2)}  \leq Q  \|u-u_d\|_{L^2(L^2)}.
\end{array}
\end{equation}
In addition, suppose $u, u_d \in L^2(\check H^2)$, we have\vspace{-0.05in}
\begin{equation}\label{Cor:pde:est1}\begin{array}{l}
 \hspace{-0.1in}\ds  \| z\|_{H^1(\check H^2)}  \leq Q \|u-u_d\|_{L^2(\check H^2)}.\vspace{-0.05in}
\end{array}
\end{equation}
Furthermore, suppose $u, u_d \in H^1( L^2)$ with $u(\bm x, T)$, $u_d(\bm x, T) \in \check H^{2-2\gamma}$,  we have
\begin{equation}\label{Cor:pde:est2}\begin{array}{l}
 \hspace{-0.175in}\ds   \big\|(T-t)^{\vartheta} \p_t^2 z \big\|_{L^2(L^2)}   \leq Q (\|u-u_d\|_{H^1(L^2)} + \|u(\cdot, T) \|_{\check H^{2-2\gamma}}+ \|u_d(\cdot, T) \|_{\check H^{2-2\gamma}})\vspace{-0.05in}
\end{array}
\end{equation}
with $\vartheta$ and $\gamma$ defined in \eqref{theta}.
  \end{theorem}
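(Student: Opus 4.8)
The plan is to reduce the adjoint problem to the scalar auxiliary equation \eqref{ode} mode by mode, and then to assemble the three estimates by Parseval summation over the eigenbasis $\{\lambda_i,\phi_i\}$. First I would pass from \eqref{AdjEq} to its forward-in-time reformulation \eqref{AdjEq2} via the substitution \eqref{AdjCap} together with Theorem \ref{Ref:Adj}; since the time reversal $\bar t=T-t$ preserves every Bochner norm appearing in the statement, it suffices to bound $\bar z$. Expanding $\bar z=\sum_i v_i(\bar t)\phi_i$ and $\bar u-\bar u_d=\sum_i f_i(\bar t)\phi_i$ and testing \eqref{AdjEq2} against $\phi_i$ decouples the system into the scalar problems
\begin{equation*}
{}^c\p_{\bar t}^{\alpha_0} v_i + \lambda_i v_i + \lambda_i g'*v_i = I_{\bar t}^{1-\alpha_0} f_i, \qquad v_i(0)=0,
\end{equation*}
which are exactly \eqref{ode} with $\lambda=\lambda_i$ and $f=f_i$. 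Existence, uniqueness, and the membership $\bar z\in H^1(L^2)\cap L^2(\check H^2)$ needed to legitimately invoke Theorem \ref{Ref:Adj} then follow by solving each mode through Theorem \ref{thm:ode} and verifying the summability that the estimates below supply.

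For \eqref{Cor:pde:est1} and the $H^1(L^2)$ part of \eqref{Cor:pdestab}, I would apply the first bound in \eqref{odestab}, namely $\|v_i\|_{H^1(0,T)}\le Q\|f_i\|_{L^2(0,T)}$ with $Q$ \emph{independent} of $\lambda_i$, and sum. By Parseval, $\|\bar z\|_{H^1(L^2)}^2=\sum_i\|v_i\|_{H^1(0,T)}^2$ and $\|\bar z\|_{H^1(\check H^2)}^2=\sum_i\lambda_i^2\|v_i\|_{H^1(0,T)}^2$, so weighting the modal bound by $1$ and by $\lambda_i^2$ respectively yields control by $\|u-u_d\|_{L^2(L^2)}$ and by $\|u-u_d\|_{L^2(\check H^2)}$ after undoing the time reversal.

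The weighted estimate \eqref{Cor:pde:est2} follows from the second bound in \eqref{odestab}, $\|t^\vartheta v_i''\|_{L^2(0,T)}\le Q(\|f_i\|_{H^1(0,T)}+\lambda_i^{1-\gamma}|f_i(0)|)$, again with $\lambda_i$-independent $Q$. Under the time reversal one has $(T-t)^\vartheta\p_t^2 z=\bar t^\vartheta\p_{\bar t}^2\bar z$ and, crucially, $f_i(0)=((\bar u-\bar u_d)(\cdot,0),\phi_i)=(u(\cdot,T)-u_d(\cdot,T),\phi_i)$. Squaring with $(a+b)^2\le 2a^2+2b^2$ and summing then gives $\sum_i\|f_i\|_{H^1(0,T)}^2=\|u-u_d\|_{H^1(L^2)}^2$ and $\sum_i\lambda_i^{2(1-\gamma)}|f_i(0)|^2=\|u(\cdot,T)-u_d(\cdot,T)\|_{\check H^{2-2\gamma}}^2$, which reproduces the right-hand side of \eqref{Cor:pde:est2} after a triangle inequality. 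The exponent match $2(1-\gamma)=2-2\gamma$ is precisely what forces the regularity index $\check H^{2-2\gamma}$ on the terminal data in \eqref{theta}.

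The remaining and main obstacle is the $L^2(\check H^2)$ part of \eqref{Cor:pdestab}, for which I must control $\lambda_i\|v_i\|_{L^2(0,T)}$ uniformly in $\lambda_i$; this is genuine maximal-$L^2$ regularity and is \emph{not} delivered by the $H^1(0,T)$ bound of Theorem \ref{thm:ode}. I would obtain it by rerunning the weighted-norm contraction of that proof on $\lambda_i v_i$ rather than $v_i$. Multiplying the solution expression by $\lambda_i$ gives
\begin{equation*}
\lambda_i v_i = \big(\lambda_i E_{\alpha_0,1}(-\lambda_i t^{\alpha_0})\big)*f_i - \big(\lambda_i t^{\alpha_0-1}E_{\alpha_0,\alpha_0}(-\lambda_i t^{\alpha_0})\big)*g'*(\lambda_i v_i),
\end{equation*}
and using the uniform-in-$\lambda$ bounds $\|\lambda_i E_{\alpha_0,1}(-\lambda_i t^{\alpha_0})\|_{L^1(0,T)}\le Q$ and $\lambda_i t^{\alpha_0}E_{\alpha_0,\alpha_0+1-\varepsilon}(-\lambda_i t^{\alpha_0})\le Q$ from Lemmas \ref{Lem1}--\ref{Lem2}, the memory term is dominated as in \eqref{ode:I3} by $Q\,t^{-\varepsilon}*|g'*(\lambda_i v_i)|$ and contributes a factor $\sigma^{\varepsilon-1}$ in the $e^{-\sigma t}$-weighted norm, hence is absorbed for large $\sigma$. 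This yields $\lambda_i\|v_i\|_{L^2(0,T)}\le Q\|f_i\|_{L^2(0,T)}$, and summing the squares gives $\|\bar z\|_{L^2(\check H^2)}^2=\sum_i(\lambda_i\|v_i\|_{L^2(0,T)})^2\le Q^2\|u-u_d\|_{L^2(L^2)}^2$, completing \eqref{Cor:pdestab}. The delicate point throughout is maintaining the $\lambda_i$-uniformity of the constants so that the $\ell^2$ summations over modes close.
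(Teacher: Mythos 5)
Your proposal is correct, and for three of the four bounds it coincides with the paper's proof: the reduction via Theorem \ref{Ref:Adj} to \eqref{AdjEq2}, the eigenfunction decoupling into \eqref{ode} with $\lambda=\lambda_i$ and $f=\bar u_i-\bar u_{d,i}$, the Parseval summation of the first bound in \eqref{odestab} (unweighted, and weighted by $\lambda_i^2$) for the $H^1(L^2)$ estimate and \eqref{Cor:pde:est1}, and the summation of the second bound in \eqref{odestab} with $f_i(0)=(u(\cdot,T)-u_d(\cdot,T),\phi_i)$ for \eqref{Cor:pde:est2} are exactly the paper's steps. The one place you genuinely diverge is the $L^2(\check H^2)$ part of \eqref{Cor:pdestab}: the paper does not prove a modal maximal-regularity bound but works at the PDE level in \eqref{pde:e4}, solving \eqref{AdjEq2} for $\Delta\bar z$, bounding ${}^c\p_{\bar t}^{\alpha_0}\bar z=\beta_{1-\alpha_0}*\p_{\bar t}\bar z$ by Young's inequality using the already-established $H^1(L^2)$ estimate, and absorbing $g'*\Delta\bar z$ through the $e^{-\sigma\bar t}$ weight via the factor $\sigma^{\varepsilon-1}$. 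Your alternative --- rerunning the weighted contraction on $\lambda_i v_i$ to get $\lambda_i\|v_i\|_{L^2(0,T)}\le Q\|f_i\|_{L^2(0,T)}$ uniformly in $\lambda_i$ and summing --- is also viable: the uniform bound $\|\lambda E_{\alpha_0,1}(-\lambda t^{\alpha_0})\|_{L^1(0,T)}\le Q$ follows from $I_t^1 E_{\alpha_0,1}(-\lambda t^{\alpha_0})=tE_{\alpha_0,2}(-\lambda t^{\alpha_0})$ and Lemma \ref{Lem1}, and the \eqref{ode:I3}-type domination lets the memory term be absorbed with $\sigma$ independent of $\lambda_i$. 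Your route is self-contained at the modal level and does not reuse the $H^1(L^2)$ step, at the cost of redoing the fixed-point estimate; the paper's route is shorter because the equation itself hands you $\Delta\bar z$. One algebraic slip to fix: multiplying $v_i=E_{\alpha_0,1}(-\lambda_i t^{\alpha_0})*f_i-\lambda_i\big(t^{\alpha_0-1}E_{\alpha_0,\alpha_0}(-\lambda_i t^{\alpha_0})\big)*(g'*v_i)$ by $\lambda_i$ produces the memory term $\big(t^{\alpha_0-1}E_{\alpha_0,\alpha_0}(-\lambda_i t^{\alpha_0})\big)*\big(g'*(\lambda_i v_i)\big)$ --- only one factor of $\lambda_i$ can migrate inside, whereas your display carries $\lambda_i$ both on the kernel and on $v_i$, overcounting by one power. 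The slip is harmless: with the correct identity the memory kernel, after the $|g'|\le Qt^{-\varepsilon}$ domination, becomes $t^{\alpha_0-\varepsilon}E_{\alpha_0,\alpha_0+1-\varepsilon}(-\lambda_i t^{\alpha_0})$, which is bounded uniformly in $\lambda_i$ by Lemma \ref{Lem1}, so the $\sigma^{\varepsilon-1}$ absorption closes verbatim and the $\ell^2$ summation over modes proceeds as you describe.
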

  \begin{proof}
  To analyze the adjoint equation \eqref{AdjEq}, it suffices to consider the equivalent formulation \eqref{AdjEq2} of the  forward-in-time problem  \eqref{AdjEq1} by Theorem \ref{Ref:Adj}.
For $\bar t \in [0, T]$, we expand $\bar z$,  $\bar u$, and $\bar u_d$ in \eqref{AdjEq2} by $\{\phi_i\}_{i=1}^{\infty}$  \cite{SakYam}, that is, $\bar z=\sum_{i=1}^\infty \bar z_i(\bar t)\phi_i(\bm x)$,  $\bar u=\sum_{i=1}^\infty \bar u_i(\bar t)\phi_i(\bm x)$, and $\bar u_d=\sum_{i=1}^\infty \bar u_{d,i}(\bar t)\phi_i(\bm x)$ with $\bar z_i(\bar t) := \big (\bar z(\cdot,\bar t),\phi_i \big)$, $\bar u_i(\bar t) := \big (\bar u(\cdot,\bar t),\phi_i \big)$ and $\bar u_{d,i}(\bar t) := \big (\bar u_d(\cdot,\bar t),\phi_i \big)$. We combine the above expressions with the problem \eqref{AdjEq2} to find that $\bar z$ is the solution to \eqref{AdjEq2} if and only if $\{\bar z_i\}_{i=1}^{\infty}$  satisfy the following equations for $i =1, 2, \ldots$
\begin{equation}\label{pde:e1}
^c\p_{\bar t}^{\alpha_0}  \bar z_i +\lambda_i \bar z_i + \lambda_i g^\prime *  \bar z_i=I_{\bar t}^{1-\alpha_0} (\bar u_i - \bar u_{d,i}),~ \bar t\in (0,T];~~\bar z_i(0)=0,
\end{equation}
which corresponds with the auxiliary differential equation \eqref{ode} with $v = \bar z_i$, $\lambda =\lambda_i$, and $f = \bar u_i - \bar u_{d,i}$, respectively. Then Theorem \ref{thm:ode} gives that \eqref{pde:e1} admits a unique $\bar z_i$ such that the regularity estimates in \eqref{odestab} hold. We combine these  to arrive at a  bound for $\hat z:=\int_0^{\bar t}\big[\sum_{i=1}^\infty \bar z_i'(s)\phi_i(\bm x)\big]ds $\hspace{-0.1in}
 \begin{equation}\begin{array}{l}
\label{pde:e2}
\hspace{-0.2in}\ds \|\p_{\bar t} \hat z \|^2_{L^2(L^2)} \! = \!\sum_{i=1}^\infty \big \|\bar z_i'\|_{L^2(0,T)}^2
\!\leq\!  Q\sum_{i=1}^\infty\|\bar u_i - \bar u_{d,i}\|_{L^2(0, T)}^2 \!=\! Q \|\bar u - \bar u_{d}\|_{L^2(L^2)}^2\\[0.1in]
\ds \qquad \qquad  = Q  \|u-u_d\|_{L^2(L^2)}^2.
\end{array}
\end{equation}
We note that $\hat z$ is the solution to \eqref{AdjEq2} since $\bar z_i(t) := \int_0^{\bar t} \bar z_i'(s)ds $ solves  the ordinary differential equation in (\ref{pde:e1}) for $i \ge 1$.
We thus have
$\|\p_t z\|_{L^2(L^2)} = \|\p_{\bar t} \bar z\|_{L^2(L^2)} \le Q \|u-u_d\|_{L^2(L^2)}.$
To bound $\|\bar z\|_{L^2(\check H^2)}$, we multiply \eqref{AdjEq2} by $e^{-\sigma \bar t}$ for $\sigma >0$ and combine \eqref{g:est}, the estimate below \eqref{diff:est} and \eqref{pde:e2} and  Young's inequality  to obtain
 \begin{equation}\begin{array}{ll}\label{pde:e4}
\ds \hspace{-0.15in}  \ds \|e^{-\sigma \bar t}\Delta \bar z\|_{L^2(L^2)}  \ds = \big\|e^{-\sigma \bar t}\big( {}^c\p_{\bar t}^{\alpha_0}\bar z - g^\prime * \Delta \bar z - I_{\bar t}^{1-\alpha_0} (\bar u -\bar u_d))\big\|_{L^2(L^2)}\\[0.1in]
\ds \le \| \bar t^{-\alpha_0} \|_{L^1(0, T)} \big\|\p_{\bar t} \bar z\|_{L^2(L^2)} \! + \! Q \big(\sigma^{\varepsilon-1} \|\Delta \bar z\|_{L^2(L^2)} \!+ \!\|\bar u -\bar u_d\|_{L^2(L^2)}\big)\\[0.1in]
\ds \le  Q \| u - u_d\|_{L^2(L^2)}+ Q \sigma^{\varepsilon-1} \|\Delta \bar z\|_{L^2(L^2)}.
\end{array}
\end{equation}
Choose a sufficiently large $\sigma$ in \eqref{pde:e4}  to obtain
$\|\Delta \bar z\|_{L^2(L^2)} = \|\bar z\|_{L^2(\check H^2)}  = \| z\|_{L^2(\check H^2)}  \le Q \|u-u_d\|_{L^2(L^2)}$,
which, together with the estimate below \eqref{pde:e2},   proves \eqref{Cor:pdestab}.

 Let $\check z \in H^1(L^2)\cap L^2(\check H^2)$ be another solution to the problem (\ref{AdjEq2}) with the Fourier coefficients $\{\check z_i(\bar t)\}$ for $i=1,2,\cdots$. Then the Fourier coefficients of $\hat z - \check z$ solve the homogeneous analogues of \eqref{AdjEq2}. By the unique solvability of \eqref{pde:e1}, we conclude that $\bar z_i\equiv\check z_i$ for $i \ge 1$, which implies the uniqueness of the solutions of \eqref{AdjEq1}--\eqref{AdjEq2} and the adjoint problem \eqref{AdjEq}. We thus prove the first statement of the theorem.

 We follow the procedures in \eqref{pde:e2} and apply the estimate \eqref{odestab}  to bound
$$ \|  \p_{\bar t} \bar z  \|^2_{L^2(\check H^2)}= \sum_{i=1}^\infty \lambda_i^2  \| \bar z_i^{\prime }\|_{L^2(0,T)}^2\leq  Q\sum_{i=1}^\infty  \lambda_i^2  \|\bar u_i-\bar u_{d,i} \|_{L^2(0, T)}^2 = Q  \|u-u_d\|_{L^2(\check H^2)}^2,$$
which, combined with \eqref{Cor:pdestab} and $\|  \p_t z  \|^2_{L^2(\check H^2)} = \|  \p_{\bar t} \bar z  \|^2_{L^2(\check H^2)}$, proves \eqref{Cor:pde:est1}.

 By \eqref{odestab}, we obtain
 \begin{equation}
 \begin{array}{l}\label{thm2:est:ztt}
\hspace{-0.1in}\ds \big \|(T-t)^{\vartheta}\p_t^2 z \big\|^2_{L^2(L^2)} = \sum_{i=1}^\infty \big \|\bar t^{\vartheta}\bar z_i^{\prime \prime}\big\|_{L^2(0,T)}^2\\[0.1in]
\ds\qquad \qquad \quad \leq  Q\sum_{i=1}^\infty (   \|\bar u_i -\bar u_{d,i}\|_{H^1(0, T)}^2 +  \lambda_i^{2-2\gamma}( |\bar u_i(0)|^2 + |\bar u_{d,i}(0)|^2 )\\[0.15in]
\ds\qquad \qquad \quad   = Q \big(  \|u-u_d\|_{H^1(L^2)}^2+  \|u(\cdot, T)\|_{\check H^{2-2\gamma}}^2 + \|u_d(\cdot, T)\|_{\check H^{2-2\gamma}}^2),
\end{array}
\end{equation}
which proves \eqref{Cor:pde:est2}. We thus complete the proof of the theorem.
  \end{proof}

 \subsection{Analysis of state equation}

 The reformulated version \eqref{Model} of the state equation takes exactly the same form as the problem \eqref{AdjEq2} with $\bar z$, $\bar u - \bar u_d$ replaced by $u$, $q+c$, respectively, and thus the results in Theorem \ref{Cor:Adj}  holds for \eqref{Model} and thus for \eqref{VtFDEs} by Theorem \ref{equi01}, leading to the following estimates.

 \begin{theorem}\label{thm:forward}
 Suppose $q,c\in L^2(L^2)$, then the state equation \eqref{VtFDEs} has a unique solution with the following regularity estimate
\begin{equation}\label{thm:forward:e1}\begin{array}{cl}
&\hspace{-0.1in} \ds   \|   u  \|_{H^1(L^2)} +  \|   u  \|_{L^2(\check H^2)} \leq Q \|q+c\|_{L^2(L^2)}.
\end{array}
\end{equation}
  \end{theorem}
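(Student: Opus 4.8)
The plan is to exploit the structural identity, already highlighted in the paragraph preceding Theorem \ref{thm:forward}, that the reformulated state equation \eqref{Model} is \emph{formally identical} to the reformulated adjoint equation \eqref{AdjEq2}. Concretely, I would observe that \eqref{Model} is obtained from \eqref{AdjEq2} by the substitutions $\bar z \mapsto u$ and $\bar u - \bar u_d \mapsto q + c$, together with the matching zero initial condition $u(\bm x,0)=0$ (which I take from \eqref{VtFDEs} rather than the generic $u_0$ appearing in \eqref{Model}, since here the initial datum vanishes). Consequently, I would not redo any analysis from scratch: I would simply apply the first estimate \eqref{Cor:pdestab} of Theorem \ref{Cor:Adj} verbatim, reading $Q\|u-u_d\|_{L^2(L^2)}$ as $Q\|q+c\|_{L^2(L^2)}$, to conclude both the existence--uniqueness and the regularity bound \eqref{thm:forward:e1}.

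The key steps in order are as follows. First, I would invoke Theorem \ref{equi01} to reduce the study of \eqref{VtFDEs} to that of its equivalent reformulation \eqref{Model}, which is legitimate precisely under the hypothesis $u \in H^1(L^2)\cap L^2(\check H^2)$ required by that theorem; this regularity will be furnished \emph{a posteriori} by the bound we are about to establish, so the argument is self-consistent. Second, I would spell out the correspondence between \eqref{Model} and \eqref{AdjEq2} and verify that the forcing term $q+c$ lies in $L^2(L^2)$ under the stated hypothesis $q,c\in L^2(L^2)$, so that the input hypothesis ``$u,u_d\in L^2(L^2)$'' of Theorem \ref{Cor:Adj} is met with $\bar u - \bar u_d$ replaced by $q+c$. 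Third, I would apply the conclusion \eqref{Cor:pdestab} directly to obtain the unique solvability of \eqref{Model} together with
\[
\|u\|_{H^1(L^2)} + \|u\|_{L^2(\check H^2)} \le Q\|q+c\|_{L^2(L^2)},
\]
which is exactly \eqref{thm:forward:e1}. Finally, I would transfer this back to \eqref{VtFDEs} via the equivalence in Theorem \ref{equi01}, noting that the regularity just obtained places $u$ in the admissible class $H^1(L^2)\cap L^2(\check H^2)$, thereby closing the loop.

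The main conceptual obstacle is essentially already dispatched by the authors' framing: the real work lives in Theorem \ref{thm:ode} and Theorem \ref{Cor:Adj}, where the indefinite, non-positive-definite convolution kernel $g'$ is controlled through the weighted contraction-mapping argument and the eigenfunction-by-eigenfunction reduction. Since the state equation shares the identical reformulated structure, no new estimate is needed. The only point demanding genuine care is the \emph{logical ordering} between Theorem \ref{equi01} and Theorem \ref{thm:forward}: Theorem \ref{equi01} presupposes $u \in H^1(L^2)\cap L^2(\check H^2)$ to justify the equivalence, yet that very regularity is the output of Theorem \ref{thm:forward}. I would resolve this by constructing the solution at the level of \eqref{Model} directly (via the Fourier/auxiliary-equation machinery of Theorem \ref{Cor:Adj}, which produces a solution already in $H^1(L^2)\cap L^2(\check H^2)$), and only then invoking Theorem \ref{equi01} in the forward direction to certify that this function solves \eqref{VtFDEs}; uniqueness for \eqref{VtFDEs} follows from uniqueness for \eqref{Model} together with the reverse implication of Theorem \ref{equi01}.
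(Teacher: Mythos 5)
Your proposal is correct and follows essentially the same route as the paper, which proves Theorem \ref{thm:forward} in one line by observing that the reformulation \eqref{Model} coincides with \eqref{AdjEq2} under the substitutions $\bar z \mapsto u$, $\bar u - \bar u_d \mapsto q+c$, and then invoking Theorem \ref{Cor:Adj} together with the equivalence in Theorem \ref{equi01}. Your extra remark on resolving the apparent circularity---constructing the solution of \eqref{Model} first via the Fourier machinery and only then certifying it solves \eqref{VtFDEs}---is a more careful articulation of what the paper leaves implicit, not a different argument.
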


We then prove solution regularity of the  state equation \eqref{VtFDEs}, which differs from the proof of Theorem \ref{Cor:Adj} to accommodate the low regularity of the control variable.

  \begin{theorem}\label{thm:forward2}
Suppose $q, c \in H^1(L^2)$ with $q(\bm x, 0) \in \check H^{2-2\gamma}$, and $u_d \in L^2(\check H^2)$, the following regularity estimates hold
\begin{equation}\label{thm:forward:e2}\begin{array}{ll}
\hspace{-0.2in} \ds \ds  \big\| t^{\vartheta} \p_t^2 u \big\|_{L^2(L^2)}\! +\!\big\| t^{\vartheta} \p_t u \big\|_{L^2(\check H^2)}\!\leq\! Q \big(\|q+c\|_{H^1(L^2)} \!+\! \|u_d\|_{L^2(\check H^{2})} \!+\! \|q(\cdot,0)\|_{\check H^{2-2\gamma}})
\end{array}
\end{equation}
with $\vartheta$ and $\gamma$ defined in \eqref{theta}.
  \end{theorem}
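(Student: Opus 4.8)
The plan is to adapt the eigenfunction-expansion argument of Theorem~\ref{Cor:Adj}, the new ingredient being that the control only has low temporal regularity at $t=0$. First I would expand $u=\sum_{i=1}^\infty u_i(t)\phi_i(\bm x)$ and set $f_i:=(q+c,\phi_i)$; then $u$ solves \eqref{Model} if and only if each coefficient $u_i$ solves the auxiliary equation \eqref{ode} with $v=u_i$, $\lambda=\lambda_i$, and $f=f_i$. Applying the weighted second-derivative bound in \eqref{odestab} mode-by-mode and summing gives
\begin{equation*}
\big\|t^{\vartheta}\p_t^2 u\big\|_{L^2(L^2)}^2=\sum_{i=1}^\infty\big\|t^{\vartheta}u_i''\big\|_{L^2(0,T)}^2\le Q\sum_{i=1}^\infty\Big(\|f_i\|_{H^1(0,T)}^2+\lambda_i^{2-2\gamma}|f_i(0)|^2\Big),
\end{equation*}
in which the first group sums to $\|q+c\|_{H^1(L^2)}^2$ and the second to $\|(q+c)(\cdot,0)\|_{\check H^{2-2\gamma}}^2$. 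Splitting the latter, the $q(\cdot,0)$ part is exactly the assumed quantity $\|q(\cdot,0)\|_{\check H^{2-2\gamma}}^2$.

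The hard part is the remaining piece $\|c(\cdot,0)\|_{\check H^{2-2\gamma}}$, since $c\in H^1(L^2)$ only yields $c(\cdot,0)\in L^2$. Here I would invoke the optimality characterization \eqref{VarC}, which writes $\kappa c(\bm x,0)$ as the spatial constant $m:=\max\{0,\frac{1}{|\Omega|}\int_\Omega z(\bm x,0)\,d\bm x\}$ minus $z(\bm x,0)$. Since $z(\bm x,0)=\bar z(\bm x,T)$, the adjoint regularity $\bar z\in H^1(\check H^2)$ from \eqref{Cor:pde:est1} together with the embedding $H^1(0,T)\hookrightarrow C[0,T]$ gives $z(\cdot,0)\in\check H^2\hookrightarrow\check H^{2-2\gamma}$ with
\begin{equation*}
\|z(\cdot,0)\|_{\check H^{2-2\gamma}}\le Q\|\bar z\|_{H^1(\check H^2)}\le Q\|u-u_d\|_{L^2(\check H^2)}\le Q\big(\|q+c\|_{L^2(L^2)}+\|u_d\|_{L^2(\check H^2)}\big),
\end{equation*}
where the last step uses $\|u\|_{L^2(\check H^2)}\le Q\|q+c\|_{L^2(L^2)}$ from \eqref{thm:forward:e1} and the hypothesis $u_d\in L^2(\check H^2)$; this is precisely how $\|u_d\|_{L^2(\check H^2)}$ must enter the bound. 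The constant contributes $|m|\,\|1\|_{\check H^{2-2\gamma}}$ with $|m|\le Q\|z(\cdot,0)\|_{L^2}$, and the constant function lies in $\check H^{2-2\gamma}$ if and only if $2-2\gamma<\tfrac12$, i.e. $\gamma>\tfrac34$, which is exactly the lower bound imposed in \eqref{theta}. Collecting these estimates controls $\|c(\cdot,0)\|_{\check H^{2-2\gamma}}$ and hence the full $\|t^\vartheta\p_t^2 u\|_{L^2(L^2)}$. This is the step I expect to be the main obstacle: it forces the argument to leave the framework of the single state equation and couple back through the optimality condition and the adjoint regularity, which is the structural reason the bound depends on $u_d$.

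It remains to estimate $\|t^\vartheta\p_t u\|_{L^2(\check H^2)}^2=\sum_i\lambda_i^2\|t^\vartheta u_i'\|_{L^2(0,T)}^2$, which I would do by differentiating \eqref{ode} once in $t$. Using $u_i(0)=0$ and $u_i'(0)=f_i(0)$ (from \eqref{diff:sol}), the singular boundary contributions $\beta_{1-\alpha_0}(t)f_i(0)$ cancel and leave the clean identity
\begin{equation*}
\lambda_i u_i'=I_t^{1-\alpha_0}f_i'-I_t^{1-\alpha_0}u_i''-\lambda_i\,g'*u_i'.
\end{equation*}
Setting $w:=\lambda_i u_i'$, weighting by $e^{-\sigma t}t^\vartheta$, and estimating the convolutions with the split $t^\vartheta=s^\vartheta+(t^\vartheta-s^\vartheta)$, $|t^\vartheta-s^\vartheta|\le Q(t-s)^\vartheta$ as in \eqref{ode:e4}, the middle term satisfies $\|t^\vartheta I_t^{1-\alpha_0}u_i''\|_{L^2}\le Q\|t^\vartheta u_i''\|_{L^2}$ (the associated kernel is Hilbert--Schmidt precisely under $\alpha_0-\tfrac12<\vartheta<\tfrac12$, which holds by \eqref{theta}), while $\|e^{-\sigma t}t^\vartheta g'*w\|$ carries a factor $\sigma^{\varepsilon-1}$ coming from $|g'(t)|\le Q(|\ln t|+1)$ and is absorbed for large $\sigma$. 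This yields $\lambda_i\|t^\vartheta u_i'\|_{L^2}\le Q(\|f_i\|_{H^1(0,T)}+\|t^\vartheta u_i''\|_{L^2(0,T)})$; squaring and summing over $i$ bounds $\|t^\vartheta\p_t u\|_{L^2(\check H^2)}$ by $\|q+c\|_{H^1(L^2)}$ plus the already-estimated $\|t^\vartheta\p_t^2 u\|_{L^2(L^2)}$, which completes the proof.
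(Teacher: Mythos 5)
Your proposal is correct, and for the crux of the theorem it takes the same route as the paper: you bound $\|t^{\vartheta}\p_t^2 u\|_{L^2(L^2)}$ mode-by-mode via the weighted estimate in \eqref{odestab}, and you resolve the genuine obstruction $\|c(\cdot,0)\|_{\check H^{2-2\gamma}}$ exactly as the paper does, by leaving the state equation and coupling back through the projection formula \eqref{VarC}, the adjoint regularity \eqref{Cor:pde:est1} together with $H^1(0,T;\check H^2)\hookrightarrow C([0,T];\check H^2)$, and \eqref{thm:forward:e1} — your identification of this coupling as the structural reason $\|u_d\|_{L^2(\check H^2)}$ enters the bound matches \eqref{thm:forward:e3} precisely (the paper phrases the constant-in-space term through the equivalence $\check H^{2-2\gamma}\simeq H^{2-2\gamma}$ for $\gamma\in(\frac34,1)$, you phrase it as $1\in\check H^{2-2\gamma}$ iff $2-2\gamma<\frac12$; same fact). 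Where you genuinely deviate is the bound on $\|t^{\vartheta}\p_t u\|_{L^2(\check H^2)}$: the paper differentiates the reformulated PDE to get \eqref{eq2}, which retains the singular term $\beta_{1-\alpha_0}(t)(q+c-\p_t u)(\bm x,0)$ and then pays for it twice — via $\|t^{\vartheta-\alpha_0}\|_{L^2(0,T)}<\infty$, which is where $\vartheta>\alpha_0-\frac12$ from \eqref{theta} is consumed, and via the $W^{2,1}(L^2)$ embedding \eqref{u0} to control $\|\p_t u(\cdot,0)\|_{L^2}$. You instead differentiate each modal ODE and use $u_i'(0)=f_i(0)$ (legitimately read off from \eqref{diff:sol}, as the paper itself does before \eqref{Cor:e1}) to cancel the $\beta_{1-\alpha_0}(t)f_i(0)$ contributions exactly. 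Your variant is cleaner: it makes \eqref{u0} and the constraint $\vartheta>\alpha_0-\frac12$ unnecessary at this step, at the mild price of justifying the differentiated identity mode-by-mode; the paper's version works at the PDE level and reuses \eqref{u0}, which it needs again in Theorem \ref{thm:OptContrl} anyway.

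One justification should be repaired, though it is a misstatement rather than a gap. Your parenthetical claim that the kernel behind $\|t^{\vartheta}I_t^{1-\alpha_0}u_i''\|_{L^2(0,T)}\le Q\|t^{\vartheta}u_i''\|_{L^2(0,T)}$ is ``Hilbert--Schmidt precisely under $\alpha_0-\frac12<\vartheta<\frac12$'' is false once $\alpha_0\ge\frac12$: the kernel $t^{\vartheta}(t-s)^{-\alpha_0}s^{-\vartheta}$ then has a non-square-integrable diagonal singularity for every $\vartheta$, since $\int_0^t(t-s)^{-2\alpha_0}s^{-2\vartheta}\,ds$ diverges. The inequality itself is true, but its proof must go through the split $t^{\vartheta}=s^{\vartheta}+(t^{\vartheta}-s^{\vartheta})$ with Young's and H\"older's inequalities as in \eqref{ode:e4} with $\varepsilon$ replaced by $\alpha_0$ — the paper's route, which you also cite — and the operative conditions there are $\vartheta<\frac12$ and $2\vartheta-\alpha_0>-1$, both guaranteed by \eqref{theta}. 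Since you invoke that split alongside the faulty Hilbert--Schmidt remark, the estimate survives; just delete the parenthetical or replace it by the beta-function computation above.
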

 \begin{proof}
 By \eqref{odestab} and \eqref{thm2:est:ztt}, we similarly obtain
 \begin{equation*}
 \begin{array}{l}\label{thm2:est:utt}
\hspace{-0.1in}\ds \big \|t^{\vartheta}\p_t^2  u \big\|^2_{L^2(L^2)} \le Q \big(  \|q+c\|_{H^1(L^2)}^2+ \|q(\cdot, 0)\|_{\check H^{2-2\gamma}}^2 + \|c(\cdot, 0)\|_{\check H^{2-2\gamma}}^2).
\end{array}
\end{equation*}
We invoke the equivalence between $\check H^{2-2\gamma}$ and $H^{2-2\gamma}$ for $\gamma \in (\f{3}{4}, 1) $ \cite{Jinbook} and \eqref{VarC} to bound
\begin{equation*}\begin{array}{l}
\ds \|c(\cdot,0)\|_{\check H^{2-2\gamma}} \le  \|c(\cdot,0)\|_{ H^{2-2\gamma}} \le \f1{\kappa}\|z(\cdot,0)\|_{ H^{2-2\gamma}}
\ds \leq Q\|z\|_{H^1( \check H^2)}\\[0.1in]
\ds \qquad \qquad \le Q \|u-u_d\|_{L^2(\check H^2)}\le Q \big(\|q+c\|_{L^2(L^2)} + \|u_d\|_{L^2(\check H^2)}\big).
\end{array}\end{equation*}
We combine the above two estimates to get
\begin{equation}\label{thm:forward:e3}\begin{array}{l}
\ds \big\| t^{\vartheta} \p_t^2 u \big\|_{L^2(L^2)}
\ds \le Q(\|q+c\|_{H^1(L^2)} + \|q(\cdot,0)\|_{\check H^{2-2\gamma}} + \|c(\cdot,0)\|_{\check H^{2-2\gamma}}) \\[0.1in]
\ds \qquad \qquad \qquad \qquad \le Q(\|q+c\|_{H^1(L^2)}  + \|u_d\|_{L^2(\check H^2)} +  \|q(\cdot,0)\|_{\check H^{2-2\gamma}}).
\end{array}
\end{equation}

To bound $\p_t \Delta u$, we  differentiate \eqref{VtFDEs} with respect to $t$ and invoke the relation
$ \p_t I_t^{1-\alpha_0} \p_t u =  I_t^{1-\alpha_0} \p_t^2 u + \beta_{1-\alpha_0}(t)\p_t u(\bm x, 0)$ \cite{Jinbook}
 to obtain
\begin{equation}\label{eq2}\begin{array}{l}
\hspace{-0.175in} I_t^{1-\alpha_0}\p_t^2 u -  \p_t \Delta  u - g^\prime * \p_t \Delta  u  \!=\!  I_t^{1-\alpha_0}\p_t (q+c) + \beta_{1-\alpha_0}(t)(q+c-\p_t u)(\bm x, 0).
 \end{array}
\end{equation}
We  incorporate \eqref{ode:e4} to bound the last term on the left-hand side of \eqref{eq2}, and apply \eqref{eq2}, Young's inequality, $\vartheta > \alpha_0 -\f{1}{2}$ and  the Sobolev embedding $H^1(0, T) \hookrightarrow C[0, T]$  to bound $\p_t \Delta u$ in \eqref{eq2}
 \begin{equation*}\begin{array}{ll}\label{thm2:est:e2}
\ds \hspace{-0.15in}  \ds \|e^{-\sigma t} t^{\vartheta} \p_t  u\|_{L^2(\check H^2)}
 & \ds  \le \big\|e^{-\sigma t}t^{\vartheta} (\beta_{1-\alpha_0}\!*\!\p_t^2 u)\big\|_{L^2(L^2)}  \! +\! Q \sigma^{-\f{1-\varepsilon}{2}}  \|e^{-\sigma t} t^{\vartheta} \p_t  u\|_{L^2(\check H^2)}\\[0.1in]
  &\ds    +   \|q+c\|_{H^1(L^2)}  +  Q\| t^{\vartheta -\alpha_0}\|_{L^2(0, T)}\|(q+c-\p_t u)(\cdot, 0)\|_{L^2(\Omega)}\\[0.1in]
    &\ds\le Q \big(\big\|e^{-\sigma t}t^{\vartheta} (\beta_{1-\alpha_0}\!*\!\p_t^2 u)\big\|_{L^2(L^2)}  + \sigma^{-\f{1-\varepsilon}{2}}  \|e^{-\sigma t} t^{\vartheta} \p_t  u\|_{L^2(\check H^2)}\\[0.1in]
    &\ds \qquad \quad +  \|q+c\|_{H^1(L^2)} + \|\p_t u(\cdot, 0)\|_{L^2}\big).
\end{array}
\end{equation*}
We follow the procedures in \eqref{ode:e4} with $\varepsilon$ replaced by $\alpha_0$ to similarly bound the first term on the right-hand side of the above inequality, choose a sufficiently large $\sigma$ to drop the second term, and apply
\begin{equation}\label{u0}\begin{array}{l}
\ds \|\p_t u(\cdot, 0)\|_{L^2} \le \|u\|_{W^{2,1}({L^2})}\leq Q\Big(\|u\|_{H^1(L^2)}+\int_0^T \!\! t^{-\vartheta} \big\|t^{\vartheta}\p_{t}^2 u(\cdot,t)\big\|_{L^2}dt\Big)\\[0.1in]
\ds \qquad \leq Q \big(\|q+c\|_{H^1(L^2)} + \|u_d\|_{L^2(\check H^2)} + \|q(\cdot,0)\|_{ \check H^{2-2\gamma}})
\end{array}
\end{equation}
from \eqref{thm:forward:e3} to obtain\vspace{-0.05in}
\begin{equation*}\begin{array}{ll}\label{thm2:est:e3}
\ds \hspace{-0.15in}  \ds \|t^{\vartheta} \p_t  u\|_{L^2(\check H^2)}
&\ds \ds \le Q \big(\big\| t^{\vartheta} \p_t^2 u \big\|_{L^2(L^2)} + \|q+c\|_{H^1(L^2)} + \|\p_t u(\cdot, 0)\|_{L^2}\big) \\[0.1in]
&\ds \ds \le  Q(\|q+c\|_{H^1(L^2)}  + \|u_d\|_{L^2(\check H^2)}  + \|q(\cdot,0)\|_{ \check H^{2-2\gamma}}),
\end{array}
\end{equation*}
which, combined with \eqref{thm:forward:e3}, proves \eqref{thm:forward:e2}.
 \end{proof}

\subsection{Analysis of optimal control}\label{Sect:Opt}
We prove the well-posedness of the optimal control problem and the regularity of its solutions.

\begin{theorem}\label{thm:OptContrl}
Suppose $q\in H^1(L^2)$, $u_d \in H^1(L^2) \cap L^2(\check H^2)$, and $q(\bm x, 0), u_d(\bm x, T) \in \check H^{2-2\gamma}$. Then the optimal control problem (\ref{ObjFun})--(\ref{VtFDEs}) admits a unique solution $(u, c)$ such that $u \in H^1(L^2) \cap L^2(\check H^2)$. Furthermore, the control variable satisfies $c \in H^1(L^2)\cap L^2(H^2)$, the adjoint  equation \eqref{AdjEq} has a unique solution $z \in H^1(L^2)\cap L^2(\check H^2)$, and the following estimates hold
\begin{equation}\label{OptContrl}\begin{array}{l}
\hspace{-0.15in}\hspace{-0.1in} \|u\|_{H^1(L^2)} +\|u\|_{ L^2(\check H^2)} + \| t^{\vartheta} \p_t^2 u \|_{L^2(L^2)}
+ \|t^{\vartheta}\p_t u \|_{L^2(\check H^2)} + \|u\|_{W^{1,1}(\check H^2)} \\[0.1in]
+\|u\|_{W^{2,1}(L^2)}\le Q (\|q\|_{H^1(L^2)} +\|c\|_{L^2(L^2)} +\|u_d\|_{L^2(\check H^2)} + \|q(\cdot, 0) \|_{\check H^{2-2\gamma}}), \\[0.1in]
\hspace{-0.15in}\hspace{-0.1in} \|z\|_{H^1(L^2)} + \|z\|_{ L^2(\check H^2)} + \big \|(T-t)^{\vartheta}\p_t^2 z \big \|_{L^2(L^2)} +  \| z  \|_{H^1(\check H^2)}\\[0.1in]
\quad   \le Q (\|q\|_{H^1(L^2)} +\|c\|_{L^2(L^2)} +\|u_d\|_{H^1(L^2)} \\[0.1in]
\ds \qquad \qquad \qquad +\|u_d\|_{L^2(\check H^2)} + \|q(\cdot, 0) \|_{\check H^{2-2\gamma}}+ \|u_d(\cdot, T) \|_{\check H^{2-2\gamma}}), \\[0.1in]
\hspace{-0.15in}\hspace{-0.1in}  \|c\|_{H^1(L^2)} \!+\!\|c\|_{ L^2( H^2)}  \le  Q\big(\|u_d\|_{L^2(\check H^2)}+\|q\|_{L^2(L^2)}+\|c\|_{L^2(L^2)} \big).
\end{array}\end{equation}
Here $\vartheta$ and $\gamma$ are defined in \eqref{theta}.
\end{theorem}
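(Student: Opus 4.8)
The plan is to first establish existence and uniqueness by a reduced-functional argument, and then close all the regularity bounds by a finite bootstrap that alternates between the state estimates (Theorems~\ref{thm:forward}--\ref{thm:forward2}) and the adjoint estimates (Theorem~\ref{Cor:Adj}), using the pointwise formula \eqref{VarC} to transfer regularity from $z$ to $c$. For existence, Theorem~\ref{thm:forward} lets me define the affine, bounded control-to-state map $c\mapsto u(c)$ from $L^2(L^2)$ into $H^1(L^2)\cap L^2(\check H^2)$, so the reduced cost $\hat J(c):=J(u(c),c)$ is continuous, strictly convex, and coercive on $L^2(L^2)$ because of the term $\tfrac{\kappa}{2}\|c\|^2_{L^2(L^2)}$ with $\kappa>0$. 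As $U_{ad}$ is nonempty (it contains $c\equiv 0$), convex, and closed in the Hilbert space $L^2(L^2)$, the direct method yields a unique minimizer, hence a unique pair $(u,c)$ with $u\in H^1(L^2)\cap L^2(\check H^2)$; Theorem~\ref{thm:OptCond} and the ensuing Remark then furnish the adjoint $z$ and the projection identity \eqref{VarC}.

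I would then run the bootstrap $c\in L^2(L^2)\Rightarrow u\Rightarrow z\Rightarrow c\text{ upgraded}\Rightarrow u\text{ upgraded}\Rightarrow z\text{ upgraded}$, which terminates because each step invokes a self-contained estimate and never requires a norm stronger than the one it outputs. Starting from $c\in L^2(L^2)$, Theorem~\ref{thm:forward} gives the low-order $u$ bounds; inserting $u-u_d$ (with $u$ and $u_d$ both in $L^2(\check H^2)$, using the hypothesis on $u_d$) into Theorem~\ref{Cor:Adj} produces $z\in H^1(L^2)\cap L^2(\check H^2)\cap H^1(\check H^2)$ through \eqref{Cor:pdestab} and \eqref{Cor:pde:est1}. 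The key step is reading the regularity of $c$ off \eqref{VarC}: the term $\max\{0,\tfrac1{|\Omega|}\int_\Omega z\,d\bm x\}$ is constant in $\bm x$, so $\kappa\Delta c=-\Delta z$ gives $\|c\|_{L^2(H^2)}\le Q\|z\|_{L^2(\check H^2)}$ (the target is $H^2$, not $\check H^2$, precisely because $c$ need not vanish on $\p\Omega$), while the $1$-Lipschitz continuity of $\max\{0,\cdot\}$ gives $\|\p_t c\|_{L^2(L^2)}\le Q\|\p_t z\|_{L^2(L^2)}$; together these are the third bound in \eqref{OptContrl}.

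Since $z\in H^1(\check H^2)\hookrightarrow C([0,T];\check H^2)$, the initial trace obeys $c(\cdot,0)\in\check H^2\subset\check H^{2-2\gamma}$, valid because $2-2\gamma\in(0,\tfrac12)$ for $\gamma\in(\tfrac34,1)$, where the constant piece is harmless. With $c\in H^1(L^2)$ and this trace, Theorem~\ref{thm:forward2} yields \eqref{thm:forward:e2}, and substituting the third bound for $\|c\|_{H^1(L^2)}$ inside $\|q+c\|_{H^1(L^2)}$ gives the first block of \eqref{OptContrl}; the $W^{1,1}(\check H^2)$ and $W^{2,1}(L^2)$ norms then follow by Cauchy--Schwarz from the weighted bounds using $t^{-\vartheta}\in L^2(0,T)$ (as $\vartheta<\tfrac12$), exactly as in \eqref{u0}. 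For the sharpened adjoint block I would apply \eqref{Cor:pde:est2}, which needs $u\in H^1(L^2)$ (available) and the terminal trace $u(\cdot,T)\in\check H^{2-2\gamma}$; the latter follows from $H^1(L^2)\cap L^2(\check H^2)\hookrightarrow C([0,T];\check H^1)$ and $\check H^1\subset\check H^{2-2\gamma}$, with its norm controlled by the low-order $u$ bounds. Collecting \eqref{Cor:pdestab}, \eqref{Cor:pde:est1}, \eqref{Cor:pde:est2} and rewriting every $\|u-u_d\|$ in terms of the data and $\|c\|_{L^2(L^2)}$ produces the second block.

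I expect the main obstacle to be the bookkeeping of this bootstrap rather than any single hard inequality: one must verify that the initial and terminal traces $c(\cdot,0)$ and $u(\cdot,T)$ genuinely lie in $\check H^{2-2\gamma}$ so that the sharp weighted Theorem~\ref{thm:forward2} and the estimate \eqref{Cor:pde:est2} can be invoked, and one must confirm that the chain closes, i.e. that no step secretly requires the higher-order regularity it is meant to output. The cleanest way to guarantee closure is to observe that the only place the optimality coupling enters is \eqref{VarC}, and there the spatially constant max-term decouples the spatial regularity of $c$ from its boundary behavior, so that each regularity transfer is one-directional along the chain above.
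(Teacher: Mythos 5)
Your proposal is correct and follows essentially the same architecture as the paper's proof: well-posedness first, then the one-directional bootstrap $c\Rightarrow u$ (Theorem \ref{thm:forward}) $\Rightarrow z$ (Theorem \ref{Cor:Adj}) $\Rightarrow c$ via \eqref{VarC} $\Rightarrow$ weighted state estimates (Theorem \ref{thm:forward2}) $\Rightarrow$ weighted adjoint estimate \eqref{Cor:pde:est2}, with the spatial constancy of the $\max$-term giving $\|c\|_{L^2(H^2)}\le Q\|z\|_{L^2(\check H^2)}$, the Lipschitz truncation giving $\|c\|_{H^1(L^2)}\le Q\|z\|_{H^1(L^2)}$, and the third block of \eqref{OptContrl} re-substituted so that only $\|c\|_{L^2(L^2)}$ survives on the right-hand sides --- exactly the paper's bookkeeping, including the role of $2-2\gamma<\tfrac12$ in making $\check H^{2-2\gamma}=H^{2-2\gamma}$ insensitive to the boundary behavior of $c$. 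You deviate in two local places. First, you prove existence and uniqueness by the direct method on the reduced functional (affine bounded control-to-state map, strict convexity and coercivity from $\kappa>0$, and $U_{ad}$ nonempty, closed, convex), whereas the paper simply cites \cite[Theorem 4.3]{ZheWanSICON}; your version is sound and more self-contained. Second, for the terminal trace you use $H^1(L^2)\cap L^2(\check H^2)\hookrightarrow C([0,T];\check H^1)\hookrightarrow C([0,T];\check H^{2-2\gamma})$, which needs only the low-order bounds of Theorem \ref{thm:forward}, whereas the paper derives $\|u(\cdot,T)\|_{\check H^{2-2\gamma}}\le\|u\|_{W^{1,1}(\check H^2)}$ from the weighted estimate in \eqref{uT}; both are valid, and yours is lighter for this particular step, though nothing is saved globally since $\|u\|_{W^{1,1}(\check H^2)}$ must be bounded anyway as part of \eqref{OptContrl}. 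One small slip: $c(\cdot,0)\notin\check H^2$ in general, because the spatially constant max-term does not vanish on $\p\Omega$ and $\check H^2=H^2\cap H^1_0$; the correct statement --- which your own parenthetical already supplies, and which the paper uses inside the proof of Theorem \ref{thm:forward2} --- is $c(\cdot,0)\in H^2\subset H^{2-2\gamma}=\check H^{2-2\gamma}$. This does not affect the validity of your argument.
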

\begin{proof}
  The well-posedness proof follows from \cite[Theorem 4.3]{ZheWanSICON} and is thus omitted due to similarity.
By Theorem \ref{thm:forward}, we obtain  $u\in H^1(L^2)\cap L^2(\check H^2)$. Then we apply Theorem \ref{Cor:Adj} to conclude that the adjoint  equation \eqref{AdjEq} has a unique solution $z \in H^1(L^2)\cap L^2(\check H^2)$ with the regularity  estimates \eqref{Cor:pdestab}--\eqref{Cor:pde:est1}. We combine these estimates with \eqref{thm:forward:e1} to obtain
\begin{equation}\label{zt1}\begin{array}{l}
\ds \|z\|_{H^1(L^2)} + \|z\|_{ L^2(\check H^2)} + \|z\|_{H^1(\check H^2)} \le Q( \|q+c\|_{L^2(L^2)} + \|u_d\|_{L^2(\check H^2)}).
    \end{array}
\end{equation}
We combine \eqref{thm:forward:e1} and \eqref{thm:forward:e2}  to bound\vspace{-0.1in}
\begin{equation}\label{uT}\begin{array}{l}
\ds \|u(\cdot, T)\|_{\check H^{2-2\gamma}} \le \|u\|_{W^{1,1}({\check H^2})}\leq Q\Big(\|u\|_{L^1(\check H^2)}+\int_0^T \!\! t^{-\vartheta} \big\|t^{\vartheta}\p_{t}u(\cdot,t)\big\|_{\check H^2}dt\Big)\\[0.1in]
\ds \qquad \leq Q \big(\|q+c\|_{H^1(L^2)} + \|u_d\|_{L^2(\check H^2)} + \|q(\cdot,0)\|_{\check H^{2-2\gamma}}),
\end{array}
\end{equation}
which reformulates  \eqref{Cor:pde:est2}  as follows
\begin{equation}\label{zt2}\begin{array}{l}
 \hspace{-0.175in}\ds   \big\|(T-t)^{\vartheta} \p_t^2 z \big\|_{L^2(L^2)}   \leq Q (\|u-u_d\|_{H^1(L^2)} + \|u(\cdot, T) \|_{\check H^{2-2\gamma}}+ \|u_d(\cdot, T) \|_{\check H^{2-2\gamma}})\\[0.1in]
  \hspace{-0.175in}\ds \ds \!\leq\! Q (\|q\!+\!c\|_{H^1(L^2)} \!+\!\|u_d\|_{H^1(L^2)}\!+ \! \|u_d\|_{L^2(\check H^2)} \!+\! \|q(\cdot, 0) \|_{\check H^{2-2\gamma}} \!+\! \|u_d(\cdot, T) \|_{\check H^{2-2\gamma}}).
\end{array}
\end{equation}
To bound $c$,
we use \eqref{VarC} to obtain $\| c\|_{L^2} \le Q \|z\|_{L^2}$ and $\Delta c = - \Delta z/\kappa$, which, combines \eqref{zt1}, gives
\begin{equation}\label{cxx}
\ds \hspace{-0.1in} \ds\|c\|_{L^2(H^2)} \le Q \|z\|_{L^2(\check
 H^2)}\le Q\big(\|u_d\|_{L^2(\check H^2)}+\|q\|_{L^2(L^2)}+\|c\|_{L^2(L^2)}\big).
\end{equation}
Apply the truncation property of the first order Sobolev space \cite{ZheWanSICON} to \eqref{VarC} to get
$$\begin{array}{rl}
\ds \kappa \p_t c(\bm x,t) & \hspace{-0.1in} \ds = -\p_t z(\bm x,t) + \p_t \max \bigg\{0,\frac{1}{|\Omega|}\int_\Omega z(\bm x,t) d\bm x \bigg \}  = -\p_t z(\bm x,t) \\[0.1in]
\ds & \hspace{-0.1in} \ds + \left\{\begin{array}{ll}
\ds \frac{1}{|\Omega|}\int_\Omega \p_t z(\bm x,t) d\bm x, ~~&\ds \text{a.e. } \bigg \{t \in [0,T]; \int_\Omega z(\bm x,t) d\bm x >0 \bigg \},\\[0.1in]
\ds 0, &\ds \text{a.e. } \bigg \{t\in [0,T]; \int_\Omega z(\bm x,t) d\bm x \leq 0 \bigg \}.
\end{array}
\right. \end{array}$$
We thus arrive at $\|c\|_{H^1(L^2)} \le Q \|z\|_{H^1(L^2)}$, which, together with \eqref{cxx} and \eqref{zt1}, proves the bound for $c$ in \eqref{OptContrl}. We invoke this estimate with \eqref{zt1} and \eqref{zt2} to prove  the estimate for $z$ in  \eqref{OptContrl}.
We invoke \eqref{thm:forward:e2}, \eqref{u0}, and  \eqref{uT} with   the estimate for $c$ in \eqref{OptContrl} to obtain the first estimate in \eqref{OptContrl}. We thus complete the proof.
\end{proof}

\section{Numerical approximation}\label{S:Num}
We derive the numerical approximation to the equivalent formulation (\ref{Model}) of the state equation, which takes the form as \begin{equation}\label{qwl02}
^c\p_t^{\alpha_0} u(\bm x,t) -  \Delta u(\bm x,t)  = (g^\prime * \Delta u) (\bm x,t)  +  F(\bm x,t), \quad F:=  I_t^{1-\alpha_0} (q+c),
\end{equation}
 and accordingly prove its error estimate. Here we assume that $c$ is given a priori, and focus the attention on the error of $u$, which thus reflects the numerical error from discretizing the equation. The equivalent formulation (\ref{AdjEq1}) of the forward-in-time analogue of the adjoint equation \eqref{AdjEq} has exactly the same form as (\ref{qwl02}) (with different forcing terms) and thus we omit the details for its numerical approximation.
\subsection{Time-discrete scheme}
Let $N$ be a positive integer and $\tau=T/N$ be the uniform temporal step size with $t_n=n \tau$ for $0\leq n \leq N$. Let $u^n:=u(\bm x,t_n)$ and ${F}^n : ={F}(\bm x,t_n)$. Discrete $^c\p_t^{\alpha_0} u$ and $(g^\prime * \Delta u)$ at $t = t_n$ by
\begin{equation}\begin{array}{l}\label{Num:e1}
  \ds ^c\p_t^{\alpha_0} u(\bm x,t)  = \sum_{k=1}^n \int_{t_{k-1}}^{t_{k}} \f{\p_s u(\bm x, s) ds}{\Gamma(1-\alpha_0)(t_n-s)^{\alpha_0}} = D_N^{\alpha_0 } u^n + (R_1)^n,\\[0.1in]
  \ds (g^\prime * \Delta u) (\bm x,t_n)   = \sum\limits_{j=1}^{n} \int_{t_{j-1}}^{t_j} g^{\prime}(s)\Delta u (\bm x,t_n-s) ds = \sum\limits_{j=1}^{n} w_j \Delta u^{n-j} + (R_2)^n,
  \end{array}
\end{equation}
where $D_N^{\alpha_0 } u^n$, $(R_1)^n$, $\{w_k\}_{k=1}^n$ and $(R_2)^n$ are defined by
\begin{equation}
\begin{array}{l}\label{Num:e2}
\ds  D_N^{\alpha_0 } u^n: =  \sum_{k=1}^n \int_{t_{k-1}}^{t_k} \f{(u^k-u^{k-1})/\tau ds}{\Gamma(1-\alpha_0)(t_n-s)^{\alpha_0}} = \sum_{k=1}^n  b_{n-k}(u^k-u^{k-1}) \\[0.1in]
 \hspace{0.375in} \ds  = b_{0}u^n+\sum_{k=1}^{n-1}(b_{n-k}-b_{n-k-1})u^k,\quad
  b_{j} \ds := \f{(t_{j+1})^{1-\alpha_0} - (t_j)^{1-\alpha_0}}{\Gamma(2-\alpha_0) \tau}, \vspace{-0.15in}
\end{array}
\end{equation}
\begin{align}
    (R_1)^n & :=  \sum\limits_{k=1}^{n} \int_{t_{k-1}}^{t_k} \frac{1}{\Gamma(1-\alpha_0)(t_n-s)^{\alpha_0}}\left[ \p_s u(\bm x,s) - \frac{u^k-u^{k-1}}{\tau}  \right] ds\label{L1Error} \\
    & =  \sum\limits_{k=1}^{n} \int_{t_{k-1}}^{t_k}\frac{(t_n-s)^{-\alpha_0}}{\tau\Gamma(1-\alpha_0)} \left[ \!\int_{t_{k-1}}^{s}\!\p_t^2 u(\bm x,t)(t\!-\!t_{k-1})dt \!- \!\!\int_{s}^{t_k}\!\p_t^2u(\bm x,t)(t_k\!-\!t) dt   \right]ds, \nonumber\vspace{-0.15in}
\end{align}
and \vspace{-0.1in}
\begin{align}
     w_k  = \int_{t_{k-1}}^{t_k} g'(s) ds,   \quad
   (R_2)^n  = \sum\limits_{j=1}^{n}\int_{t_{j-1}}^{t_j}g^\prime(s)   \int_{s}^{t_j} \p_t\Delta u(\bm x,t_n-\theta)d\theta  ds. \label{IError}\vspace{-0.2in}
\end{align}
We then substitute \eqref{Num:e1} and \eqref{Num:e2} into \eqref{qwl02} to get\vspace{-0.05in}
\begin{equation}\label{qwl010}
  \begin{split}
      D_N^{\alpha_0} u^n - \Delta u^n = \sum\limits_{j=1}^{n} w_j \Delta u^{n-j} + {F}^n + R^n, \quad n = 1,2, \cdots, N
  \end{split}
\end{equation}
with  $R^n=(R_1)^n+(R_2)^n$. Omit the truncation error $R^n$ and replace $u^n$ by its numerical approximation  $U^n$  to get the temporal semi-discrete scheme with $U^0 = 0$
\begin{equation} \label{qwl011}
     \ds D_N^{\alpha_0 } U^n - \Delta U^n = \sum\limits_{j=1}^{n} w_j \Delta U^{n-j} + {F}^n, \quad n = 1,2, \cdots, N.
\end{equation}

\subsection{Analysis of time-discrete scheme}
We  introduce the discrete convolution kernel \cite{LiLiZha} to prove the stability and convergence results of the temporal semi-discrete scheme  \eqref{qwl011}. By \cite{LiLiZha}, we recursively  define the  convolutional coefficients   $\{P_{n-k}\}_{k=1}^{n}$ by
  $P_0 =1$ and $P_{n-k}   = \sum_{j=k+1}^n(b_{j-k-1}-b_{j-k}) P_{n-j}$ for $1 \leq k \leq n-1 $, which have the following properties.
\begin{lemma} \cite[Lemma 2.1]{LiLiZha}\label{liao1}
The discrete coefficients  $\{P_{n-k}\}_{k=1}^{n}$ satisfy
$0<P_{n-k} \leq \Gamma(2-\alpha_0) \tau^{\alpha_0}$ for   $1 \leq k \leq n$. In addition, the following two relations hold: \par
 \text{(i)} $\sum_{j=k}^n P_{n-j}  b_{j-k} =1$  for $1 \leq k \leq n \le N$. \par
 \text{(ii)}
  $\sum_{j=1}^n P_{n-j} \beta_{1+m \alpha_0-\alpha_0} (t_j) \leq \beta_{1+m \alpha_0} (t_n )$ for $1 \le n \le N$ and $m=0,1$.
\end{lemma}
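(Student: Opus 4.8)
The plan is to dispose of the three claims in sequence, using throughout that the recursion defining $\{P_{n-k}\}$ makes it the discrete convolution inverse of the kernel $\{b_j\}$ in \eqref{Num:e2}. I would begin by recording the structural properties of $\{b_j\}$. Since $0<1-\alpha_0<1$, the function $s\mapsto s^{1-\alpha_0}$ is strictly increasing and concave, so its forward differences $t_{j+1}^{1-\alpha_0}-t_j^{1-\alpha_0}$ are positive and decreasing in $j$; dividing by $\Gamma(2-\alpha_0)\tau$ gives $b_0>b_1>\cdots>0$, and in particular $b_0=\tau^{-\alpha_0}/\Gamma(2-\alpha_0)$. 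Moreover, since the third difference of $s^{1-\alpha_0}$ is positive, $\{b_j\}$ is convex. Hence every weight $b_{j-k-1}-b_{j-k}$ in the recursion is strictly positive, and the positivity $P_{n-k}>0$ follows by induction on the gap $n-k$: the base case $P_0>0$ is immediate, and the inductive step expresses $P_{n-k}$ as a sum of products of positive weights with values $P_{n-j}$ of strictly smaller gap.

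For claim (i) and the upper bound I would first establish the convolution identity $\sum_{j=k}^n P_{n-j}b_{j-k}=1$. Writing $b_\ell=\sum_{m=0}^\ell a_m$ with $a_0=b_0$ and $a_m=b_m-b_{m-1}$ for $m\ge 1$, the recursion is precisely the statement that $\{P\}$ inverts the triangular convolution by $\{a_m\}$; a discrete summation-by-parts then upgrades this to the identity in terms of $\{b_j\}$, proved by induction on $n-k$. Granting (i), the upper bound is immediate: by the positivity just shown all summands of $\sum_{j=k}^n P_{n-j}b_{j-k}=1$ are nonnegative, so retaining only the $j=k$ term gives $b_0\,P_{n-k}\le 1$, i.e. $P_{n-k}\le 1/b_0=\Gamma(2-\alpha_0)\tau^{\alpha_0}$.

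Claim (ii) is the substantive part and the step I expect to be the main obstacle. Its continuous counterpart is the exact identity $I_t^{\alpha_0}\beta_{1+m\alpha_0-\alpha_0}=\beta_{1+m\alpha_0}$, valid for both $m=0,1$ by the Abel semigroup law $\beta_{\alpha_0}*\beta_\nu=\beta_{\alpha_0+\nu}$. Since $\{b_j\}$ is the L1 discretization of the $\alpha_0$-order derivative, whose symbol is $\beta_{1-\alpha_0}$, its convolution inverse $\{P_{n-j}\}$ plays the role of the fractional-integration kernel $\beta_{\alpha_0}$, so one expects the discrete convolution $\sum_{j=1}^n P_{n-j}\beta_{1+m\alpha_0-\alpha_0}(t_j)$ to be dominated by $\beta_{1+m\alpha_0}(t_n)$. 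I would prove this by induction on $n$, using (i) to reorganize the target sum and then comparing it with the integral $\int_0^{t_n}$ of the relevant power. The delicate point is the sign of this comparison: it reduces to a one-sided truncation-error estimate for the piecewise-constant (L1) quadrature applied to the monotone integrand near the singularity $s\to 0^+$, where $\beta_{1-\alpha_0}$ and the power $t^{-\alpha_0}$ blow up, and it is exactly here that the monotonicity and convexity of $\{b_j\}$ must be invoked carefully to keep the inequality pointing in the correct direction.

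Since the statement is \cite[Lemma 2.1]{LiLiZha}, in the paper it suffices to cite the reference; the route above indicates how it can be reconstructed in a self-contained way, with the positivity and the convolution identity being elementary and the one-sided estimate (ii) carrying the analytic weight.
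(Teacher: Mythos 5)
The paper contains no proof of this lemma at all: it is quoted from \cite[Lemma 2.1]{LiLiZha} (specialized to the uniform mesh), so the citation you mention at the end is the paper's entire argument. Your reconstruction of the elementary parts is correct and standard: $b_0>b_1>\cdots>0$ from concavity of $s^{1-\alpha_0}$, positivity of $P_{n-k}$ by induction on the gap, the identity (i) via the convolution-inverse viewpoint with summation by parts, and the bound $P_{n-k}\le 1/b_0=\Gamma(2-\alpha_0)\tau^{\alpha_0}$ by retaining only the $j=k$ term in (i). One caveat you silently pass over: as printed in the paper, the recursion with $P_0=1$ and $P_{n-k}=\sum_{j=k+1}^n(b_{j-k-1}-b_{j-k})P_{n-j}$ lacks the normalization of \cite{LiLiZha} (there $P_0=1/b_0$ and $b_0$ multiplies the left side of the recursion); with the printed version, (i) already fails at $k=n$ since $P_0b_0=b_0\ne 1$. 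Your argument implicitly uses the correct normalization, which is fine as a reconstruction of the cited result, but the discrepancy deserved a remark.

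The genuine gap is (ii), which you yourself flag as the main obstacle but then only sketch, and the sketch points in a slightly wrong direction: the known proof uses no induction on $n$, and convexity of $\{b_j\}$ plays no role anywhere in this lemma. The mechanism is a cell-wise one-sided comparison followed by an exchange of summation through (i). Since $\tau b_{j-k}=\int_{t_{k-1}}^{t_k}\beta_{1-\alpha_0}(t_j-s)\,ds$ and the integrand is increasing in $s$, for $m=0$ one has $b_{j-1}\ge\beta_{1-\alpha_0}(t_j)$, whence $\sum_{j=1}^n P_{n-j}\beta_{1-\alpha_0}(t_j)\le\sum_{j=1}^n P_{n-j}b_{j-1}=1=\beta_1(t_n)$ by (i) with $k=1$. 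For $m=1$, set $\psi_k:=\int_{t_{k-1}}^{t_k}\beta_{\alpha_0}(s)\,ds$; on each cell $\beta_{1-\alpha_0}(t_j-s)$ is increasing and $\beta_{\alpha_0}(s)$ decreasing, so Chebyshev's integral inequality gives $\int_{t_{k-1}}^{t_k}\beta_{1-\alpha_0}(t_j-s)\beta_{\alpha_0}(s)\,ds\le b_{j-k}\psi_k$, and summing over $k$ and using the exact semigroup identity $\beta_{1-\alpha_0}*\beta_{\alpha_0}=\beta_1\equiv 1$ yields $1\le\sum_{k=1}^j b_{j-k}\psi_k$; therefore $\sum_{j=1}^n P_{n-j}\le\sum_{k=1}^n\psi_k\sum_{j=k}^n P_{n-j}b_{j-k}=\sum_{k=1}^n\psi_k=\beta_{1+\alpha_0}(t_n)$. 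This pointwise domination of $\beta_{1+m\alpha_0-\alpha_0}(t_j)$ by a $b$-convolution, which (i) then collapses exactly, is the missing idea; the ``one-sided truncation-error estimate for the L1 quadrature near the singularity'' you gesture at has no intrinsic sign on its own, and it is not evident how an induction on $n$ would close without precisely this comparison.
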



\begin{theorem}\label{thm4.1}
  Let $U^n$  be the solution of the scheme \eqref{qwl011}. The following stability estimate holds
     \begin{align}
\|U^m\| \leq Q\big( \sup\limits_{0\leq t\leq T}\|q(\cdot,t)\|+\sup\limits_{0\leq t\leq T}\|c(\cdot,t)\| \big), \quad  1\leq m \leq N. \nonumber
    \end{align}
\end{theorem}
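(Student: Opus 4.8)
The plan is to diagonalize \eqref{qwl011} in the eigenbasis $\{\phi_i\}$ and reduce the stability bound to an estimate for a family of scalar discrete equations, for which the discrete convolution kernels of Lemma \ref{liao1} are tailored. Writing $U^n=\sum_i U_i^n\phi_i$ and $F^n=\sum_i F_i^n\phi_i$ with $F_i^n=(F^n,\phi_i)$, and using $-\Delta\phi_i=\lambda_i\phi_i$, the scheme \eqref{qwl011} is equivalent to the scalar recursions
\[ D_N^{\alpha_0}U_i^n+\lambda_i U_i^n=-\lambda_i\sum_{j=1}^n w_j U_i^{n-j}+F_i^n,\qquad U_i^0=0,\quad i\ge1. \]
Since $F^n=I_t^{1-\alpha_0}(q+c)(\cdot,t_n)$, the Abel-kernel estimate gives $\|F^n\|\le \f{t_n^{1-\alpha_0}}{\Gamma(2-\alpha_0)}\sup_{t}\|(q+c)(\cdot,t)\|\le Q(\sup_t\|q\|+\sup_t\|c\|)$, so it suffices to prove $\|U^m\|\le Q\max_{1\le n\le N}\|F^n\|$ with $Q$ independent of $m$, $N$, and of all $\lambda_i$, and then to sum $\|U^m\|^2=\sum_i|U_i^m|^2$.

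First I would invert the discrete Caputo operator. Convolving the scalar recursion with $P_{m-n}$, summing over $1\le n\le m$, and using Lemma \ref{liao1}(i) (which makes $\{P_k\}$ the exact inverse of $\{b_k\}$) yields $U_i^m=\sum_{n=1}^m P_{m-n}\,D_N^{\alpha_0}U_i^n$, and hence the discrete Volterra representation $U_i^m=\sum_{n=1}^m\mathcal E^{(i)}_{m-n}r_i^n$ with $r_i^n:=-\lambda_i\sum_{j}w_jU_i^{n-j}+F_i^n$, where $\{\mathcal E^{(i)}_k\}$ is the convolution inverse of $\{b_k+\lambda_i\delta_{k0}\}$, i.e. the discrete analogue of the Mittag--Leffler kernel $t^{\alpha_0-1}E_{\alpha_0,\alpha_0}(-\lambda_i t^{\alpha_0})$ from the proof of Theorem \ref{thm:ode}. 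Because $\{b_k\}$ is positive and decreasing and $\lambda_i>0$, the scheme is an M-matrix, so a discrete comparison principle gives $0\le\mathcal E^{(i)}_k\le P_k$ uniformly in $i$; solving instead with forcings $\equiv\lambda_i$ and $\equiv1$ and invoking Lemma \ref{liao1}(ii) then yields the two crucial mass bounds $\lambda_i\sum_{k}\mathcal E^{(i)}_k\le1$ and $\sum_{k}\mathcal E^{(i)}_k\le\beta_{1+\alpha_0}(t_m)\le Q$.

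The hard part is the memory term $-\lambda_i\sum_j w_jU_i^{n-j}$: by design the weights $w_j=\int_{t_{j-1}}^{t_j}g'(s)\,ds$ inherit the indefinite sign, non-positive-definiteness, and non-monotonicity of $g'$, so neither an energy nor a bare maximum-principle argument closes, and $\sum_j|w_j|$ is merely bounded rather than small. My plan is to exploit the regularity \eqref{g:est}: $|w_j|\le\int_{t_{j-1}}^{t_j}(|\ln s|+1)\,ds\le Q\int_{t_{j-1}}^{t_j}s^{-\hat\alpha}\,ds$ for any $\hat\alpha\in(0,1)$, so the memory is governed by a weakly singular kernel. Taking absolute values in the Volterra representation, using $\mathcal E^{(i)}\ge0$ and $\lambda_i\sum_k\mathcal E^{(i)}_k\le1$ to absorb the factor $\lambda_i$, and then applying a discrete fractional Gr\"onwall inequality in the discrete-kernel spirit of \cite{QiuZhe,LiLiZha} — or, equivalently, fixing $\delta$ with $\int_0^\delta|g'|\le\tfrac12$ to get a contraction on $[0,\delta]$ and stepping forward over the finitely many subintervals covering $[0,T]$ — yields the clean bound $|U_i^m|\le Q\sum_{n=1}^m\mathcal E^{(i)}_{m-n}|F_i^n|$ with $Q$ independent of $i$, $m$, $N$.

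Finally I would pass from modes to the $L^2$ norm. By Cauchy--Schwarz together with $\sum_n\mathcal E^{(i)}_{m-n}\le Q$, one has $|U_i^m|^2\le Q\sum_n\mathcal E^{(i)}_{m-n}|F_i^n|^2$; summing over $i$ and using the $i$-uniform domination $\mathcal E^{(i)}_{m-n}\le P_{m-n}$ gives $\|U^m\|^2=\sum_i|U_i^m|^2\le Q\sum_{n=1}^m P_{m-n}\|F^n\|^2$. Bounding $\|F^n\|^2\le\max_n\|F^n\|^2$ and applying Lemma \ref{liao1}(ii) with $m=1$ (so $\sum_n P_{m-n}\le\beta_{1+\alpha_0}(t_m)\le Q$) gives $\|U^m\|\le Q\max_n\|F^n\|$, which combined with the forcing estimate completes the proof. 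I expect the memory-term Gr\"onwall step to be the main obstacle, precisely because $g'$ lacks the positivity and monotonicity that the classical induction argument of \cite{Stynes} relies on; the domination $\mathcal E^{(i)}_k\le P_k$ is what makes the per-mode estimates summable with an $i$-independent constant.
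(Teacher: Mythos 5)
Your preparatory steps are all sound: the eigenfunction diagonalization, the inversion identity $U_i^m=\sum_{n=1}^m P_{m-n}D_N^{\alpha_0}U_i^n$ from Lemma \ref{liao1}(i), the M-matrix comparison giving $0\le \mathcal E^{(i)}_k\le P_k$, and the two mass bounds. The genuine gap is the memory step: the ``clean bound'' $|U_i^m|\le Q\sum_{n=1}^m\mathcal E^{(i)}_{m-n}|F_i^n|$ with $Q$ independent of $i$ is \emph{false}, and the block-contraction argument you present as ``equivalent'' in fact proves only the weaker running-maximum form $|U_i^m|\le Q\max_{m'\le m}\sum_n\mathcal E^{(i)}_{m'-n}|F_i^n|$ (each block bounds the recent memory by $\tfrac12$ times the running maximum, so maxima never disappear from the estimate). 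To see the pointwise form fail, run the continuum analogue with $e_\lambda(t)=t^{\alpha_0-1}E_{\alpha_0,\alpha_0}(-\lambda t^{\alpha_0})$ and a forcing spike at $t=0$: the first memory iterate is $\lambda\, e_\lambda*(g'*e_\lambda)$, and since $\lambda e_\lambda$ is an approximate identity of mass $\le 1$ concentrated at scale $\lambda^{-1/\alpha_0}$, this iterate has size $\approx |g'(t)|/\lambda$ at fixed $t$, whereas the known asymptotics $E_{\alpha_0,\alpha_0}(-x)\simeq x^{-2}/|\Gamma(-\alpha_0)|$ give $e_\lambda(t)\simeq \lambda^{-2}t^{-1-\alpha_0}$; the ratio $\simeq \lambda t^{1+\alpha_0}|g'(t)|$ is unbounded in $\lambda$. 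The memory genuinely fattens the resolvent tail, so no $i$-uniform pointwise domination by $\mathcal E^{(i)}*|F_i|$ exists. This matters because your final reassembly needs pointwise-in-$m$ control: with only the max form, after $\mathcal E^{(i)}\le P$ you must interchange the time-maximum with the sum over modes, and $\sum_i(\max_n|F_i^n|)^2$ is \emph{not} controlled by $\max_n\|F^n\|^2$ (take modes peaking on disjoint time intervals, $q=\sum_i\mathbf 1_{I_i}(t)\phi_i$, for which $\sup_t\|q(\cdot,t)\|=1$ but $\sum_i\sup_t|q_i(t)|^2=\infty$).

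The route can be repaired along your first alternative, but it needs an ingredient you do not supply: prove the $i$-uniform kernel bound $\lambda_i\sum_{n=k+1}^m\mathcal E^{(i)}_{m-n}|w_{n-k}|\le Q\,\mu_{\varepsilon,m-k}$, which requires the discrete resolvent decay $\lambda_i\mathcal E^{(i)}_j\le Q\tau/t_{j+1}$ (the discrete analogue of $\lambda e_\lambda(t)\le Qt^{-1}$) — an estimate not contained in Lemma \ref{liao1} — and then apply the weakly singular Gr\"onwall inequality of \cite[Lemma 6]{Chen} per mode. Its conclusion $|U_i^m|\le c_{i,m}+Q\sum_k\mu_{\varepsilon,m-k}\,c_{i,k}$ with $c_{i,m}:=\sum_n\mathcal E^{(i)}_{m-n}|F_i^n|$ \emph{is} pointwise in $m$ (note it is not of your claimed form: the correction carries the $\mu$-kernel), and Cauchy--Schwarz plus $\mathcal E^{(i)}\le P$, $\sum_k\mu_{\varepsilon,k}\le Q$ then sums correctly over modes. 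For comparison, the paper avoids all of this: following \cite[Theorem 1]{QiuZhe} (the pattern is visible in the proof of Theorem \ref{thm4.2}), one tests \eqref{qwl011} with $U^n$, convolves the resulting energy inequality with $P_{m-n}$, bounds $|w_n|\le Q\mu_{\varepsilon,n}$, and applies the same Gr\"onwall of \cite{Chen} directly at the level of the full norms $\|U^n\|^2+\sum_j P_{n-j}\|\nabla U^j\|^2$, so the mode-wise reassembly problem never arises.
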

\begin{proof}
The proof can be performed by the same manner as that of \cite[Theorem 1]{QiuZhe} and is thus omitted.
\end{proof}

Denote $\rho^n = u^n - U^n$ for $n=0$, $1$, $2$, $\cdots$, $N$. We subtract \eqref{qwl011} from \eqref{qwl010} to obtain the following error equations for  $\rho^0 = 0$ and $n = 1, 2, \cdots, N$ \vspace{-0.05in}
\begin{align}
    & D_N^{\alpha_0 } \rho^n - \Delta \rho^n = \sum\limits_{j=1}^{n} w_j \Delta \rho^{n-j} + R^n. \vspace{-0.15in}\label{yq03}
\end{align}

The truncation errors of the scheme (\ref{qwl011}) under graded meshes are analyzed in \cite{LiLiZha,QiuZhe} based on certain solution regularity conditions. In this work, the  truncation errors are re-estimated based on the enhanced solution regularity, which thus relaxes the mesh condition from the graded mesh to the uniform mesh without deteriorating the accuracy.

\begin{lemma}\label{liao2} Assume that $c \in L^2(L^2)$, $q \in H^1(L^2)$, $u_d \in L^2(\check H^2) \cap H^1(L^2)$, and $q(\bm x, 0), u_d(\bm x, T)\in \check H^{2-2\gamma}$  with $\gamma$ defined in \eqref{theta}, we have
\begin{align}
    \sum_{n=1}^m P_{m-n} \|(R_1)^n\| \le Q  \tau, \quad  \sum_{n=1}^m P_{m-n} \|(R_2)^n\| \le Q \ell_{\tau} \tau\label{lem4.2}
\end{align}
 with $1\leq m \leq N$ and $\ell_{\tau} := 1+|\ln\tau|$.
\end{lemma}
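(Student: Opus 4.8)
The plan is to bound each truncation error pointwise in the $L^2(\Omega)$ norm, to collapse the weighted sums $\sum_{n=1}^m P_{m-n}\|(R_i)^n\|$ by means of the two summation identities in Lemma \ref{liao1}, and finally to feed in the weighted solution regularity coming from the optimal-control estimate \eqref{OptContrl} of Theorem \ref{thm:OptContrl}. Under the stated hypotheses (note that $c$ need only lie in $L^2(L^2)$ because its higher smoothness is recovered through the adjoint) that estimate furnishes $\|t^{\vartheta}\p_t^2 u\|_{L^2(L^2)}\le Q$ and $\|t^{\vartheta}\p_t u\|_{L^2(\check{H}^2)}=\|t^{\vartheta}\p_t\Delta u\|_{L^2(L^2)}\le Q$ with $\vartheta<\f{1}{2}$. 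The point I would exploit repeatedly is that $\vartheta<\f{1}{2}$ makes $t^{-2\vartheta}$ integrable, so that by Cauchy--Schwarz
\[
\int_0^{t_m}\|\p_t^2 u(\cdot,t)\|\,dt\le\Big(\int_0^T t^{-2\vartheta}\,dt\Big)^{1/2}\|t^{\vartheta}\p_t^2 u\|_{L^2(L^2)}\le Q,
\]
and likewise $\int_0^{t_m}\|\p_t\Delta u(\cdot,t)\|\,dt\le Q$; it is exactly this integrability that lets the argument run on the \emph{uniform} mesh without grading.

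For the first estimate I would start from \eqref{L1Error}, take $L^2(\Omega)$ norms inside the integrals, and use $(t-t_{k-1})\le\tau$ on $[t_{k-1},s]$ together with $(t_k-t)\le\tau$ on $[s,t_k]$. Recognizing $\int_{t_{k-1}}^{t_k}(t_n-s)^{-\alpha_0}/(\tau\Gamma(1-\alpha_0))\,ds=b_{n-k}$ from \eqref{Num:e2}, this gives $\|(R_1)^n\|\le\sum_{k=1}^n b_{n-k}\,\tau\int_{t_{k-1}}^{t_k}\|\p_t^2 u(\cdot,t)\|\,dt$. Inserting this into the weighted sum and interchanging the order of summation turns the inner sum into $\sum_{n=k}^m P_{m-n}b_{n-k}=1$ by Lemma \ref{liao1}(i), so the whole expression telescopes to $\tau\int_0^{t_m}\|\p_t^2 u(\cdot,t)\|\,dt\le Q\tau$ by the Cauchy--Schwarz bound above, which is the first estimate in \eqref{lem4.2}.

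For the second estimate I would argue analogously from \eqref{IError}: taking norms and bounding the inner $\theta$-integral by the full-cell integral, then substituting $r=t_n-\theta$, yields $\|(R_2)^n\|\le\sum_{j=1}^n\tilde w_j\,G_{n-j}$, where $\tilde w_j:=\int_{t_{j-1}}^{t_j}|g'(s)|\,ds$ and $G_i:=\int_{t_i}^{t_{i+1}}\|\p_t\Delta u(\cdot,r)\|\,dr$. Interchanging summation reduces the weighted sum to $\sum_{i=0}^{m-1}G_i\big(\sum_{p=1}^{M}P_{M-p}\tilde w_p\big)$ with $M:=m-i$, so it remains to control the inner convolution uniformly in $i$. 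The key comparison is
\[
\tilde w_p\le Q\,\tau\,\ell_{\tau}\,\beta_{1-\alpha_0}(t_p),\qquad 1\le p\le N,
\]
obtained from $|g'(t)|\le Q(|\ln t|+1)$ in \eqref{g:est}: the cell $p=1$ contributes $\int_0^\tau(|\ln s|+1)\,ds\le Q\tau\ell_\tau$, while for $p\ge2$ one checks $|\ln t_{p-1}|+1\le Q\,\ell_\tau\,t_p^{-\alpha_0}$ by separating the ranges $t_p\lesssim1$ and $t_p\gtrsim1$. Feeding this into the $m=0$ instance of Lemma \ref{liao1}(ii), namely $\sum_{p=1}^{M}P_{M-p}\beta_{1-\alpha_0}(t_p)\le\beta_1(t_M)=1$, gives $\sum_{p=1}^{M}P_{M-p}\tilde w_p\le Q\tau\ell_\tau$ uniformly in $i$. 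Since $\sum_{i=0}^{m-1}G_i=\int_0^{t_m}\|\p_t\Delta u\|\,dr\le Q$, the second estimate in \eqref{lem4.2} follows.

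I expect the $(R_2)$ step to be the main obstacle. Unlike the Abel-type weights arising in $(R_1)$, the memory kernel $g'$ is only logarithmically integrable near the origin, so the cell weights $\tilde w_p$ cannot be matched against $b$ directly; the delicate point is to dominate $\tilde w_p$ by $\beta_{1-\alpha_0}(t_p)$, which is the one growth profile the discrete kernel $P$ is guaranteed to absorb through Lemma \ref{liao1}(ii). It is precisely this comparison that introduces the extra factor $\ell_\tau=1+|\ln\tau|$ and thereby explains why only \emph{almost} first-order accuracy is attained on the uniform mesh. Finally, the identical scheme applies to the adjoint equation via the forward-in-time reformulation \eqref{AdjEq2}, now with the weight $(T-t)^{\vartheta}$ and the adjoint regularity of Theorem \ref{Cor:Adj} (whose bound $\|z\|_{H^1(\check H^2)}\le Q$ even renders the $(R_2)$ part unweighted), so the corresponding details may be omitted.
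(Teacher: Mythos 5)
Your proof is correct, and for the $(R_1)$ bound it takes a genuinely different and more economical route than the paper. The paper isolates the $n=1$ term, rewrites $(R_1)^n$ for $n\ge 2$ as the linear-interpolation error of $g(s)=(t_n-s)^{1-\alpha_0}$, bounds it through $g''(\zeta_k)$ by $\tau^2(t_n-\zeta_k)^{-\alpha_0-1}$, and then absorbs the resulting singular weights via $\sum_{n=k+1}^{m}P_{m-n}(t_n-t_k)^{-\alpha_0}\le Q$, a consequence of Lemma \ref{liao1}(ii). You instead dominate the bracket in \eqref{L1Error} crudely by $\tau\int_{t_{k-1}}^{t_k}\|\p_t^2u(\cdot,t)\|\,dt$, recognize the remaining factor as exactly $b_{n-k}$, and collapse the double sum with the identity $\sum_{n=k}^{m}P_{m-n}b_{n-k}=1$ of Lemma \ref{liao1}(i), landing directly on $\tau\|\p_t^2u\|_{L^1(L^2)}\le Q\tau$ with no case splitting and no second derivative of the kernel; since the target is only first order and the regularity available is $t^{\vartheta}\p_t^2u\in L^2(L^2)$ with $\vartheta<\tfrac12$ (so $\p_t^2u\in L^1(L^2)$ by Theorem \ref{thm:OptContrl}, which is exactly the bound the paper also invokes), nothing is lost, and the paper's finer interpolation-error decomposition would only pay off if one wanted to exhibit sharper-than-first-order structure away from $t=0$. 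For $(R_2)$ your argument is the paper's in different packaging: the paper treats the first cell separately (via $|g'(s)|\le Qs^{-\varepsilon}$ together with $P_{m-n}\le Q\tau^{\alpha_0}$) and handles the cells $j\ge 2$ with $\int_{t_{j-1}}^{t_j}|g'(s)|\,ds\le Q\tau\ell_{\tau}$ plus Lemma \ref{liao1}(ii) with $m=1$, whereas you unify both cases through the single domination $\tilde w_p\le Q\tau\ell_{\tau}\beta_{1-\alpha_0}(t_p)$ absorbed by Lemma \ref{liao1}(ii) with $m=0$ (which is within the lemma's stated range); the outcomes are equivalent, and your closing diagnosis --- that the logarithmic blow-up of $g'$ at the origin is precisely what forces the factor $\ell_{\tau}$ on the uniform mesh --- matches the mechanism in the paper's estimates \eqref{R21}--\eqref{R22}.
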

\begin{proof} We split the left-hand side term of the first estimate in \eqref{lem4.2} into two parts
$$\sum_{n=1}^m P_{m-n} \|(R_1)^n\|=  P_{m-1} \|(R_1)^1\| + \sum_{n=2}^m P_{m-n} \|(R_1)^n\|,$$
and aim to prove that each term on its right-hand side can be bounded by $Q \tau$.

  For $n=1$, we first use \eqref{L1Error} to get
    \begin{align*}
          (R_1)^1 =  \int_{0}^{\tau} \frac{(\tau-s)^{-\alpha_0}}{\tau\Gamma(1-\alpha_0)}\left[ \int_{0}^{s}\p_t^2 u(\bm x,t)t dt - \int_{s}^{\tau}\p_t^2 u(\bm x,t) (\tau-t) dt   \right]ds.
    \end{align*}
We combine the assumptions of the lemma, Lemma \ref{liao1} and Theorem \ref{thm:OptContrl} to obtain
\begin{align}
  \hspace{-0.1in}  P_{m-1} \|(R_1)^1\| & \leq Q\tau^{\alpha_0} \int_0^{\tau} \frac{ds}{(\tau-s)^{\alpha_0}} \int_0^{\tau} \|\p_t^2u(\cdot, t)\|dt  \leq Q\tau\|\p_t^2u\|_{L^1(L^2)} \leq Q \tau. \label{PP1}
\end{align}

For $n\geq 2$, we  interchange the order of integration to rewrite \eqref{L1Error} as
\begin{align*}
    (R_1)^n = \frac{1}{\Gamma(2-\alpha_0)}  \sum_{k=1}^n \int_{t_{k-1}}^{t_k} \left\{ g(s) - \Big[ \frac{s-t_{k-1}}{\tau}g(t_{k}) + \frac{t_{k}-s}{\tau}g(t_{k-1}) \Big] \right\}  \p_s^2u(\bm x,s)ds,
\end{align*}
where $g(s) = (t_n-s)^{1-\alpha_0}$. By the linear interpolation, we have
\begin{align*}
    g(s) &- \Big[ \frac{s-t_{k-1}}{\tau}g(t_{k}) + \frac{t_{k}-s}{\tau}g(t_{k-1}) \Big] = \frac{1}{2} g''(\zeta_k) (s-t_k)(s-t_{k-1}) \\
    & \leq \frac{\alpha_0(1-\alpha_0)}{2}\tau^2 (t_n-\zeta_k)^{-\alpha_0-1}, \quad \zeta_k\in (t_{k-1},t_k), \quad 1\leq k\leq n-1.
\end{align*}
Thus, we combine the above two formulas and $g(t_n)=0$ to obtain
\begin{align}
     \|(R_1)^n\| &  \leq \frac{1}{\Gamma(2-\alpha_0)}  \int_{t_{n-1}}^{t_n} \left| g(s) - \Big[ 0 + \frac{t_{n}-s}{\tau}g(t_{n-1}) \Big] \right|  \|\p_s^2u(\cdot, s)\|ds \label{nota1} \\
    & + \frac{\alpha_0}{2\Gamma(1-\alpha_0)} \tau^2   \sum_{k=1}^{n-1}  \int_{t_{k-1}}^{t_k} (t_n-\zeta_k)^{-\alpha_0-1} \|\p_s^2u(\cdot, s)\|ds  =:  \|(R_{11})^n\| + \|(R_{12})^n\|.  \nonumber
\end{align}
By Theorem \ref{thm:OptContrl} and Lemma \ref{liao1}, we combine \eqref{nota1} to obtain
    $$\sum\limits_{n=2}^{m} P_{m-n} \|(R_{11})^n\|  \leq Q\tau^{\alpha_0} \int_{t_{1}}^{t_N} (2\tau^{1-\alpha_0})  \|\p_s^2u(\cdot, s)\|ds  \leq Q\tau.$$
We interchange the order of summation, and invoke Lemma \ref{liao1} (ii) and Theorem \ref{thm:OptContrl} to bound the second term on the right-hand side of \eqref{nota1} as follows
\begin{align}
    \sum\limits_{n=2}^{m} P_{m-n} \|(R_{12})^n\| & \leq Q\tau^{2}  \sum\limits_{n=2}^{m} P_{m-n} \sum_{k=1}^{n-1}  (t_n-t_k)^{-\alpha_0-1} \int_{t_{k-1}}^{t_k}   \|\p_s^2u(\cdot, s)\|ds \nonumber \\
    &  \leq Q\tau  \sum\limits_{n=2}^{m} P_{m-n} \sum_{k=1}^{n-1}  (t_n-t_k)^{-\alpha_0} \Big(\int_{t_{k-1}}^{t_k}   \|\p_s^2u(\cdot, s)\|ds \Big) \nonumber \\
    &  = Q\tau \sum_{k=1}^{m-1} \Big(\int_{t_{k-1}}^{t_k}   \|\p_s^2u(\cdot, s)\|ds \Big) \sum\limits_{n=k+1}^{m} P_{m-n} (t_n-t_k)^{-\alpha_0}\nonumber \\
    & \leq Q\tau \sum_{k=1}^{m-1}\int_{t_{k-1}}^{t_k}   \|\p_s^2u(\cdot, s)\|ds\leq Q\tau \|\p_t^2u\|_{L^1(L^2)}\leq Q\tau, \label{nota3}
\end{align}
where we used $$\sum_{n=k+1}^{m} P_{m-n} (t_n-t_k)^{-\alpha_0}=\sum_{j=1}^{m-k} P_{(m-k)-j} (t_j)^{-\alpha_0}\leq Q.$$ We then use \eqref{nota1}--\eqref{nota3} to yield that
$\sum_{n=2}^m P_{m-n} \|(R_1)^n\| \le Q(\alpha_0, T)  \tau$.
By using this and \eqref{PP1}, we prove the first estimate in \eqref{lem4.2}.

To prove the second estimate  in \eqref{lem4.2}, we  apply \eqref{IError} to bound $(R_2)^n$ for $1 \le n \le m \le N$ by
\begin{align}
  \hspace{-0.1in}  \|(R_2)^n\| & \!\le\! \sum\limits_{j=1}^{n}\int_{t_{j-1}}^{t_j} |g^\prime(s)|   \int_{t_{j-1}}^{t_j} \|\p_t\Delta u(\cdot, t_n-t)\| dt  ds
      =: \|(R_{21})^n\| + \|(R_{22})^n\|.\label{qqq3}
\end{align}
where $\|(R_{21})^n\|  : = \int_{0}^{\tau} |g^\prime(s)|   \int_{0}^{\tau}\|\p_t\Delta u(\cdot, t_n-t)\| dt  ds.$
We first apply \eqref{g:est}, Theorem \ref{thm:OptContrl} and Lemma \ref{liao1} to get
\begin{align}
\sum\limits_{n=1}^{m}& P_{m-n}\|(R_{21})^n\| = \sum\limits_{n=1}^{m}P_{m-n} \int_{0}^{\tau} |g^\prime(s)|ds  \Big( \int_{t_{n-1}}^{t_n} \|\p_t\Delta u(\cdot, t)\| dt \Big) \nonumber\\
    & \leq Q\tau^{\alpha_0} \int_{0}^{\tau} s^{-\varepsilon} ds  \Big( \int_{0}^{t_{m}} \|\p_t\Delta u(\cdot, t)\| dt \Big)  \leq Q\tau^{\alpha_0+1-\varepsilon} \|\p_t u\|_{L^1(\check H^2)} \leq Q\tau. \label{R21}
\end{align}
We then use \eqref{g:est}, Theorem \ref{thm:OptContrl} and Lemma \ref{liao1} (ii) to obtain
\begin{align}
    \sum\limits_{n=1}^{m}& P_{m-n}\|(R_{22})^n\| = \sum\limits_{n=2}^{m}P_{m-n} \sum\limits_{j=2}^{n}\int_{t_{j-1}}^{t_j} |g^\prime(s)| ds \Big( \int_{t_{n-j}}^{t_{n-j+1}} \|\p_t\Delta u(\cdot, t)\| dt \Big) \nonumber \\\vspace{-0.1in}
    & \leq Q\tau\ell_{\tau}   \sum\limits_{n=2}^{m}P_{m-n} \int_0^{t_{n-1}} \|\p_t\Delta u(\cdot, t)\| dt \label{R22} \\\vspace{-0.2in}
    & \leq Q\tau\ell_{\tau} \|\p_t u\|_{L^1(\check H^2)}   \sum\limits_{n=2}^{m}P_{m-n} \beta_1(t_n)
   \le Q\tau\ell_{\tau},
     \nonumber\vspace{-0.2in}
\end{align}
where we used the fact that $
\int_{t_{j-1}}^{t_j} |g^\prime(s)| ds\leq Q\tau\ell_{\tau}
$ for $2\leq j\leq n$.
We combine \eqref{qqq3} with the estimates \eqref{R21}--\eqref{R22} to prove the second estimate in \eqref{lem4.2}, and we thus complete the proof.
\end{proof}

By Theorem \ref{thm4.1} and Lemma \ref{liao2}, we prove the following convergence result.

\begin{theorem}\label{thm4.2}
Assume that $c\in L^2(L^2)$, $q \in H^1(L^2)$, $u_d \in L^2(\check H^2) \cap H^1(L^2)$, and $q(\bm x, 0), u_d(\bm x, T)\in \check H^{2-2\gamma}$  with $\gamma$ defined in \eqref{theta}, it holds for \eqref{qwl011}
\begin{equation*}
    \|u-U\|_{\hat L^\infty(L^2)} \leq Q \ell_{\tau}\tau.
\end{equation*}
Here $\ds \|U\|_{\hat L^\infty(L^2)}: = \max_{1 \le n \le N} \|U^n\|$.
\end{theorem}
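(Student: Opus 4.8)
The plan is to treat the error equation \eqref{yq03} as an instance of the semi-discrete scheme \eqref{qwl011}: it has the identical left-hand structure $D_N^{\alpha_0}\rho^n - \Delta\rho^n - \sum_{j=1}^n w_j\Delta\rho^{n-j}$ together with the homogeneous initial value $\rho^0 = 0$, the only change being that the data $F^n$ is replaced by the consistency error $R^n = (R_1)^n + (R_2)^n$. Hence the same stability mechanism underlying Theorem \ref{thm4.1} (the discrete convolution kernel argument of \cite{QiuZhe}) applies, and the first and main task is to cast it as a forcing-dependent bound of the form
\begin{equation*}
\|\rho^m\| \le Q\sum_{n=1}^m P_{m-n}\|R^n\|, \qquad 1\le m\le N,
\end{equation*}
with $Q$ independent of $m$ and $\tau$.

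To obtain this estimate I would expand in the eigenbasis $\{\phi_i\}$, writing $\rho^n = \sum_i\rho_i^n\phi_i$ and $R^n = \sum_i R_i^n\phi_i$, which reduces \eqref{yq03} to the scalar recursions $D_N^{\alpha_0}\rho_i^n + \lambda_i\rho_i^n + \lambda_i\sum_{j=1}^n w_j\rho_i^{n-j} = R_i^n$ with $\rho_i^0=0$. Convolving each recursion with the complementary kernels $\{P_{m-n}\}$ and using Lemma \ref{liao1}(i) to invert the discrete Caputo operator, $\sum_{n=1}^m P_{m-n}D_N^{\alpha_0}\rho_i^n = \rho_i^m$, turns the recursion into a discrete integral identity for $\rho_i^m$. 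The Laplacian memory contributions are then controlled $\lambda_i$-uniformly by combining the decay relation of Lemma \ref{liao1}(ii) with the summability $\sum_{j\ge 1}|w_j| \le \int_0^T|g'(s)|ds \le Q$ guaranteed by \eqref{g:est}, and a discrete Gronwall inequality closes the scalar bound $|\rho_i^m| \le Q\sum_{n=1}^m P_{m-n}|R_i^n|$ with $Q$ independent of $\lambda_i$. Reassembling through Minkowski's inequality in $\ell^2$ over the modes $i$ then produces the displayed $P$-weighted estimate for $\|\rho^m\|$.

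With the stability estimate in hand, the convergence is immediate: by Lemma \ref{liao2},
\begin{equation*}
\sum_{n=1}^m P_{m-n}\|R^n\| \le \sum_{n=1}^m P_{m-n}\|(R_1)^n\| + \sum_{n=1}^m P_{m-n}\|(R_2)^n\| \le Q\tau + Q\ell_\tau\tau \le Q\ell_\tau\tau,
\end{equation*}
so that $\|\rho^m\| \le Q\ell_\tau\tau$ uniformly in $1\le m\le N$; taking the maximum over $m$ gives $\|u-U\|_{\hat L^\infty(L^2)} \le Q\ell_\tau\tau$. The solution regularity required to legitimately invoke Lemma \ref{liao2}, in particular the finiteness of $\|\p_t^2 u\|_{L^1(L^2)}$ and $\|\p_t u\|_{L^1(\check H^2)}$, is supplied by Theorem \ref{thm:OptContrl} under the stated hypotheses on $q$, $c$, and $u_d$.

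The main obstacle is the stability estimate itself, namely producing the $\lambda_i$-uniform scalar bound $|\rho_i^m|\le Q\sum_n P_{m-n}|R_i^n|$ in the presence of the memory kernel $\{w_j\}$ arising from $g'*\Delta u$, which is indefinite-sign, non-positive-definite, and non-monotonic. This is exactly the situation the complementary discrete kernel $\{P_{n-k}\}$ and the two relations in Lemma \ref{liao1} are designed to handle, and it is the heart of the argument in \cite{QiuZhe}; by comparison the eigen-reduction, the Minkowski reassembly, and the final substitution of Lemma \ref{liao2} are routine.
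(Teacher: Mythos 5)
Your overall architecture---a forcing-dependent stability bound for the error equation \eqref{yq03} followed by the consistency bounds of Lemma \ref{liao2}---matches the paper's, and your final substitution step is fine. The gap is in the stability estimate itself, at exactly the point you flag as the ``main obstacle.'' After expanding in eigenmodes and convolving the scalar recursion with $\{P_{m-n}\}$, Lemma \ref{liao1}(i) does give $\sum_{n=1}^m P_{m-n}D_N^{\alpha_0}\rho_i^n=\rho_i^m$, but what remains is
\[
\rho_i^m+\lambda_i\sum_{n=1}^m P_{m-n}\rho_i^n
=-\lambda_i\sum_{n=1}^m P_{m-n}\sum_{j=1}^n w_j\rho_i^{n-j}
+\sum_{n=1}^m P_{m-n}R_i^n .
\]
The kernel $\{P_{m-n}\}$ inverts $D_N^{\alpha_0}$ alone and knows nothing about $\lambda_i$: both the zeroth-order term on the left and the memory term on the right carry a raw factor $\lambda_i$ with no compensating $\lambda_i^{-1}$. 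The left-hand sum $\lambda_i\sum_{n}P_{m-n}\rho_i^n$ is sign-indefinite, so it can be neither dropped nor used to absorb $\lambda_i\sum_{n}P_{m-n}\sum_j|w_j|\,|\rho_i^{n-j}|$, and a discrete Gronwall applied at this stage yields a constant growing with $\lambda_i$, destroying the claimed $\lambda_i$-uniformity; the summability $\sum_j|w_j|\le Q$ is of no help because $Q$ is not small, and Lemma \ref{liao1}(ii) controls $P$-weighted sums of singular time factors such as $t_j^{-\alpha_0}$, not $\lambda_i$-scaled memory. To make your modewise route work you would need the discrete resolvent kernel of $D_N^{\alpha_0}+\lambda_i$ together with its positivity and a $\lambda$-uniform $\ell^1$ bound---the discrete analogue of $\lambda\,\|t^{\alpha_0-1}E_{\alpha_0,\alpha_0}(-\lambda t^{\alpha_0})\|_{L^1(0,T)}\le 1$ used in Theorem \ref{thm:ode}---none of which is supplied by Lemma \ref{liao1} or by your sketch, and whose derivation in the presence of the sign-indefinite, non-monotone kernel $\{w_j\}$ is a nontrivial task in its own right.

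The paper avoids the modewise question entirely by an energy argument in $L^2(\Omega)$: testing \eqref{yq03} with $\rho^n$ and following \cite{QiuZhe} yields \eqref{ww012}; introducing $\|\rho^n\|_A^2=\|\rho^n\|^2+\sum_{j=1}^n P_{n-j}\|\nabla\rho^j\|^2$, bounding $|w_n|\le Q\mu_{\varepsilon,n}$, and invoking the weakly singular discrete Gronwall inequality of \cite[Lemma 6]{Chen} then give \eqref{thm4.2:e3}. Note that the resulting stability bound is weaker than the clean estimate $\|\rho^m\|\le Q\sum_{n=1}^m P_{m-n}\|R^n\|$ you assert: it contains the additional term $Q\sum_{n=0}^{n_0-1}\mu_{\varepsilon,n_0-n}\sum_{j=1}^{n}P_{n-j}\|R^j\|$, which is harmless only because of \eqref{w}. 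Your concluding paragraph (Lemma \ref{liao2} plus taking the maximum over $m$) is correct once a valid stability estimate of either form is in hand, but as written the heart of your argument does not close.
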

\begin{proof}
Based on \eqref{yq03},  we follow the proof of \cite[Theorem 1]{QiuZhe} to obtain that for $1 \le m \le N$
 \begin{align}
    \|\rho^m\|^2 + \sum_{n=1}^{m} P_{m-n} \|\nabla \rho^n\|^2  & \leq  2 \sum_{n=1}^{m} P_{m-n} \|R^n\| \|\rho^n\| \nonumber \\
    & \quad + Q \sum\limits_{n=0}^{m-1}  |w_{m-n}| \sum_{j = 1}^{n} P_{n-j}  \|\nabla \rho^{j}\|^2.
        \label{ww012}
\end{align}
We denote
\begin{align*}
   \|\rho^n \|_A :=  \sqrt{\|\rho^n\|^2 + \sum_{j = 1}^{n} P_{n-j}  \|\nabla \rho^{j}\|^2}, \quad 1\leq n \leq N.
\end{align*}
We incorporate this to reformulate \eqref{ww012} as follows
 \begin{align*}
    \|\rho^m \|_A^2  & \leq  2 \sum_{n=1}^{m} P_{m-n} \|R^n\| \|\rho^n \|  + Q \sum\limits_{n=0}^{m-1} | w_{m-n}| \|\rho^n \|_A^2 \\
     & \leq  2 \sum_{n=1}^{m} P_{m-n} \|R^n\| \|\rho^n \|  + Q \sum\limits_{n=0}^{m-1} \mu_{\varepsilon,m-n} \|\rho^n \|_A^2,
\end{align*}
where we used the fact that $|w_n|\leq Q\int_{t_{n-1}}^{t_n}s^{-\varepsilon}ds \leq Q \mu_{\varepsilon,n}$ with $\mu_{\varepsilon,n}:=\frac{t_{n}^{1-\varepsilon}-t_{n-1}^{1-\varepsilon}}{1-\varepsilon}$ for $0 < \varepsilon \ll 1$ and $1 \le n \le N$.
We then invoke \cite[Lemma 6]{Chen} to obtain
 \begin{align}
    \|\rho^{m} \|_A^2  & \leq  2 \sum_{n=1}^{m} P_{m-n} \|R^n\|\|\rho^n \|    + Q(T) \sum\limits_{n=0}^{m-1}  \mu_{\varepsilon,m-n}  \sum_{j=1}^{n} P_{n-j} \|\rho^j \| \|R^j\| \nonumber \\
    &  \leq 2 \sum_{n=1}^{m} P_{m-n} \|R^n\|\|\rho^n \|_A    + Q(T) \sum\limits_{n=0}^{m-1}  \mu_{\varepsilon,m-n}  \sum_{j=1}^{n} P_{n-j} \|\rho^j \|_A \|R^j\|.
    \label{ww015}
\end{align}
Choose $n_0$ such that $\|\rho^{n_0} \|_A = \max\limits_{1\leq n \leq m} \|\rho^n \|_A$. Thus \eqref{ww015} gives

\begin{equation}\label{thm4.2:e3}\begin{array}{ll}
\hspace{-0.1in} \ds \|\rho^{n_0}\|  \ds \le    Q\sum_{n=1}^{n_0} P_{n_0-n} \|R^n\|  +  Q  \sum_{n=0}^{n_0-1}   \mu_{\varepsilon,n_0-n} \sum\limits_{j=1}^{n} P_{n-j}  \|R^j\|.
      \end{array}
\end{equation}
We invoke Lemma \ref{liao2} and \begin{equation}\label{w}
\sum_{n=1}^N  \mu_{\varepsilon,n} = \sum_{n=1}^N  \frac{t_{n}^{1-\varepsilon}-t_{n-1}^{1-\varepsilon}}{1-\varepsilon} = \frac{t_N^{1-\varepsilon}}{1-\varepsilon} \leq Q
\end{equation}
to reformulate \eqref{thm4.2:e3} as
 $\|\rho^{m}\| \le \|\rho^{n_0}\|  \ds \le Q \ell_{\tau} \tau$. We thus complete the proof.
\end{proof}
\subsection{Fully discrete scheme}
We establish and analyze the fully discrete Galerkin scheme for \eqref{qwl02}. Define a quasi-uniform partition of $\Omega$ with the mesh diameter $h$. Let $S_h$ be the space of continuous piecewise linear functions on $\Omega$ with respect to the partition. Denote the Ritz projection $I_h: H^1_0(\Omega)\rightarrow S_h$ by $
     (\nabla (I_h \omega - \omega), \nabla \hat{\chi} ) = 0 $ for $\hat{\chi} \in S_h $
with the approximation property
\begin{align}
   \left\|  \omega - I_h \omega \right\|_{L^2(\Omega)}\leq Q h^2  \| \omega  \|_{H^2(\Omega)}.\vspace{-0.1in} \label{yq06}
\end{align}
    We integrate   \eqref{qwl010} multiplied by $\hat \chi \in H_0^1(\Omega)$ on $\Omega$ to arrive at the weak formulation for $\hat{\chi} \in H_0^1(\Omega)$ and $n = 1, 2, \cdots, N$
    \begin{align}
   \left(D_N^{\alpha_0 } u^n, \hat{\chi} \right) + \left(\nabla u^n, \nabla\hat{\chi} \right)  =  -\sum_{j=1}^{n} w_{j} \left(\nabla u^{n-j}, \nabla\hat{\chi} \right)  +  \left({F}^{n}, \hat{\chi} \right)  +  \left(R^{n}, \hat{\chi} \right). \vspace{-0.1in}\label{FEM:e1}
\end{align}
    We drop the local truncation errors to obtain the fully discrete Galerkin scheme  for \eqref{qwl02}: find  $U^n_h \in S_h$ with $U^0_h : = I_h u_0$ such that for $\hat{\chi} \in S_h$ and $n = 1, 2, \cdots, N$
\begin{align}
   \left(D_N^{\alpha_0 } U^n_h, \hat{\chi} \right) + \left(\nabla U_h^{n}, \nabla\hat{\chi} \right)  =  -\sum_{j=1}^{n} w_{j} \left(\nabla U_h^{n-j}, \nabla\hat{\chi} \right)  +  \left({F}^{n}, \hat{\chi} \right).   \label{lkx01}\vspace{-0.1in}
\end{align}
For the convenience of analysis, we split the error into $u(t_n)- U_h^n =\zeta ^n - \eta ^n$
 with $\zeta^n = I_h u(t_n) - U_h^n \in S_h$ and $\eta ^n = I_h u(t_n) - u(t_n)$ bounded in \eqref{yq06}.
We subtract \eqref{lkx01} from \eqref{FEM:e1} and set $\hat \chi = \zeta^n$ to obtain the following error equation for $n = 1, 2, \cdots, N$
\begin{align}
   \left(D_N^{\alpha_0 }\zeta^n, \zeta^{n} \right) + \left(\nabla \zeta^{n}, \nabla\zeta^{n}\right)  =  -\sum_{j=1}^{n} w_j \left(\nabla \zeta ^{n-j}, \nabla \zeta^{n} \right)  +  \left(R^{n}+D_N^{\alpha_0}\eta^n, \zeta^{n} \right).   \label{lkx03}\vspace{-0.2in}
\end{align}
We then prove the stability result and error estimate for the fully discrete scheme.

\begin{theorem}\label{thm5.4}  Let $U_h^n$ be the solution of the fully discrete scheme \eqref{lkx01}. Assume that $c\in L^2(L^2)$, $q \in H^1(L^2)$, $u_d \in L^2(\check H^2) \cap H^1(L^2)$, and $q(\bm x, 0), u_d(\bm x, T)\in \check H^{2-2\gamma}$  with $\gamma$ defined in \eqref{theta}, then the following stability result holds
    \begin{align}
         \|U_h^N\|  \le Q\big( \sup\limits_{0\leq t\leq T}\|q(\cdot,t)\| +\sup\limits_{0\leq t\leq T}\|c(\cdot,t)\| \big). \label{wl01}
    \end{align}
In addition, the following error estimate holds
   \begin{equation}\label{w102}
      \|U_h-u\|_{\hat L^\infty(L^2)}\leq Q \big(  {\ell_{\tau}} \tau +h^2 \big).
   \end{equation}
\end{theorem}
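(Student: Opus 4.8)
The plan is to establish \eqref{wl01} by repeating the discrete-convolution energy argument already used for the time-discrete scheme in Theorem \ref{thm4.1}, now applied to the fully discrete scheme \eqref{lkx01}: I would test \eqref{lkx01} with $U_h^n$, invoke the discrete fractional coercivity together with the kernel properties in Lemma \ref{liao1}, weight by $P_{m-n}$ and sum, and close the resulting inequality by the generalized Gr\"onwall lemma of \cite{Chen}. The only input needed is a uniform bound on the forcing term, which follows from $\|F^n\|\le\int_0^{t_n}\beta_{1-\alpha_0}(t_n-s)\|(q+c)(\cdot,s)\|\,ds\le Q(\sup_{0\le t\le T}\|q(\cdot,t)\|+\sup_{0\le t\le T}\|c(\cdot,t)\|)$; since this is entirely parallel to Theorem \ref{thm4.1}, I would state it and refer to that proof.

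For the error estimate \eqref{w102} I would use the splitting $u(t_n)-U_h^n=\zeta^n-\eta^n$ introduced before \eqref{lkx03}. The projection part $\eta^n=I_hu(t_n)-u(t_n)$ is controlled by the approximation property \eqref{yq06}, $\|\eta^n\|\le Qh^2\|u(t_n)\|_{H^2}$; since $u\in W^{1,1}(\check H^2)\hookrightarrow C([0,T];\check H^2)$ by Theorem \ref{thm:OptContrl}, we have $\sup_n\|u(t_n)\|_{H^2}\le Q$ and hence $\|\eta\|_{\hat L^\infty(L^2)}\le Qh^2$. It then remains to bound $\zeta^n$, which solves the error equation \eqref{lkx03} with forcing $R^n+D_N^{\alpha_0}\eta^n$. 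I would follow verbatim the energy computation leading to \eqref{ww012} in the proof of Theorem \ref{thm4.2}, with $R^n$ replaced by $R^n+D_N^{\alpha_0}\eta^n$. After introducing the norm $\|\cdot\|_A$ and applying the Gr\"onwall lemma, this yields, with $n_0$ maximizing $\|\zeta^n\|_A$,
\begin{equation*}
\|\zeta^{n_0}\|\le Q\sum_{n=1}^{n_0}P_{n_0-n}\|R^n+D_N^{\alpha_0}\eta^n\|+Q\sum_{n=0}^{n_0-1}\mu_{\varepsilon,n_0-n}\sum_{j=1}^n P_{n-j}\|R^j+D_N^{\alpha_0}\eta^j\|.
\end{equation*}
By Lemma \ref{liao2} the contribution of $R^n$ is $Q\ell_\tau\tau$, and the outer factor $\sum_n\mu_{\varepsilon,n}\le Q$ from \eqref{w} keeps the double sum of the same order, so the whole estimate reduces to controlling $\sum_{n=1}^m P_{m-n}\|D_N^{\alpha_0}\eta^n\|$.

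The main obstacle is precisely this consistency term: the naive bound $\|D_N^{\alpha_0}\eta^n\|\le Qh^2\,b_0\sup_k\|u(t_k)\|_{H^2}$ produces a factor $b_0\sim\tau^{-\alpha_0}$ and would spoil the estimate. I would instead write $D_N^{\alpha_0}\eta^n=\sum_{k=1}^n b_{n-k}(\eta^k-\eta^{k-1})$ from \eqref{Num:e2} and exploit the increments: since $I_h$ commutes with $\partial_t$, the approximation property gives $\|\eta^k-\eta^{k-1}\|\le Qh^2\int_{t_{k-1}}^{t_k}\|\partial_t u(\cdot,t)\|_{\check H^2}\,dt$, where $\partial_t u\in L^1(\check H^2)$ is guaranteed by Theorem \ref{thm:OptContrl}. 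Interchanging the order of summation and invoking the exact identity $\sum_{n=k}^m P_{m-n}b_{n-k}=1$ from Lemma \ref{liao1}(i) then telescopes the kernels and gives
\begin{equation*}
\sum_{n=1}^m P_{m-n}\|D_N^{\alpha_0}\eta^n\|\le Qh^2\sum_{k=1}^m\int_{t_{k-1}}^{t_k}\|\partial_t u\|_{\check H^2}\,dt=Qh^2\|\partial_t u\|_{L^1(\check H^2)}\le Qh^2,
\end{equation*}
with no negative power of $\tau$. Combining this with the $R^n$ bound yields $\|\zeta\|_{\hat L^\infty(L^2)}\le Q(\ell_\tau\tau+h^2)$, and the triangle inequality with the $\eta$ bound gives \eqref{w102}.
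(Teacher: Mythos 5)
Your proof is correct, and its overall skeleton coincides with the paper's: stability by repeating the argument of Theorem \ref{thm4.1} with $\hat\chi=U_h^n$, the splitting $u(t_n)-U_h^n=\zeta^n-\eta^n$ with $\eta$ handled by \eqref{yq06}, and the energy/Gr\"onwall reduction (via the $\|\cdot\|_A$ norm, \cite[Lemma 6]{Chen}, Lemma \ref{liao2} and \eqref{w}) to the single consistency bound $\sum_{n=1}^m P_{m-n}\|D_N^{\alpha_0}\eta^n\|\le Qh^2$. Where you genuinely depart is in proving that bound. The paper writes $D_N^{\alpha_0}\eta^n$ in the integral form \eqref{eta}, applies $\|(I_h-I)\partial_s u\|\le Qh^2\|\partial_s u\|_{\check H^2}$, and then runs a case analysis: $n=1$ separately in \eqref{eta1}, and for $n\ge 2$ the split into $\hat\Phi_1^n+\hat\Phi_2^n$ in \eqref{etan}, with the history part controlled by swapping summations and invoking Lemma \ref{liao1} (ii). You instead keep the discrete form $D_N^{\alpha_0}\eta^n=\sum_{k=1}^n b_{n-k}(\eta^k-\eta^{k-1})$, bound the increments by $Qh^2\int_{t_{k-1}}^{t_k}\|\partial_t u\|_{\check H^2}\,dt$, and after exchanging the order of summation use the exact complementary identity $\sum_{n=k}^m P_{m-n}b_{n-k}=1$ of Lemma \ref{liao1} (i), so the kernels cancel with constant exactly one and no case distinctions are needed. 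Since $b_{n-k}=\frac{1}{\tau\Gamma(1-\alpha_0)}\int_{t_{k-1}}^{t_k}(t_n-t)^{-\alpha_0}\,dt$, the two computations estimate the same double sum; your route is shorter and sharper, using only positivity of the $b_j$ and Lemma \ref{liao1} (i), whereas the paper's version parallels its treatment of $(R_2)^n$ in Lemma \ref{liao2} and thus reuses machinery it needs anyway. Both arguments ultimately rest on $\|\partial_t u\|_{L^1(\check H^2)}\le Q$ from Theorem \ref{thm:OptContrl}, which you correctly identify, and which also supplies, via $W^{1,1}(0,T;\check H^2)\hookrightarrow C([0,T];\check H^2)$, your uniform bound on $\|\eta^n\|$ needed for the final triangle inequality.
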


\begin{proof}
We follow the proof of Theorem \ref{thm4.1} and then choose $\hat{\chi}=U_h^n$ in \eqref{lkx01} to obtain \eqref{wl01}. For  \eqref{lkx03}, we follow the proof of Theorem \ref{thm4.2} to conclude that
   \begin{equation}
    \begin{split}
        \hspace{-0.175in} \ds \|\zeta^{n_0}\|  \ds \! \le Q  {\ell_{\tau}} \tau  \! +\!     Q\sum_{n=1}^{n_0} P_{n_0-n} \left\|D_N^{\alpha_0}\eta^n\right\| \! +\!   Q  \sum_{n=0}^{n_0-1}   {\mu_{\varepsilon,n_0-n} }\sum\limits_{k=1}^{n} P_{n-k}  \left\|D_N^{\alpha_0}\eta^k\right\|\label{thm5.4:e1}
    \end{split}
 \end{equation}
 with $\|\zeta^{n_0}\| = \max_{1 \le n \le N} \|\zeta^{n}\|$. To reformulate right-hand side terms of \eqref{thm5.4:e1}, we aim to prove that
 $ \sum_{n=1}^m P_{m-n} \|D_N^{\alpha_0 }\eta^n\| \le Q(\alpha_0,T)  h^2$ for $ 1 \le m \le N$. Then \eqref{Num:e2} gives
\begin{align}
    D_N^{\alpha_0 }\eta^n = \frac{1}{\tau\Gamma(1-\alpha_0)} \sum\limits_{k=1}^{n}\int_{t_{k-1}}^{t_k} \int_{t_{k-1}}^{t_k} \frac{ (I_h - I) \partial_s u}{(t_n-t)^{\alpha_0}}ds dt. \label{eta}
\end{align}
Similar to the proof of Lemma \ref{liao2}, we split the summation $\sum_{n=1}^m P_{m-n} \|D_N^{\alpha_0 }\eta^n\|$ into two parts. For $n=1$, we invoke \eqref{yq06} to bound \eqref{eta} by
\begin{equation*}
\begin{split}
 \big \| D_N^{\alpha_0 }\eta^1 \big\|  \le \frac{Q h^2}{\tau }  \int_{0}^{\tau}(\tau-t)^{-\alpha_0}dt  \int_{0}^{\tau} \|\p_s u(\cdot, s)\|_{\check H^2} ds \le Q h^2 \tau^{-\alpha_0} \|\p_t u\|_{L^1(\check H^2)},
\end{split}
\end{equation*}
which, together with Theorem \ref{thm:OptContrl} and Lemma \ref{liao1}, gives
\begin{equation} \label{eta1}\begin{split}
   P_{m-1} \big \| D_N^{\alpha_0 }\eta^1 \big\| & \le Q\tau^{\alpha_0}  h^2 \tau^{-\alpha_0} \|\p_t u\|_{L^1(\check H^2)} \le Qh^2.
\end{split}
\end{equation}
In addition, we bound $D_N^{\alpha_0 }\eta^n$ in \eqref{eta} for $2 \le n  \le N$ as follows
\begin{equation} \label{etan}\begin{array}{ll}
\ds \big \| D_N^{\alpha_0 }\eta^n \big\| &
\ds \le \frac{Q h^2}{\tau }  \sum\limits_{k=1}^{n}\int_{t_{k-1}}^{t_k} (t_n-t)^{-\alpha_0} \int_{t_{k-1}}^{t_k}  \|\p_su(\cdot, s)\|_{\check H^2} ds dt\\[0.15in]
&\hspace{-0.1in}
\ds =  \frac{Q h^2}{\tau }   \int_{t_{n-1}}^{t_n} (t_n-t)^{-\alpha_0}  \int_{t_{n-1}}^{t_n}\|\p_su(\cdot, s)\|_{\check H^2} ds dt \\[0.15in]
&\hspace{-0.1in}\ds   + \frac{Q h^2}{\tau }  \sum\limits_{k=1}^{n-1}\int_{t_{k-1}}^{t_k} (t_n-t)^{-\alpha_0} \int_{t_{k-1}}^{t_k}  \|\p_su(\cdot, s)\|_{\check H^2} ds dt = : \hat \Phi_1^n +  \hat \Phi_2^n.
\end{array}
\end{equation}
We employ Theorem \ref{thm:OptContrl} and Lemma \ref{liao1} to bound $\hat \Phi_1^n$ in \eqref{etan} as follows
\begin{equation} \label{etan1}
\begin{split}
 \hspace{-0.1in}  \sum\limits_{n=2}^{m} P_{m-n} \hat \Phi_1^n &\le  \frac{Q h^2}{\tau }  \sum\limits_{n=2}^{m} P_{m-n} \int_{t_{n-1}}^{t_n} (t_n-t)^{-\alpha_0} dt \int_{t_{n-1}}^{t_n} \|\p_su(\cdot, s)\|_{\check H^2} ds \\
   & \leq \frac{Q h^2}{\tau } \tau^{\alpha_0} \tau^{1-\alpha_0} \int_{0}^{t_{m}}\|\p_su(\cdot, s)\|_{\check H^2}ds \leq Qh^2 \|\p_tu\|_{L^1(\check H^2)} \leq Qh^2.
\end{split}
\end{equation}
We swap the order of summation to bound $\hat \Phi_2^n $ in \eqref{etan} by
 \begin{equation*}
 \begin{split}
    \sum\limits_{n=2}^{m} P_{m-n} \hat \Phi_2^n &= \frac{Q h^2}{\tau }
    \sum\limits_{n=2}^{m} P_{m-n}
    \sum\limits_{k=1}^{n-1}\int_{t_{n-k}}^{t_{n-k+1}} t^{-\alpha_0}  dt \int_{t_{k-1}}^{t_k}  \|\p_su(\cdot, s)\|_{\check H^2} ds \\
    & = \frac{Q h^2}{\tau} \sum\limits_{k=1}^{m -1}  \Big(\int_{t_{k-1}}^{t_k}  \|\p_su(\cdot, s)\|_{\check H^2} ds \Big)  \sum\limits_{n=k+1}^{m} P_{m-n}
   \int_{t_{n-k}}^{t_{n-k+1}} t^{-\alpha_0}  dt \\
   & = \frac{Q h^2}{\tau} \sum\limits_{k=1}^{m -1}  \Big(\int_{t_{k-1}}^{t_k}  \|\p_su(\cdot, s)\|_{\check H^2} ds \Big)  \sum\limits_{j=1}^{m -k} P_{(m-k)-j}
   \int_{t_{j}}^{t_{j+1}} t^{-\alpha_0}  dt,
\end{split}
\end{equation*}
and then we use Lemma \ref{liao1} (ii) to get
 \begin{equation} \label{etan2}
 \begin{split}
    \sum\limits_{n=2}^{m} P_{m-n} \hat \Phi_2^n
   & \leq Q h^2 \sum\limits_{k=1}^{m -1}  \Big(\int_{t_{k-1}}^{t_k}  \|\p_su(\cdot, s)\|_{\check H^2} ds \Big) \Big( \sum\limits_{j=1}^{m -k} P_{(m-k)-j} t_j^{-\alpha_0} \Big) \\
   & \leq Q h^2 \sum\limits_{k=1}^{m-1}  \Big(\int_{t_{k-1}}^{t_k} \|\p_su(\cdot, s)\|_{\check H^2} ds \Big) \leq Q h^2 \|\p_t u\|_{L^1(\check H^2)} \leq Q h^2.
\end{split}
\end{equation}
We combine \eqref{eta1}, \eqref{etan1} and \eqref{etan2} to prove $ \sum_{n=1}^m P_{m-n} \|D_N^{\alpha_0 }\eta^n\| \le Q  h^2$ for $ 1 \le m \le N$, which, together with  \eqref{thm5.4:e1} and \eqref{w}, gives
 $\|\zeta^{n}\| \le \|\zeta^{n_0}\|  \ds \le Q \big(  {\ell_{\tau}} \tau +h^2 \big)$.
 We invoke this with \eqref{yq06} to prove \eqref{w102}, which completes the proof.
\end{proof}


\section{Simulation of optimal control}
Based on the numerical investigation in \S \ref{S:Num}, we apply the ``first optimize, then discretize'' approach \cite{Gun,Hinze} to approximate  and simulate the optimal control.
\subsection{Discretization scheme}
For approximation of the state equation (\ref{VtFDEs}), we adopt the scheme (\ref{lkx01}) to compute its equivalent formulation (\ref{qwl02}). The only difference is that the fractional integral in $F$ requires discretization for practical computation. To do this,  we apply the linear interpolation quadrature to approximate the integral terms on the right-hand side of (\ref{qwl02})
 $$(I_t^{1-\alpha_0} \varphi)(t_n) = \sum_{j=1}^n \int_{t_{j-1}}^{t_j} \f{(t_n-s)^{-\alpha_0} \varphi(s)ds}{\Gamma(1-\alpha_0)} = \sum_{j=1}^n\left[\hat a_{n, j} \varphi^j+\hat b_{n, j} \varphi^{j-1}\right]:= \mathcal {I}_n(\varphi),$$ where
$$\hat a_{n, j}=\int_{t_{j-1}}^{t_j} \frac{\left(t_n-s\right)^{-\alpha_0}}{\Gamma\left(1-\alpha_0\right)} \frac{s-t_{j-1}}{\tau} d s, \quad \hat b_{n, j}=\int_{t_{j-1}}^{t_j} \frac{\left(t_n-s\right)^{-\alpha_0}}{\Gamma(1-\alpha_0)} \frac{t_j-s}{\tau} d s
$$ for $n\ge 1$.
 For approximation of the adjoint equation (\ref{AdjEq}), we compute the equivalent formulation (\ref{AdjEq1})  of its  forward-in-time analogue following the scheme (\ref{lkx01}).
We summarize these results to get the fully-discrete finite element scheme for the first-order optimality condition  \eqref{VtFDEs}, \eqref{AdjEq}, \eqref{VarC} of the optimal control: find $ U=\{U^n_h\}_{n=1}^N\subset S_h$  with $U^0_h=0$, $Z = \{Z^n_h\}_{n=0}^{N-1} \subset S_h$ with $Z_h^N = 0$ and $ C=\{C^{n}_h\}_{n=0}^{N-1}$ such that for $n=1,2,\ldots,N$ and $\hat \chi\in S_h$\vspace{-0.05in}
\begin{subequations}
\begin{align}\vspace{0.3in}
\hspace{-0.1in}&\hspace{-0.1in} \ds \ds\left(D_N^{\alpha_0 } U^n_h, \hat{\chi} \right) + \left(\nabla U_h^{n}, \nabla\hat{\chi} \right)  \!=\!  -\sum_{j=1}^{n} w_{j} (\nabla U_h^{n-j}, \nabla\hat{\chi})  + (\mathcal I_n(q+C_h),  \hat{\chi}  ), \label{FEM:u}\vspace{-0.05in} \\[0.05in]
\hspace{-0.15in}&\hspace{-0.1in} \ds   \left(D_N^{\alpha_0 } \bar Z^n_h, \hat{\chi} \right) \!+\! \left(\nabla \bar Z^n_h, \nabla\hat{\chi} \right)  \!=\!  \!-\!\sum_{j=1}^{n} w_{j} (\nabla \bar Z_h^{n-j}, \nabla\hat{\chi} ) \! +  (\mathcal I_n (\bar U_h- \bar u_d), \hat \chi ), \vspace{-0.05in}\label{adjFEM}\\[0.05in]
\hspace{-0.2in}&\ds  C^{n-1}(\bm x) = \f1{\kappa}\max\bigg\{0,\frac{1}{|\Omega|}\int_\Omega Z_h^{n-1}(\bm x) d\bm x\bigg\} - \f1{\kappa}Z_h^{n-1}(\bm x). \vspace{-0.05in}\label{dVarInq:e1}
\end{align}
\end{subequations}
Here $\bar U_h^n: = U_h^{N-n}$ and $\bar Z_h^n : =  Z_h^{N-n}$ for $0 \le n \le N$ and $\bar u_d$ is defined in \eqref{AdjEq1}.

 In practical computations, we start from an initial guess of $C$ to solve  \eqref{FEM:u}, \eqref{adjFEM}, and \eqref{dVarInq:e1} in order and then update $C$ for the next round computation until the difference of two contiguous $C$ under the space-time discrete  infinity norm is less than the tolerance $ 10^{-6}$.

\subsection{Numerical experiments}
We investigate the convergence behavior of the schemes \eqref{lkx01} and \eqref{FEM:u}--\eqref{dVarInq:e1} to the state equation \eqref{VtFDEs} and the optimal control \eqref{ObjFun}--\eqref{VtFDEs}, respectively, and then compare the exact and numerical solutions to substantiate the effectiveness of the scheme \eqref{FEM:u}--\eqref{dVarInq:e1} in simulating optimal control.

Let $\Omega = (0, 1)$,  $M = 1/h$, and define the temporal errors of the state variable  and the corresponding temporal convergence rates as
 $$E^\tau_U(N,M)= \max\limits_{1\leq n \leq N} \sqrt{h\sum_{j=1}^{M-1} \left(U^{2n}_j-U^n_j\right)^2}, \quad \text{Rate}^\tau_U = \log_{2}\bigg(\frac{E_\tau(N,M)}{E_\tau(2N,M)}\bigg),$$
and  similarly define those in space by
 $$E^h_U(N,M)=   \max\limits_{1\leq n \leq N}\sqrt{h\sum_{j=1}^{M-1}\left(U_{2j}^{n}-U_j^n\right)^2}, \quad \text{Rate}^h_U = \log_{2}\bigg(\frac{E_h(N,M)}{E_h(N,2M)}\bigg),$$
  in which the order of convergence is calculated by two-mesh method, see \cite[p.~107]{Farrell}.
We similarly define temporal errors $E^\tau_{{Z}}$, and $ E^\tau_C$ as well as their corresponding temporal convergence rates $\text{Rate}^\tau_{{Z}}$ and $\text{Rate}^\tau_{C}$, and the spatial errors  $E^h_{{Z}}$,  $E^h_C $ and the corresponding convergence rates $\text{Rate}^h_{{Z}}$,  $\text{Rate}^h_C$.

\textbf{Example 1: Convergence of scheme \eqref{lkx01} to  state equation.} Let $T=1/2$, $q=1$, $c=0$, and the variable exponent $\alpha(t)=\alpha_0 - \frac{1}{6}t$.
Numerical results under $M=32$ and $N=64$ are presented in Table \ref{tb01}, which shows the first-order accuracy in time and second-order accuracy in space as proved in Theorem \ref{thm5.4}.

\begin{table}
    \center \footnotesize
    \caption{The errors and convergence rates in Example 1.} \label{tb01}\hspace{-0.1in}
    \begin{tabular}{cccccccccccc}
      \hline
  $\alpha_0$  &  $N$ & $E^\tau_U(N,M) $ & $\text{Rate}^\tau_U$ & $M$ & $E^h_U(N,M) $ &
      $\text{Rate}^h_U$   \\
     \hline
         & 128 &  $3.3834 \times 10^{-5}$   &  *       & 8   &  $8.1507\times 10^{-4}$   & *    \\
  $0.4$  & 256 &  $1.8048 \times 10^{-5}$   &  0.91    & 16  &  $2.1260\times 10^{-4}$   & 1.94  \\
         & 512 &  $9.1291 \times 10^{-6}$   &  0.98    & 32  &  $5.4287\times 10^{-5}$   & 1.97  \\
         & 1024&  $4.4025 \times 10^{-6}$   &  1.05    & 64  &  $1.3716\times 10^{-5}$   & 1.98 \\
     \hline
         & 128 &  $9.7381 \times 10^{-5}$   &  *       & 8   &  $1.0108\times 10^{-3}$   & *    \\
  $0.7$  & 256 &  $4.2862 \times 10^{-5}$   &  1.18    & 16  &  $2.6336\times 10^{-4}$   & 1.94  \\
         & 512 &  $1.8348 \times 10^{-5}$   &  1.22    & 32  &  $6.7208\times 10^{-5}$   & 1.97  \\
         & 1024&  $7.6958 \times 10^{-6}$   &  1.25    & 64  &  $1.6975\times 10^{-5}$   & 1.99 \\
     \hline
         & 128 &  $2.4991 \times 10^{-4}$   &  *       & 8   &  $1.2900\times 10^{-3}$   & *    \\
  $0.95$ & 256 &  $1.2262 \times 10^{-4}$   &  1.03    & 16  &  $3.3474\times 10^{-4}$   & 1.95  \\
         & 512 &  $6.0049 \times 10^{-5}$   &  1.03    & 32  &  $8.5225\times 10^{-5}$   & 1.97  \\
         & 1024&  $2.8964 \times 10^{-5}$   &  1.05    & 64  &  $2.1499\times 10^{-5}$   & 1.99 \\
     \hline
    \end{tabular}

\end{table}

\textbf{Example 2: Convergence of scheme \eqref{FEM:u}--\eqref{dVarInq:e1} to optimal control.} Let   $T=1$, $q=1$,  $\alpha(t)=0.1 - \frac{1}{6}t$, $\kappa=7/8$ and $u_d = 1 - 4(x - 1/2)^2$. We choose $M=32$ and $N=16$, and present the numerical results in Table \ref{tb02}, from which we observe that the errors converge as either the temporal or spatial partition grid is refined, which demonstrates the effectiveness and efficiency of the numerical scheme \eqref{FEM:u}--\eqref{dVarInq:e1} for the optimal control problem \eqref{ObjFun}--\eqref{VtFDEs}.


\begin{table}
	\center \footnotesize
	\caption{The errors and  convergence rates in Example 2.} \label{tb02}\hspace{-0.1in}
	\begin{tabular}{ccccccccccccc}
		\hline
		& $N$ & $E^\tau_U(N,M) $ & $\text{Rate}^\tau_U$ & $M$ & $E^h_U(N,M) $ & $\text{Rate}^h_U$ \\
		\hline
		& 4 &  $9.4582 \times 10^{-4}$   &  *     	& 4 &  $4.5114 \times 10^{-3}$   &  *    \\
		& 8 &  $4.9223 \times 10^{-4}$   &  0.94    & 8&  $1.2265\times 10^{-3}$   &  1.88 \\
		& 16&  $2.5887 \times 10^{-4}$   &  0.93     & 16&  $3.1926 \times 10^{-4}$   &  1.94  \\
		& 32&  $1.3668 \times 10^{-4}$   &  0.92    & 32&  $8.1424 \times 10^{-5}$   &  1.97 \\
		\hline
		& $N$ & $E^\tau_{\bar{Z}}(N,M) $ & $Rate^\tau_{Z}$ 	& $M$ & $E^h_{\bar{Z}}(N,M) $ & $Rate^h_{\bar{Z}}$  \\
		\hline
		& 4 &  $ 5.1963 \times 10^{-4}$   &  *      & 4 &  $1.4833 \times 10^{-3}$   &  *    \\
		& 8 &  $ 2.5652 \times 10^{-4}$   &  1.01    & 8&  $3.6500\times 10^{-4}$   &  2.03\\
		& 16 &  $1.2751 \times 10^{-4}$   &  1.01    & 16&  $9.0752 \times 10^{-5}$   &  2.01    \\
		& 32 &  $6.3582 \times 10^{-5}$   &  1.00   & 32&  $2.2656 \times 10^{-5}$   &  2.00  \\
		\hline
		& $N$ & $E^\tau_C(N,M) $ & $Rate^\tau_C$  	& $M$ & $E^h_C(N,M) $ & $Rate^h_C$\\
		\hline
		& 4  &  $ 5.9386 \times 10^{-4}$   &  *     & 4 &  $1.7009 \times 10^{-3}$   &  *   \\
		& 8  &  $ 2.9317 \times 10^{-4}$   &  1.02   & 8&  $4.1714 \times 10^{-4}$   &  2.03\\
		& 16 & $ 1.4573\times 10^{-4}$   &  1.01    & 16&  $1.0372 \times 10^{-4}$   &  2.01    \\
		& 32 & $7.2665 \times 10^{-5}$   &  1.00   & 32&  $2.5893 \times 10^{-5}$   &  2.00   \\
		\hline
	\end{tabular}
	
\end{table}

\begin{figure}
   \setlength{\abovecaptionskip}{0pt}
	\centering
\includegraphics[width=2.5in,height=1.8in]{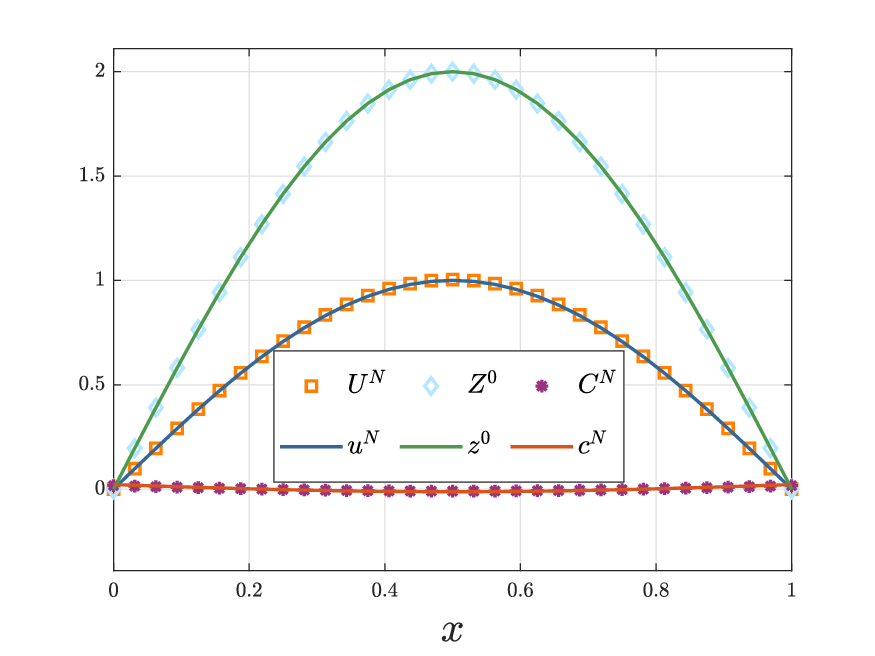}~~~~
\includegraphics[width=2.5in,height=1.8in]{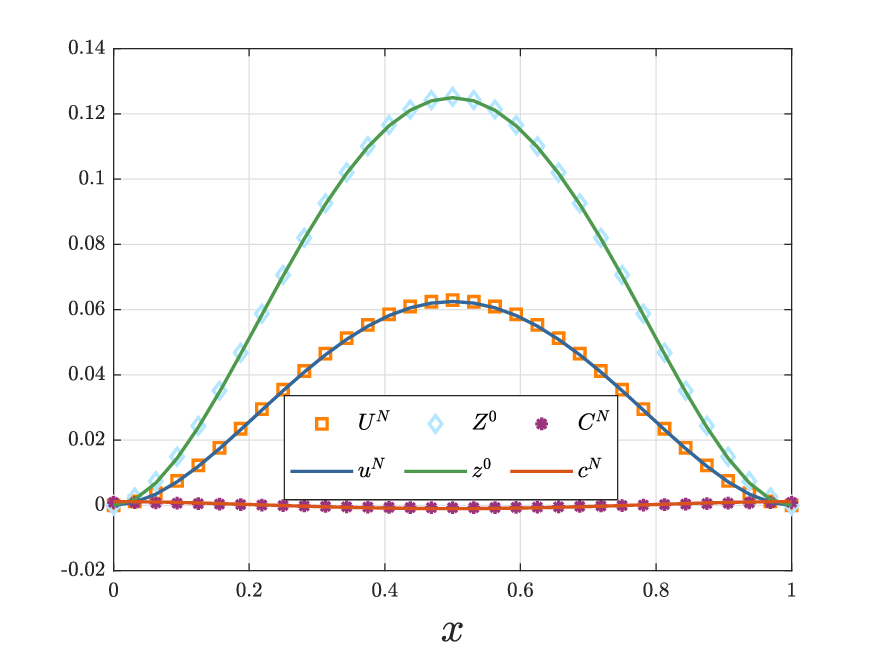}
\caption{Plots of $U^N$, $Z^0$, and $C^N$
in Example 3 for cases (a) (left) and (b) (right).}
\label{figure1}
\end{figure}

\textbf{Example 3: Comparison of exact and numerical solutions to optimal control.} Let   $T=1$, $\kappa=1$ and $\alpha(t) = 0.8 - \f{1}{6}t$. To investigate the accuracy and reliability of the numerical scheme \eqref{FEM:u}--\eqref{dVarInq:e1}, we choose exact solutions of optimal control to be
(a) $u = t^{0.8} \sin(\pi x),\,  z = 2 (1-t)^{0.8} \sin(\pi x)$ or (b)
$u = t^{0.8} x^2(1-x)^2,\,  z = 2 (1-t)^{0.8} x^2(1-x)^2.$
The control variable $c$ could then be explicitly expressed by \eqref{VarC}, and the source term  $q$ as well as $u_d$ could be accordingly determined by  \eqref{Model} and \eqref{AdjEq2}.
In Fig.~\ref{figure1}, we plot the curves of the solutions at the terminal time step by choosing $M=32$ and $N=80$. Numerical results show that the numerical solutions under the
coarse grid still provide an accurate approximation, which again demonstrates the reliability and effectiveness of the numerical scheme to the optimal control problem \eqref{ObjFun}--\eqref{VtFDEs}.

\section{Concluding remarks}
 This work perform a mathematical and numerical investigation to the optimal control of variable-exponent subdiffusion.
A potential extension is to adopt the ``first discretize, then optimize'' approach in numerical approximation of optimal control and the perform numerical analysis. However, the convolution method used in this work introduces an integral operator on the right-hand side of state and adjoint equations, which poses challenges in the derivation of the discrete optimality condition  as well as the error estimate of the optimal control. Another way to reformulate equations is the so-called perturbation method \cite{Zheng}, which performs the kernel splitting and thus preserves the right-hand side terms of the state and adjoint equations, facilitating derivations of the discrete optimality condition.
Nevertheless, such reformulation could not decouple the temporal convolution and the $-\Delta$, which causes additional difficulties in numerical analysis. We will study this interesting topic in the near future.

\section*{Declaration}

\textbf{Conflict of interest} The authors declare no competing interests.
\vspace{0.1in}

\noindent \textbf{Funding} This work was partially supported by the Postdoctoral Fellowship Program of CPSF (No. GZC20240938),  the China Postdoctoral Science Foundation (No. 2024M762459), the Natural Science Foundation of Hubei Province (No. 2025AFB109) and the Shandong Postdoctoral Science Foundation (No. SDZZ-ZR-202501339).

\vspace{0.1in}

\noindent
\textbf{Acknowledgments} Not applicable.
\vspace{0.1in}

\noindent
\textbf{Author contribution} Yiqun Li conceptualized the project, developed the analysis, and wrote the manuscript.
Mengmeng Liu prepared the figures and tables, and wrote the manuscript.
Wenlin Qiu validated the results and wrote the manuscript.
\vspace{0.1in}

\noindent
\textbf{Data Availability} The code supporting the findings of this paper will be made available upon reasonable
request.

\medskip

\end{document}